\newcommand{\mathbbm}[1]{\text{\usefont{U}{bbm}{m}{n}#1}}
\newtheorem{theorem}{Theorem}
\newtheorem{corollary}{Corollary}
\newtheorem{lemma}{Lemma}
\theoremstyle{remark}
\newtheorem{remark}{Remark}
\newtheorem{definition}{Definition}
\newcommand{\bm}{\boldsymbol}
\title{Optimal  Federated Learning for Functional Mean Estimation under Heterogeneous Privacy Constraints}
\author{Tony Cai, Abhinav Chakraborty, Lasse Vuursteen}
\date{\today}
\begin{document}
	
\maketitle

\begin{abstract}
Federated learning (FL) is a distributed machine learning technique designed to preserve data privacy and security, and it has gained significant importance due to its broad range of applications. This paper addresses the problem of optimal functional mean estimation from discretely sampled data in a federated setting.

We consider a heterogeneous framework where the number of individuals, measurements per individual, and privacy parameters vary across one or more servers, under both common and independent design settings. In the common design setting, the same design points are measured for each individual, whereas in the independent design, each individual has their own random collection of design points. Within this framework, we establish minimax upper and lower bounds for the estimation error of the underlying mean function, highlighting the nuanced differences between common and independent designs under distributed privacy constraints.

We propose algorithms that achieve the optimal trade-off between privacy and accuracy and provide optimality results that quantify the fundamental limits of private functional mean estimation across diverse distributed settings. These results characterize the cost of privacy and offer practical insights into the potential for privacy-preserving statistical analysis in federated environments.


\medskip\noindent
	\textbf{Keywords}: Differential Privacy, Distributed Learning, Federated learning, Functional Mean Estimation, Heterogeneous Data, Minimax Rates, Privacy-Accuracy Trade-off
	\end{abstract}

\section{Introduction}

In the era of big data, sensitive information is often distributed across multiple sources in fields such as healthcare and financial services. Privacy concerns prevent direct data pooling, making it essential to develop efficient statistical inference methods that preserve privacy while leveraging the collective power of distributed data. Federated learning addresses this challenge by enabling organizations or groups to collaboratively train and improve a shared global machine learning model without sharing raw data externally, ensuring that privacy is maintained at each data source. Differential privacy (DP) \cite{dwork2006calibrating} has emerged as the leading framework for providing rigorous privacy guarantees, offering a principled approach to quantifying the extent to which individual privacy is protected within a dataset.

The problem of estimating the mean of random functions based on discretely sampled data arises naturally in functional data analysis and has significant applications in areas such as signal processing, biomedical imaging, environmental monitoring, and financial modeling \cite{ramsay2005principal,ferraty2006nonparametric}. Classical methods have explored this problem extensively, with adaptive algorithms and theoretical results establishing optimal rates of convergence under varying sampling schemes \cite{cai2011meanfunction, horiguchi2023sampling, wang2016functional, rice1991estimating}. These methods emphasize a critical balance: increasing the number of subjects reduces variability across the population, while increasing the number of measurements per subject improves the precision of individual function estimates. This trade-off is fundamental to designing efficient sampling strategies.

Incorporating differential privacy into functional mean estimation introduces unique challenges. DP requires a trade-off between privacy and accuracy by adding noise to the data or the estimation process. For functional data, this trade-off is further complicated by the relationship between sampling intensity and privacy risk -- taking more measurements per individual increases privacy risks, even though it enhances estimation accuracy. Developing DP-compliant estimators requires careful consideration of both privacy constraints and the intrinsic properties of functional data, diverging significantly from traditional non-private methods.

The federated learning paradigm introduces additional complexities to functional mean estimation under privacy constraints. In federated settings, data is distributed across multiple servers or entities, such as hospitals, mobile devices, or autonomous vehicles, each with distinct privacy requirements, see e.g. \cite{beaufays2019federated} and references therein for an overview. These settings often involve heterogeneous data characterized by varying numbers of individuals, measurements, and privacy budgets across servers. This heterogeneity exacerbates the challenges of balancing privacy and statistical accuracy.

This paper investigates the cost of privacy in functional mean estimation within the framework of Federated Differential Privacy (FDP). FDP extends DP principles to distributed environments, enabling privacy-preserving statistical analysis at various levels, including individual data holders, servers, or aggregated data.  Private functional data analysis, especially when the private outputs are functions, has received recent  attention, primarily within the central DP framework \cite{hall2013differential,mirshani2019formal}. 

Our work makes several key contributions. First, we establish minimax upper and lower bounds for the estimation error of the mean function under both common and independent design settings, highlighting key differences under FDP constraints. Second, we propose novel algorithms that achieve the optimal trade-off between privacy and accuracy, accommodating heterogeneity in the number of individuals, measurements, and privacy budgets across servers. Third, we extend lower bound techniques by introducing innovative data processing methods that address information bottlenecks arising from server heterogeneity. Lastly, we quantify the fundamental limits of private functional mean estimation across various distributed environments, offering practical insights for privacy-preserving statistical analysis of functional data under diverse conditions, from local to central DP frameworks.

\subsection{Problem Formulation}

We consider the problem of distributed functional mean estimation under privacy constraints in a setting where data is distributed across multiple servers. Let \(N = \sum_{s=1}^S n_s\) denote the total number of observations, with server \(s = 1, \ldots, S\) holding \(n_s\) independent observations.

Let $X(.)$ be a random function defined on $[0,1]$, and for $s=1,\ldots,S$, let $X_1^{(s)},\ldots,X^{(s)}_{n_s}$ be independent copies of $X$. The goal is to estimate the mean function $f(\cdot) = \E(X(\cdot))$ based on noisy observations from discrete  locations on these curves distributed across $S$ servers:
\[
Y_{ij}^{(s)}= X_i(\zeta^{(s)}_{ij}) + \xi_{ij}^{(s)}, i=1,\ldots n_s \text{ and } j = 1,\ldots, m_s,
\]
where $\zeta^{(s)}_{ij}$ are the sampling points and $ \xi_{ij}^{(s)} \sim N(0, \sigma^2)$ (without loss of generality we assume $\sigma =1$). The sample path
of  \(X\) is assumed to be smooth in that it belongs to $\cH^\alpha(R)$ which also implies that the mean \(f \in \cH^\alpha(R)\) (where \(\E X_i(\cdot) = f(\cdot)\)). Here, \(\cH^\alpha(R)\) denotes the Hölder ball of radius \(R\), within the class of \(\alpha\)-Hölder continuous functions.  
In the classical setting without privacy constraints and distributed data, this problem has been extensively studied in the statistical literature (e.g., \cite{ferraty2006nonparametric, cai2011meanfunction}). In this article, we focus on quantifying the cost of privacy in a distributed setup for this problem. 

 So far, we have not discussed the design of the experiment. We will consider two cases: the {common design} and the {independent design}. In the \emph{common design} case, all individuals are observed on the same locations (grid points) i.e all $s\in[S]$ and $i\in[n_s]$, $\zeta^{(s)}_{ij} = \zeta_j$ for all $j \in [m_s]$. Note that for the common design the design points are not considered to be private since it is shared among all the individuals. In the \emph{independent design} case, the grid points are random and distinct for each individual. In this case, the design points are considered to be private. This reflects real-world situations where individuals could be identified based on specific measurement time points. We make the simplifying assumption that they are i.i.d. draws from a uniform distribution on \([0,1]\). This can be relaxed to non-uniform distributions that are e.g. bounded above and below by a constant, and the estimator presented in this paper can be easily adapted to non-uniform designs using the privacy preserving technique presented in the Supplementary Material of \cite{cai2024optimal}. 

To account for privacy constraints, we adopt the general framework of distributed estimation under privacy constraints introduced in \cite{cai2024optimal}. Let \(Z^{(s)} = \{Z_i^{(s)}\}_{i=1}^{n_s}\) denote the dataset held by server \(s\), with \(Z_i^{(s)} = \{Y_{ij}^{(s)}, \zeta_{ij}^{(s)}\}_{j=1}^{m_s}\). Each server \(s\) outputs a (randomized) transcript \(T^{(s)}\) based on \(Z^{(s)}\), with the law of the transcript determined conditionally on \(Z^{(s)}\), i.e., \(\P(\cdot | Z^{(s)})\). The collection of transcripts \(T = (T^{(s)})_{s=1}^S\) must satisfy \((\bm \varepsilon, \bm \delta) = (\varepsilon_s, \delta_s)_{s=1}^S\)-federated differential privacy (FDP), defined as follows:

\begin{definition}\label{def:differential_privacy}
	The transcript $T = (T^{(s)})_{s=1}^S$ is $(\bm \varepsilon,\bm \delta)$-federated differentially private (FDP) if for all $s\in[S]$, $A \in \mathscr{T}$ and $z,z' \in \mathcal{Z}^{n_j}$ differing in one individual datum it holds that
	\begin{equation*}
		\P \left( T^{(s)} \in A | Z^{(s)} = z \right) \leq  e^{\varepsilon_s} \P \left( T^{(s)} \in A | Z^{(s)} = z' \right)  + \delta_s.
	\end{equation*}
\end{definition}

In the above definition,``differing in one datum" refers to being Hamming distance ``neighbors." Specifically, local datasets $Z^{(s)}$ and $\tilde{Z}^{(s)}$ are \emph{neighboring} if their Hamming distance is at most 1, calculated over $\mathcal{Z}^{n_s} \times \mathcal{Z}^{n_s}$. In other words, $\tilde{Z}^{(s)}$ can be derived from $Z^{(s)}$ by modifying at most one observation among $Z^{(s)}_1,\dots,Z^{(s)}_{n_s}$. In the common design setup, where the design points are shared among all individuals, neighboring datasets share the same design points, and only the measurements 
$Y$ differ..  The smaller the values of $\varepsilon_s$ and $\delta_s$, the stricter the privacy constraint. We consider $\varepsilon_s \leq C_{\varepsilon}$ for $s=1,\dots,S$, with a fixed constant $C_{\varepsilon} > 0$ that does not affect the derived rates.

The Federated Differential Privacy (FDP) framework addresses scenarios where sensitive data is distributed across multiple parties, each generating outputs while maintaining differential privacy. In this distributed protocol, each server's transcript depends solely on its local data, with no direct communication or data exchange between servers. This framework is particularly relevant in settings like multi-site studies or trials conducted on the same population, where institutions (e.g., hospitals) aim to collaborate without sharing raw data due to privacy concerns. The FDP framework generalizes commonly studied settings, including the local DP model $(n_s =1)$, where privacy mechanisms operate at the individual level, and the central DP model $(S=1)$, where data is aggregated at a single location.

Each server transmits its transcript to a central server. Using all transcripts \(T = (T^{(1)}, \ldots, T^{(S)})\), the central server computes an estimator \(\hat{f} : \mathcal{T}^S \to \mathcal{F}\). We refer to the pair \((\hat{f}, \{\P(\cdot \mid Z^{(s)})\}_{s=1}^S)\) as a distributed estimation protocol. The class of distributed estimation protocols satisfying Definition~\ref{def:differential_privacy} is denoted by \(\mathcal{M}(\bm \varepsilon, \bm \delta)\). We let $\P_f$ denote the joint law of transcripts and the $N = \sum_{s=1}^S n_s$  observations. We let $\E_f$ denote the expectation corresponding to $\P_f$.

The aim is to estimate the function $f$ based on the distributed data. The difficulty of this estimation task arises from both the distributed nature of the data and privacy constraints that limit the sharing of information between servers.
As in the conventional decision-theoretical framework, the estimation accuracy of a distributed estimator $\hat f \equiv \hat f(T)$ is measured by the integrated mean squared error (IMSE), $\E_{f} \|\hat f - f\|_2^2$, where the expectation is taken over the randomness in both the data and construction of the transcripts, and the quantity of particular interest is the \emph{minimax risk} for the distributed private protocols over function class $\cF$ which we take to be the Hölder class \(\cH^\alpha(R)\) with $\alpha>1/2$,
\begin{equation}\label{eq:def-minimax-risk}
	\inf_{\hat f \in \cM(\bm \varepsilon,\bm\delta)} \sup_{f \in \cF} \, \E_{f} \|\hat f - f\|_2^2.
\end{equation}
This `global' risk characterizes the difficulty of the federated learning problem over the function class when trying to infer the entire function underlying the data whilst adhering to the heterogeneous privacy constraints.








\subsection{Related work}

Federated Differential Privacy (FDP), as considered in this paper, applies DP at the level of local samples consisting of multiple observations. Optimal statistical inference under this framework has been studied in the context of nonparametric models for estimation \cite{cai2024optimal} and goodness-of-fit testing \cite{cai2024privateTesting}, with \cite{cai2024optimal} addressing server heterogeneity. The homogeneous setting has been explored for discrete distributions \cite{liu2020learning, pmlr-v206-acharya23a_user_level_LDP} and parametric mean estimation \cite{levy2021learning, narayanan2022tight}. Additionally, \cite{canonne2024private} examines discrete distribution testing in a two-server setting ($m=2$) with differing DP constraints.

Upon completion of this work, we came across the paper \cite{xue2024optimalestimationprivatedistributed}, which considers a setting similar to ours and claims some partially overlapping results. There are important differences between the papers, however. Foremost, both their general upper- and lower-bounds in \cite{xue2024optimalestimationprivatedistributed} exhibit a polynomial gap, whereas our upper bound provides a strict improved in certain regimes and our lower bound matches our upper bound up to polylogarithmic factors. Another important difference is that  in \cite{xue2024optimalestimationprivatedistributed}, only the random design case is considered, which means the contrast with the fixed design setup are not treated in their work. Furthermore, the methods considered in the two papers differ: the method in \cite{xue2024optimalestimationprivatedistributed} requires a sequential federated learning algorithm, whereas our method is a one-shot algorithm.

In terms of lower bound techniques, several approaches have been specifically developed for private settings. \cite{lalanne2023statistical} and \cite{acharya2021differentially} explore private adaptations of general methods, such as Fano, Assouad, and Le Cam techniques, for establishing lower bounds in the central DP setting. In contrast, \cite{barnes2020fisher} develops Van Trees-based lower bounds tailored to local DP, which do not directly extend to central DP. Similarly, the techniques in \cite{acharya2023optimal, acharya2024unified} are designed for the local DP setting and lack straightforward extensions to central DP.

Building on these foundations, \cite{cai2024optimal} extends the Van Trees-based lower bound techniques of \cite{barnes2020fisher} to the federated DP setting, addressing the complexities introduced by distributed data and federated privacy constraints. In this paper, we further advance these methodologies by applying them to the functional mean estimation problem. We introduce novel data processing techniques that account for the diverse information bottlenecks arising from user, measurement, and server heterogeneity, enabling the derivation of optimal lower bounds for heterogeneous configurations.

\subsection{Organization of the paper}

The rest of the paper is organized as follows. In Section~\ref{sec:main-results}, we present our main results, which summarize the fundamental performance limits of private functional mean estimation in the common and independent design settings. In Section~\ref{sec:methods}, we describe the methods for differentially private functional mean estimation in the common and independent design settings. In Section~\ref{sec:lower-bounds}, we present the lower bound for the common design setting. Proofs of the main results are provided in the Supplementary Material.

\subsection{Notation, definitions and assumptions}

Throughout the paper, we shall write $N := \sum_{s=1}^S m_s n_s$ and consider asymptotics in $S$, $m_s$, the $n_s$'s and the privacy budget $(\bm \epsilon, \bm \delta) := \{\varepsilon_s,\delta_s\}_{s=1}^S$. We assume that $N \to \infty$ and $\max_{s} \varepsilon_s = O(1)$. For two positive sequences $a_k$, $b_k$ we write $a_k\lesssim b_k$ if the inequality $a_k \leq Cb_k$ holds for some universal positive constant $C$. Similarly, we write $a_k\asymp b_k$ if $a_k\lesssim b_k$ and $b_k\lesssim a_k$ hold simultaneously and let $a_k\ll b_k$ denote that $a_k/b_k = o(1)$. 

We use $a\vee b$ and $a\wedge b$ for the maximum and minimum, respectively, between $a$ and $b$. For $k \in \N$, $[k]$ shall denote the set $\{1,\dots,k\}$. Throughout the paper $c$ and $C$ denote universal constants whose value can differ from line to line. The Euclidean norm of a vector $v \in \mathbb{R}^d$ is denoted by $\|v\|_2$. For a matrix $M \in \R^{d \times d}$, the norm $M \mapsto \| M \|$ is the spectral norm and $\text{Tr}(M)$ is its trace. Furthermore, we let $I_d$ denote the $d \times d$ identity matrix.


\section{Main results}\label{sec:main-results}

In most practical settings, different servers often contain differing numbers of samples in terms of individuals, measurements per individual, and privacy parameters. We consider the most general setting where each server $s = 1,\dots,S$ can have a different number of individuals $n_s$, a different number of measurements $m_s$ for each individual in the server, and varying privacy parameters $\varepsilon_s$ and $\delta_s$. In the common design setting, our theory allows for heterogeneity in all of the aforementioned characteristics except for the design points.

Our main results are summarized in Theorem \ref{thm:rate-common-design} and \ref{thm:rate-independent-design} below, which capture the fundamental performance limits of private functional mean estimation in the common and independent design settings, respectively.

\subsection{Optimal performance when the design is common}\label{ssec:optimal-performance-common-design}

In this section, we consider the case of common design, where \( m_s = m \) for all \( s = 1, \dots, S \). To maintain simplicity of the theoretical statement here, we will assume the design to be a equispaced for the main result of this section; $\zeta_{j} = j/m$. We note that our upper bounds for the common design setting hold for more general design point setups (see Section \ref{ssec:common-design-upper-bound}).

Theorem \ref{thm:rate-common-design} below describes the minimax risk for differentially private estimators in the common design setting. The rate-determining quantity in this setting is governed by the parameter \( D_* \geq 1 \), which is the solution to the following equation:  
\begin{equation}\label{eq:rate-determining-relation:common-design}
	D^{2\alpha} =  m^{2\alpha} \bigwedge \sum_{s=1}^S \min \left\{ n_s, D^{-1} n_s^2 \varepsilon_s^2  \right\}.
\end{equation} 
The existence and uniqueness of the solution \( D_* > 0 \) to equation \eqref{eq:rate-determining-relation:common-design} can be established under standard assumptions on the parameters \( m, n_s, \varepsilon_s \), and \( \alpha \). Intuitively, \( D_* \) represents the balance between the design complexity, captured by \( m^{2\alpha} \), and the effective sample size across all samples. The left-hand side of the equation, \( D^{2\alpha} \), increases monotonically with \( D \), while the right-hand side combines terms that are bounded and decrease monotonically with \( D_* \), guaranteeing a unique solution.

\begin{theorem}\label{thm:rate-common-design}
	Suppose that for $\delta ' = \min_s \delta_s$, it holds that $\delta' \log(1/\delta') \lesssim \left(\frac{n_s}{m_s} \wedge \sqrt{\frac{n_s}{m_s}}\right)\varepsilon_s^2/N$. Let $D_*$ solve~\eqref{eq:rate-determining-relation:common-design} then,
	\begin{equation*}
		\inf_{\hat f \in \cM_{\varepsilon,\delta}} \sup_{f \in \cH^\alpha(R)} \E_f \|\hat f - f\|_2^2 \asymp \eta_N D^{-2\alpha}_* .
	\end{equation*}
	where $\eta_{N}$ is a polylogarithmic factor in $N = \sum_{s=1}^S n_s$ and $1/ \delta'$. 
\end{theorem}

It is instructive to consider two specific limiting cases to better understand the implications of this result.  
First, in the \textit{non-private case}, where the privacy constraints are removed by setting \( \varepsilon_s \to \infty \) for all \( s \in [S] \), the rate simplifies to the known minimax rates established in \cite{cai2011meanfunction}. This scenario corresponds to the classical non-private setting where privacy considerations do not impose any restrictions. The effective rate is determined solely by the design complexity and sample sizes, without the additional penalties introduced by privacy constraints.
Second, in the \textit{large-sample limit}, where \( n_s \to \infty \) for all \( s \in [S] \), the rate reduces to \( m^{-2\alpha} \). Here, the noise becomes negligible due to the abundance of samples, and the discretization error from the common design dominates. This reflects a regime where the design complexity \( m \) dictates the achievable performance, independent of the privacy constraints or noise levels.
In order to gain more fine grained insight it would be helpful to look at the homogeneous setting which we do section \ref{ssec:comparison-common-and-independent-design}. 

\subsection{Optimal performance when the design is independent}\label{ssec:optimal-performance-independent-design}

The convergence rate in the independent design setting is, just as in the common design setting, given by the solution to an equation. Let $D_* \geq 1$ be the largest number such that 
\begin{equation}\label{eq:rate-determining-relation:independent-design}
	D^{2\alpha}_* = \inf_{1 \leq D \leq D_*} \sum_{s=1}^S \min \left\{ D^{-1} n_s m_s, D^{-2}m_s n_s^2 \varepsilon_s^2, D^{2\alpha} n_s , D^{2\alpha - 1} n_s^2 \varepsilon_s^2  \right\}.  
\end{equation}
The rate of convergence for differentially private estimators in the independent design setting is given by the following theorem.

\begin{theorem}\label{thm:rate-independent-design}
	Suppose that for $\delta ' = \min_s \delta_s$, it holds that $\delta' \log(1/\delta') \lesssim \left(\frac{n_s}{m_s} \wedge \sqrt{\frac{n_s}{m_s}}\right)\varepsilon_s^2/N$. 
	Let $D_*$ solve \eqref{eq:rate-determining-relation:independent-design}. 
	
	Then,
	\begin{equation*}
		\inf_{\hat f \in \cM_{\varepsilon,\delta}} \sup_{f \in \cH^\alpha(R)} \E_f \|\hat f - f\|_2^2 \asymp \eta_N D^{-2\alpha}_*.
	\end{equation*}
	where $\eta_{N}$ is a polylogarithmic factor in $N = \sum_{s=1}^S n_s$ and $1/\delta'$. 
\end{theorem}

The optimization problem in \eqref{eq:rate-determining-relation:independent-design} can be interpreted 
minimum the four terms, which correspond to the sampling error, the measurement error, and their respective private counterparts. Our upper bounds rely on solving the optimization problem \eqref{eq:rate-determining-relation:independent-design}, which determines, roughly speaking, the effective dimension of the problem. The effective dimension \( D_* \) balances the trade-offs between design complexity, sample sizes, noise levels, and the smoothness of the underlying function class.

In the \textit{non-private case}, where the privacy constraints are removed by setting  \( \varepsilon_s \to \infty \) for all \( s \in [S] \), the rate simplifies to the classical non-private minimax rate. Notably, minimax optimal rates in the heterogeneous setup, where design points differ across samples, were not well understood prior to this work. The optimization problem in this case reveals how differences in sample sizes and design complexities across datasets interact with the smoothness of the underlying function to determine the rate.

\subsection{Comparison of the common and independent design settings}\label{ssec:comparison-common-and-independent-design}

In this section, we compare the minimax rates of convergence for the common and independent design settings under the homogeneous case, where \( n_s = n \), \( m_s = m \), \( \varepsilon_s = \varepsilon \), and \( \delta_s = \delta \). While in practice, settings are more often heterogeneous than not, the homogenous case allows for a more straightforward formulation minimax rate than the implicit ones of Theorems \ref{thm:rate-common-design} and \ref{thm:rate-independent-design}. In this simplified formulation, the rates between the independent and common designs are more easily compared. The comparison highlights the fundamental differences in statistical efficiency and the impact of differential privacy constraints across these two design settings.

We begin by stating the minimax risks for the homogeneous setup under the two design settings, starting with the independent design setting.

\begin{corollary}\label{cor:rate-independent-design}
	Suppose that it holds that $\delta \log(1/\delta) \lesssim \left(\frac{n}{m} \wedge \sqrt{\frac{n}{m}}\right)\varepsilon^2/N$. 

	It holds that
\begin{equation}\label{eq:rate-independent-design}
	\inf_{\hat f \in \cM_{\varepsilon,\delta}} \sup_{f \in \cH^\alpha(R)} \E_f \|\hat f - f\|_2^2 \asymp \eta_{N} \left( (Sn)^{-1} + (Smn)^{-\frac{2\alpha}{2\alpha + 1}} + (Smn^2 \varepsilon^2)^{-\frac{2\alpha}{2\alpha + 2}} +(Sn^2\varepsilon^2)^{-1} \right),
\end{equation}
where $\eta_{N}$ is a polylogarithmic factor in $N = S n$ and $1/\delta$.
\end{corollary} 

For the common design setting, the minimax rate of convergence in the homogeneous setup is given by the following corollary.

\begin{corollary}\label{cor:rate-common-design}
	Suppose that it holds that $\delta \log(1/\delta) \lesssim \left(\frac{n}{m} \wedge \sqrt{\frac{n}{m}}\right)\varepsilon^2/N$. 

	It holds that
	\begin{equation}\label{eq:rate-common-design}
		\inf_{\hat f \in \cM_{\varepsilon,\delta}} \sup_{f \in \cH^\alpha(R)} \E_f \|\hat f - f\|_2^2 \asymp \eta_{N}\left( (Sn)^{-1}  + m^{-2\alpha} + (Sn^2 \varepsilon^2)^{-\frac{2\alpha}{2\alpha + 1}} \right),
	\end{equation}
	where $\eta_{N}$ is a polylogarithmic factor in $N = S n$ and $1/\delta$.
\end{corollary}

In the common design case, the rate of convergence in \eqref{eq:rate-common-design} is dominated by the privacy-related term \( (Sn^2\varepsilon^2)^{-\frac{2\alpha}{2\alpha +1 }} \) when \( \varepsilon \lesssim \min\left( S^{-1/2}n^{-1/2} m^{\alpha +1/2}, S^{1/4\alpha}n^{(1-2\alpha)/4\alpha}\right) \). In contrast, under the independent design setting, the rate is given by the combination \( (Smn^2 \varepsilon^2)^{-\frac{2\alpha}{2\alpha + 2}} + (Sn^2\varepsilon^2)^{-1} \) when privacy constraints dominate. 

If the rate is dominated by the second term $(Sn^2\varepsilon^2)^{-1}$, it is strictly faster than the common design privacy rate. If the first term $(Smn^2 \varepsilon^2)^{-\frac{2\alpha}{2\alpha + 2}} $ dominates, it is faster than the common design rate whenever
$$
(Smn^2 \varepsilon^2)^{-\frac{2\alpha}{2\alpha + 2}} \lesssim (Sn^2 \varepsilon^2)^{-\frac{2\alpha}{2\alpha + 1}} \quad \text{iff} \quad \varepsilon\lesssim  S^{-1/2}n^{-1/2} m^{\alpha +1/2}. 
$$
Thus, whenever the ``privacy-related" terms dominate, an independent design achieves a faster rate of convergence, meaning the cost of privacy is consistently lower in this setting.

What is particularly noteworthy is that under the independent design, when privacy constraints are binding, there are benefits to having both more individuals and more measurements per individual. In contrast, under the common design, increasing the number of measurements per individual offers no benefit when the privacy constraint is binding. This can be interpreted as a privacy-related penalty for the common design. This might seem counterintuitive at first, since in the common design setting, the design points do not need to be private, whereas in the independent design setting, they do. The key insight here is about information leakage: In the common design where all individuals share the same measurement points, adding more measurements doesn't help because the privacy cost is already maximized - everyone's data is perfectly correlated through the shared design points. In contrast, with independent designs, while more measurements do leak more information about each individual (requiring more noise for privacy), this is offset by the statistical benefits of having more diverse measurement points across the population.

\section{Methods for Differentially Private Functional Mean Estimation}\label{sec:methods}

We now describe the methods for differentially private functional mean estimation in the independent and common design settings. As revealed by Theorems \ref{thm:rate-common-design} and \ref{thm:rate-independent-design}, these settings exhibit distinct principal phenomena and require different approaches. We present an optimal algorithm for each setting, both of which provably achieve the minimax rate of convergence. The theorems accompanying the algorithms in this section provide the upper bounds for the respective theorems mentioned earlier, starting with the independent design case.

\subsection{An optimal algorithm for the independent design setting}\label{ssec:algorithm-independent-design}

In the independent design setting, we consider a scenario where all design points are distinct and treated as private information. The challenge in this setup, compared to e.g. private nonparametric regression as studied in \cite{cai2024optimal}, stems from the fact that the model itself is effectively subject to three different sources of noise: the per-subject $X_i^{(s)}$-function noise, the noise stemming from the randomness in the design points, and the measurement noise. An optimal privacy mechanism is calibrated to account for all three sources of noise, ensuring that the privacy budget for each server is allocated efficiently across the different components. This means that the procedure must differ from server to server, depending on the number of individuals, measurements per individual, and privacy parameters.

Our approach is based on a private, truncated projection estimator, where we project onto compactly supported orthonormal basis functions. This approach allows effective privatization of both the design points and the outcome values. As basis functions, we use $A>\alpha$-regular wavelets, which form an orthonormal basis of $L_2[0,1]$. Wavelet bases allow characterization of H\"older spaces, with $\alpha$ capturing the decay of wavelet coefficients. For further details, see e.g. \cite{hardle2012wavelets}. 

For any $A \in\mathbb{N}$ one can follow Daubechies' construction of the father $\phi(\cdot)$ and mother $\psi(\cdot)$ wavelets with $A$ vanishing moments and bounded support on $[0,2A-1]$ and $[-A+1,A]$, respectively, for which we refer to \cite{daubechies1992ten}. The basis functions are then obtained as
\begin{align*}
	\big\{ \phi_{l_0+1,m},\psi_{lk}:\, m\in\{0,...,2^{l_0+1}-1\},\quad l \geq l_0 +1,\quad k\in\{0,...,2^{l}-1\} \big\},
\end{align*}
with $\psi_{lk}(x)=2^{l/2}\psi(2^lx-k)$, for $k\in [A-1,2^l-A]$, and $\phi_{l_0+1, k}(x)=2^{l_0+1}\phi(2^{l_0 + 1}x-m)$, for $m\in [0,2^{l_0 + 1}-2A]$, while for other values of $k$ and $m$, the functions are specially constructed, to form a basis with the required smoothness property. A function \(f \in L_2[0,1]\) can then be expressed as
\[
f = \sum_{l=l_0}^\infty \sum_{k=0}^{2^l-1} f_{lk} \psi_{lk},
\]
where $f_{lk} = \int f \psi_{lk}$. A well known estimator for the wavelet coefficients \(f_{lk}\) -- in the absence of privacy constraints -- is given by
\[
\widehat{f}_{lk}^{(s)} = \frac{1}{n_s} \sum_{i=1}^{n_s} \frac{1}{m_s} \sum_{j=1}^{m_s} Y_{ij}^{(s)} \psi_{lk}\left(\zeta_{ij}^{(s)}\right),
\]
where \(Y_{ij}^{(s)}\) are the observations and \(\zeta_{ij}^{(s)}\) are the corresponding design points. 

For notational simplicity, let us define the contribution of a single individual \(i \in[n_s]\), on server $s\in[S]$ to the estimator as:
\[
U_{i,lk}^{(s)} = \frac{1}{m_s} \sum_{j=1}^{m_s} Y_{ij}^{(s)} \psi_{lk}(\zeta_{ij}^{(s)}).
\]
Thus, the standard (unclipped) estimator can be rewritten as:
\(
\widehat{f}_{lk}^{(s)} = \frac{1}{n_s} \sum_{i=1}^{n_s} U_{i,lk}^{(s)}.
\)
In order to make an informed decision on how to clip requires sharp concentration bounds on \(U_{i,lk}^{(s)}\). The following lemma establishes the concentration of \(U_{i,lk}^{(s)}\) for a fixed \(l\) and \(k\), using Bernstein's inequality.

\begin{lemma}\label{lemma:concentration-of-U}
	Fix $c>0$, \(l\) and \(k\), the sequence \(\{U_{i,lk}^{(s)}\}_{i \in [n_s]}\) satisfies:
	\[
	\P\left(\forall i, \, |U_{i,lk}^{(s)}| < \tau_l^{(s)}\right) \geq 1 - \gamma,
	\]
	with the threshold \(\tau_l^{(s)}\) given by
	\begin{equation}\label{eq:def-clipping}
	\tau_l^{(s)} = 2 (2c\log N)^{3/2}\left(\sqrt{\frac{1}{m_s}} + \frac{1}{3}\|\psi\|_\infty \frac{2^{l/2}}{m_s}\right)+ R2^{-l(\alpha+1/2)},
	\end{equation}
	and \(\gamma = N^{-c}\), where the constant $c > 0$  can be arbitrary.
\end{lemma}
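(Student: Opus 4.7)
The plan is to decompose $U_{i,lk}^{(s)}$ into a signal part and a noise part, apply Bernstein-type concentration to each, combine with the wavelet characterization of the H\"older class to handle the mean, and finally union-bound over $i \in [n_s]$. Concretely, write $U_{i,lk}^{(s)} = V_{i,lk}^{(s)} + W_{i,lk}^{(s)}$ with
\[
V_{i,lk}^{(s)} = \frac{1}{m_s}\sum_{j=1}^{m_s} X_i(\zeta_{ij}^{(s)})\psi_{lk}(\zeta_{ij}^{(s)}), \qquad W_{i,lk}^{(s)} = \frac{1}{m_s}\sum_{j=1}^{m_s} \xi_{ij}^{(s)}\psi_{lk}(\zeta_{ij}^{(s)}),
\]
and set $\theta_{i,lk} := \int_0^1 X_i(x)\psi_{lk}(x)\,dx$, so that $\E[V_{i,lk}^{(s)}\mid X_i] = \theta_{i,lk}$ and $\E[W_{i,lk}^{(s)}] = 0$.

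For the signal term, I would condition on $X_i$. Since $X_i \in \mathcal{H}^\alpha(R)$ implies $\|X_i\|_\infty \lesssim R$, each summand of $V_{i,lk}^{(s)}$ is bounded by $R\|\psi\|_\infty 2^{l/2}$ with variance at most $R^2$ (using $L_2$-normalization of $\psi_{lk}$). A standard Bernstein inequality then yields a deviation of order $\sqrt{\log N/m_s} + (\log N)\cdot 2^{l/2}/m_s$ with probability at least $1 - N^{-c-2}$. The wavelet characterization of $\mathcal{H}^\alpha(R)$ furthermore gives $|\theta_{i,lk}| \leq R\cdot 2^{-l(\alpha+1/2)}$ almost surely, which is precisely the deterministic piece $R\,2^{-l(\alpha+1/2)}$ in $\tau_l^{(s)}$.

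The main obstacle is the noise term $W_{i,lk}^{(s)}$: the summands $\xi_{ij}^{(s)}\psi_{lk}(\zeta_{ij}^{(s)})$ are unbounded because $\xi$ is Gaussian, so Bernstein does not apply directly. I would handle this by intersecting with the truncation event $\mathcal{E} = \{\max_{i,j}|\xi_{ij}^{(s)}| \leq \sqrt{2c\log N}\}$, which has probability at least $1 - N^{-c'}$ by a Gaussian tail bound and a union bound over at most $N$ noise variables. On $\mathcal{E}$, each summand of $W_{i,lk}^{(s)}$ is bounded by $\sqrt{2c\log N}\,\|\psi\|_\infty 2^{l/2}$ and, by independence of $\xi_{ij}^{(s)}$ and $\zeta_{ij}^{(s)}$, has variance at most $\E[\xi^2]\E[\psi_{lk}(\zeta)^2] = 1$. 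Applying Bernstein to the truncated sum yields a deviation of order $\sqrt{\log N/m_s} + (\log N)^{3/2}\|\psi\|_\infty 2^{l/2}/m_s$; it is exactly the composition of the Bernstein $\log N$ with the Gaussian-truncation factor $\sqrt{\log N}$ that produces the $(2c\log N)^{3/2}$ exponent in $\tau_l^{(s)}$.

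Assembling the three estimates via the triangle inequality reproduces the claimed form of $\tau_l^{(s)}$ up to universal constants, which I would absorb by adjusting $c$. A final union bound over the $n_s \leq N$ individuals preserves the overall failure probability $N^{-c}$ after the same adjustment, completing the argument.
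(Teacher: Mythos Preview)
Your proposal is correct and follows essentially the same approach as the paper: decompose into signal and noise parts, control the conditional mean via the wavelet characterization of the H\"older ball, apply Bernstein to each fluctuation term, and handle the unbounded Gaussian noise by a truncation at level $\sqrt{2c\log N}$ before the final union bound over individuals. The only cosmetic difference is that the paper conditions on the noise vector $\tilde{\xi}_i^{(s)}$ and applies Bernstein over the randomness in the design points, whereas you intersect with the global truncation event and apply Bernstein to the truncated summands; both are standard and yield the same (indeed, your variance bound for the noise term is slightly sharper, which is harmless since the lemma only asserts an upper threshold).
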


A proof of the lemma is given in Section \ref{sssec:proof-of-independent-design-upper-bound} of the Supplementary Material. We define what is effectively the local clipped estimator for the $l,k$-th wavelet coefficient as
\begin{equation}
	\label{eq:fhat_lk-independent-design}
\widehat{f}_{lk}^{\tau,s} = \frac{1}{n_s} \sum_{i=1}^{n_s} \left[ U_{i,lk}^{(s)} \right]_{\tau_l^{(s)}},
\end{equation}
where \(\left[ U_{i,lk}^{(s)} \right]_{\tau_l^{(s)}}\) denotes the clipping of \(U_{i,lk}^{(s)}\) to the range \([- \tau_l^{(s)}, \tau_l^{(s)}]\). Clipping at this magnitude ensures that the estimator has bounded sensitivity, enabling it to be privatized effectively.

To achieve differential privacy, we aim to employ the Gaussian mechanism. However, adding Gaussian noise to the clipped averages of \eqref{eq:fhat_lk-independent-design} directly does not balance the amount of signal versus sensitivity present in each wavelet coefficient. Instead, we employ a modified Gaussian mechanism, akin to ideas used in anisotropic private mean estimation of e.g. \cite{dagan2024dimensionfreeprivatemeanestimation}. 

The coordinates of \(\widehat{f}_{lk}^\tau\) exhibit varying greatly varying sensitivities across the various resolution levels, proportional to \(\tau_l^{(s)}\). In order to obtain the right signal-sensitivity trade-off, we rescale each coordinate, yielding at the \(l\)-th resolution level:
\[
\tilde{f}_{lk}^{\tau,s} = (\tau_l^{(s)} \sqrt{2^l \wedge m_s})^{-1} \widehat{f}_{lk}^{\tau,s}.
\]
The \(\ell_2\) sensitivity of the vector \( \{\tilde{f}_{lk}^{\tau,s}, k = 0, \ldots, 2^l-1, l_0 \leq l \leq L\}\) is bounded as shown in Lemma \ref{lemma:L2-sensitivity-independent-design}.

\begin{lemma}\label{lemma:L2-sensitivity-independent-design}
	Let \(Z^{(j)}\) and \(\tilde{Z}^{(j)}\) be any neighboring datasets. Let $\tilde{f}^{\tau,s}(Z^{(j)}), \tilde{f}^{\tau,s}(\tilde{Z}^{(j)})$ denote the vectors of coefficient estimators \(\{\tilde{f}_{lk}^{\tau,s}, k = 0, \ldots, 2^l-1, l_0 \leq l \leq L\}\) computed on the basis of \(Z^{(j)}\) and \(\tilde{Z}^{(j)}\) respectively. Then:
	\[
	\left\|\tilde{f}^{\tau,s}(Z^{(j)}) - \tilde{f}^{\tau,s}(\tilde{Z}^{(j)})\right\|_2 \leq c_A \frac{\sqrt{L}}{n_s},
	\]
	where $c_A$ is a computable constant depending only the choice of wavelet basis.
\end{lemma}

Using the Gaussian mechanism (for a details, see Appendix A of \cite{dwork2014algorithmic}), we obtain a \((\varepsilon_s, \delta_s)\)-differentially private estimator
\[
\tilde{f}_{lk}^{P,s} = \tilde{f}_{lk}^{\tau,s} + \widetilde{W}_{lk}^{(s)}, \quad \widetilde{W}_{lk}^{(s)} \overset{i.i.d.}{\sim} \mathcal{N}\left(0, \frac{4 c_A^2 L \log(2/\delta)}{n_s^2 \varepsilon_s^2}\right).
\]

To construct the final private estimator \(\widehat{f}_{lk}^{P,s}\) in the central server, we rescale \(\tilde{f}_{lk}^{P,s}\) back to the original scale, yielding
\[
\widehat{f}_{lk}^{P,s} = \widehat{f}_{lk}^{\tau,s} + W_{lk}^{(s)}, \quad W_{lk}^{(s)} \overset{i.i.d.}{\sim} \mathcal{N}\left(0, \frac{4 c_A^2 L (2^l \wedge m_s) (\tau_l^{(s)})^2 \log(2/\delta_s)}{n_s^2 \varepsilon_s^2}\right).
\]
Each server \(s\) then outputs a transcript
\[
T_L^{(s)} = \{\widehat{f}_{lk}^{P,s}: k = 0, \ldots, 2^l-1, l = l_0, \ldots, L\}.
\]

The final estimator \(\widehat{f}^P\) is obtained via a post-processing step, where the transcripts are aggregated using weights that account for the heterogeneity of the servers. These weights depend on the resolution level \(l\), the number of local observations \(n_s\), the number of design points \(m_s\), the local privacy constraints \(\varepsilon_s\), and the smoothness level \(\alpha\). Specifically, the final estimator is:
\begin{equation}\label{eq:federated-independent-design-estimator}
	\widehat{f}^{P}_L = \sum_{l=l_0}^L\sum_{k=0}^{2^l -1} \widehat{f}^{P} _{lk}\psi_{lk} \quad \text{where} \quad \widehat{f}^{P} _{lk} = \sum_{s=1}^S  w^s_l \widehat{f}^{P,s} _{lk}
\end{equation}
with \(\sum_{s=1}^S w_l^{(s)} = 1\) for all \(l \geq l_0\). The weights \(w_l^{(s)}\) are chosen to be inversely proportional to the variance of \(\widehat{f}_{lk}^{P,s}\):
\begin{equation}\label{eq:choice-of-weights}
	w_l^{(s)} = \frac{u_l^{(s)}}{\sum_{s=1}^S u_l^{(s)}}, \quad \text{where} \quad u_l^{(s)} = n_s m_s \wedge m_s 2^{l(2\alpha+1)} \wedge 2^{-l} m_s n_s^2 \varepsilon_s^2 \wedge 2^{2l\alpha} n_s^2 \varepsilon_s^2.
\end{equation}
\begin{remark}
	The weights \(w_l^{(s)}\) account for the heterogeneity in data distribution across servers, including variations in the number of observations (\(n_s\)), the number of design points (\(m_s\)), and the privacy budgets (\(\varepsilon_s\)). By being inversely proportional to the variance of \(\widehat{f}_{lk}^{P,s}\), the weights prioritize servers that contribute more reliable estimates. The structure of \(u_l^{(s)}\) incorporates terms that reflect the impact of resolution level \(l\), smoothness parameter \(\alpha\), and privacy constraints.
\end{remark}

\begin{algorithm}[h]
	\caption{Private Estimation via Truncated Projection in Independent Design}
	\begin{algorithmic}[1]\label{alg:independent-design-estimator}
		\STATE \textbf{Input:} Observations \( Y_{ij}^{(s)} \), design points \( \zeta^{(s)}_{ij} \), wavelet basis \(\psi_{lk}\), truncation level \( L \), privacy budgets \(\varepsilon_s, \delta_s\)
		\STATE \textbf{Output:} Final estimator \( \widehat{f}^P_L \)
		\STATE \textbf{Clipped Coefficients:} Each server computes:
		\[
		\widehat{f}^{\tau,s}_{lk} = \frac{1}{n_s} \sum_{i=1}^{n_s} \left[ \frac{1}{m_s} \sum_{j=1}^{m_s} Y_{ij}^{(s)} \psi_{lk}(\zeta^{(s)}_{ij}) \right]_{\tau_l^{(s)}} \,\forall \,l=l_0,\ldots,L,\, k =0,\ldots,2^l -1 ,
		\]
		where $\tau_l^{(s)}$ is given by~\eqref{eq:def-clipping}.
		\STATE \textbf{Privatized Coefficients:} Apply the modified Gaussian mechanism:
		\[
		\widehat{f}^{P,s}_{lk} = \widehat{f}^{\tau,s}_{lk} + W^{(s)}_{lk}, \quad W^{(s)}_{lk} \overset{i.i.d.}{\sim} N\left(0, \frac{4 c_A^2L(2^l \wedge m) (\tau^{(s)}_l)^2 \log(2/\delta)}{n_s^2 \varepsilon_s^2} \right).
		\]
		\STATE \textbf{Weighted Aggregation:} Aggregate across servers:
		\[
		\widehat{f}^P_{lk} = \sum_{s=1}^S w_l^{(s)} \widehat{f}^{P,s}_{lk}, \quad w_l^{(s)} = \frac{u_l^{(s)}}{\sum_{s=1}^S u_l^{(s)}}, \quad u_l^{(s)} = n_s m_s \wedge m_s 2^{l(2\alpha+1)} \wedge 2^{-l}m_s n_s^2 \varepsilon_s^2 \wedge 2^{2l\alpha} n_s^2 \varepsilon_s^2.
		\]
		\STATE \textbf{Final Estimator:} Combine wavelet coefficients:
		\[
		\widehat{f}^P_L(x) = \sum_{l=l_0}^L \sum_{k=0}^{2^l-1} \widehat{f}^P_{lk} \psi_{lk}(x).
		\]
	\end{algorithmic}
\end{algorithm}

We summarize our final estimator in Algorithm \ref{alg:independent-design-estimator}.  
 
\begin{theorem}\label{thm:independent-design-upper-bound}
	Let \(\widehat{f}^P_L\) be the estimator defined in Algorithm \ref{alg:independent-design-estimator}.  Let $D^*$  satisfy~\eqref{eq:rate-determining-relation:independent-design} and let $L = [\log_2D^*]$ we obtain the following upper bound on the minimax rate,
	\begin{equation*}
		\sup_{f \in \cH^\alpha(R)} \E_f \|\widehat f^P_L - f\|_2^2 \lesssim  (\log N)^5 \log(2/\delta') (D^*)^{-2\alpha}.
	\end{equation*}
	where $\delta' = \min_s \delta_s$.
\end{theorem}
\begin{remark}
	The parameter \(D^*\), which satisfies the rate-determining relation~\eqref{eq:rate-determining-relation:independent-design}, can be interpreted as the effective dimension of the problem. It reflects the interplay between the resolution level \(L\) and the sample size \(N = \sum_{s=1}^S n_s\). Larger \(D^*\) corresponds to higher resolution levels and greater flexibility in representing the function \(f\), while also increasing the complexity of the estimation task. The optimal choice of \(D^*\) balances this trade-off to achieve the minimax rate.
\end{remark}

\begin{remark}
	In the absence of privacy constraints (\(\varepsilon_s \to \infty\)), the result in Theorem~\ref{thm:independent-design-upper-bound} obtains the minimax rates for the setting where different individuals may have different numbers of design points \(m_s\). To the best of our knowledge, these rates were not previously established in the literature. 
\end{remark}

\subsection{An optimal algorithm for the common design setting}\label{ssec:common-design-upper-bound}

In the common design setup, the design points \( \zeta_1, \zeta_2, \dots, \zeta_m \) are shared among all individuals and servers. These points are assumed to be publicly known, requiring no privatization. Here, the challenge lies in the fact that the design points are shared across all servers, which means that an optimal private estimator must balance the discretization error due to the shared design points with the privacy leakage.

Our estimation strategy follows two main steps:

\begin{enumerate}
	\item First, we obtain privatized averages of observations at each design point, effectively creating private, noisy estimates of the function at these specific points.
	\item Second, if the number of design points is large, we introduce a bagging step that divides the design points into groups.
	\item Third, these privatized averages present an inverse problem at the central server, which we solve using local polynomial regression. This regression effectively interpolates across the design points.
\end{enumerate}

Private averages at the design points of first step effectively pose an inverse problem at the central server; solved by local polynomial regression in the third step. The noise in this inverse problem at the central server is does not only depend on measurement noise, but also on the noise added by the privacy mechanism. If the number of design points is large, the noise added by the privacy mechanism needs to increase to avoid the privacy leakage due to revealing more measurements per individual. 

The second step employing bagging mitigates this by dividing the design points into groups and computes smoothed estimators that interpolate across groups. Intuitively, the purpose of the bagging step is used to effectively reduce the number of measurements $m$, in the scenario where the measurements reveal too much information about the individuals if they were all communicated to the central server. In terms of attaining the minimax rate, the bagging step is superfluous, in the sense that one could also effectively use a smaller number of measurements by taking a selection of design points. Bagging, however, is a more practical approach, as it a straightforward implementation that is more robust to the choice of design points.

Next, we describe the algorithm for the common design formally.

\paragraph{Privatized averages at the design points}

Given \( S \) servers and a set of observations from a function \( f \) at deterministic points \( \zeta_1, \zeta_2, \dots, \zeta_m \), the privatized mean \(\bar{Y}_{j}^{P,s}\) at design point \( \zeta_{j} \) on server \( s \) is computed as:
\[
\bar{Y}_{j}^{P,s} = \frac{1}{n_s} \sum_{i=1}^{n_s} \left[ Y^{(s)}_{i,j} \right]_{\tau} + W^{(s)}_j,
\]
where \( \left[ Y^{(s)}_{i,j} \right]_{\tau} \) represents the clipped observations with clipping threshold \(\tau = \sqrt{2\log N} + C_{\alpha,R}\) ($C_{\alpha,R}$ is as defined in Lemma \ref{lem:holder-norm-sup-norm}), and \( W^{(s)}_j \sim \mathcal{N}\left( 0, \frac{4 \tau^2 m \log (2/\delta_s)}{n_s^2 \varepsilon_s^2} \right) \) is a Gaussian noise term ensuring differential privacy. Here, \( N \) is the total sample size across all servers, \( n_s \) is the sample size on server \( s \), and \(\varepsilon_s\) is the privacy budget for server \( s \).

Each server outputs a transcript \( T^{(s)} = (\bar{Y}_{j}^{P,s})_{j \in [m]}\). The central server aggregates these transcripts using inverse variance weighting:
\[
\bar{Y}_{j}^{P} = \sum_{s=1}^S w_s \bar{Y}_{j}^{P,s},
\]
where the weights \( w_s \) are designed to account for server heterogeneity:
\[
w_s = \frac{u_s}{\sum_{s=1}^S u_s}, \quad u_s = D^{-1} n_s^2 \varepsilon_s^2 \wedge n_s.
\]
where $D$ solves~\eqref{eq:rate-determining-relation:common-design}.

\paragraph{Bagging}

The design points \( \zeta_1, \zeta_2, \dots, \zeta_m \) are ordered and divided into \( B \) disjoint sets \(\{G_b\}_{b \in [B]}\), each containing \( m_0 \) points, such that $\lfloor m/m_0 \rfloor \leq B \leq \lceil m/m_0 \rceil$. We choose groups $(G_b)_{b \in [B]}$ of sizes $m_0 = [D]$ where $D$ solves~\eqref{eq:rate-determining-relation:common-design}. In case of equispaced design (i.e $\zeta_j = j/m$), this amounts to $G_b = \left\{\frac{B(j-1)+b}{m}\,:\, j=1,\ldots,m_0\right\} $. 


For each group \( G_b \), a local polynomial estimator \( \widehat{f}^{P}_b(x) \) is computed based on the privatized means \(\bar{Y}_{j}^{P}\) corresponding to the design points in \( G_b \). Finally, the overall estimator \( \widehat{f}^{P}_B(x) \) is obtained by averaging across all groups:
\[
\widehat{f}^{P}_B(x) = \frac{1}{B} \sum_{b=1}^B \widehat{f}^{P}_b(x).
\]

\paragraph{Function approximation via local polynomials}

The local polynomial estimator approximates the function \( f \in \mathcal{H}^\alpha(R) \) by expanding it around \( x \) as:
\[
f(z) = f(x) + f'(x)(z - x) + \dots + \frac{f^{(p)}(x)}{p!} (z - x)^p,
\]
where \( p = \lfloor \alpha \rfloor \). This expansion can be written compactly as:
\[
f(z) = \Theta^\top(x) V\left( \frac{z - x}{h} \right),
\]
where \( V(u) = \begin{pmatrix} 1, u, \frac{u^2}{2!}, \dots, \frac{u^p}{p!} \end{pmatrix}^\top \) is the vector of scaled monomials, and \( \Theta(x) = \begin{pmatrix} f(x), f'(x)h, \dots, f^{(p)}(x) h^p \end{pmatrix}^\top \) represents the function's coefficients at \( x \), scaled by powers of the bandwidth \( h \).

To estimate \( f(x) \), we solve a weighted least squares problem for each group:
\[
\widehat{\Theta}_b(x) = \arg\min_{\Theta \in \mathbb{R}^{l+1}} \sum_{j \in G_b} \left[ \bar{Y}_{j}^{P} - \Theta^\top V\left( \frac{\zeta_{j} - x}{h} \right) \right]^2 K\left( \frac{\zeta_{j} - x}{h} \right).
\]
where $K$ can be any smooth enough kernel function satisfying
\[
\operatorname{supp}(K) \subseteq [-1,1], \quad \|K\|_\infty < \infty, \quad \text{and} \quad \int K(u) \, du = 1.
\]

The first component of \( \widehat{\Theta}_b(x) \), corresponding to \( \hat{f}^{P}_b(x) \), is the estimate of \( f(x) \) for group \( b\). 

The local polynomial estimator \( \hat{f}^{P}_b(x) \) for the \( b\)-th group can be shown to be equal to:
\[
\hat{f}_b(x) = \sum_{j \in G_b} \bar{Y}_{j}^p W_{b,j}^*(x),
\]
where the weights \( W_{b,j}^*(x) \) are determined by the kernel and the design points of the $b$th group:
\begin{equation}\label{eq:LP-weights}
	W_{b,j}^*(x) = \frac{1}{m_0 h} V^\top(0) B_{b,x}^{-1} V\left( \frac{\zeta_{j} - x}{h} \right) K\left( \frac{\zeta_{j} - x}{h} \right).
\end{equation}
The matrix \( B_{b,x} \) is the ``effective'' weighted design matrix for group \( b \) induced by $K$, given by
\[
B_{b,x} = \frac{1}{m_0 h} \sum_{j \in G_b} V\left( \frac{\zeta_{j} - x}{h} \right) V\left( \frac{\zeta_{j} - x}{h} \right)^\top K\left( \frac{\zeta_{j} - x}{h} \right).
\]

For the subsequent analysis, we rely on the following assumptions, where the choice of kernel and groups allows for additional flexibility. We remark that both assumptions (LP1) and (LP2) hold in particular for regular equi-spaced design under mild conditions on the Kernel $K$. See for example Lemma 1.5 in \cite{tsybakov_introduction_2009}.

\begin{itemize}
	\item [(LP1)] \( \lambda_{\min}(B_{b,x}) \geq \lambda_0 \) for all \( b \in [B] \), and \( x \in [0,1] \). 
	
	\item [(LP2)] For $b\in[B]$ , for some constant \( a \), and any measurable set \( A \), we have
	\[
	\frac{1}{m_0} \sum_{j\in G_b} \mathbb{I}(\zeta_{j} \in A) \leq a \max\left( \text{Leb}(A), \frac{1}{m_0} \right).
	\]
\end{itemize}

Assumption (LP1) ensures that the local design points in each group provide sufficient information for local polynomial estimation. Together with (LP2), these assumption ensures that within each group the design points \( (\zeta_{j})_{j\in G_b} \) are sufficiently spread over the interval \([0,1]\). 

\begin{algorithm}[h]
	\caption{Private Estimation via Smoothing and Bagging for Common Design}
	\begin{algorithmic}[1]\label{alg:common-design-estimator}
		\STATE \textbf{Input:} Design points \( T_1, \dots, T_m \), \( S \) servers, observations \( Y_{i,j}^{(s)} \), privacy budgets \(\varepsilon_s\), groups \(\{G_b\}_{b \in [B]}\) each of size $m_0$ , bandwidth \( h \), kernel \( K \)
		\STATE \textbf{Output:} Final estimator \( \hat{f}^P_B(x) \)
		\STATE \textbf{Privatized Means:} Each server computes privatized means:
		\[
		\bar{Y}_{j}^{P,s} = \frac{1}{n_s} \sum_{i=1}^{n_s} \left[ Y_{i,j}^{(s)} \right]_{\tau} + W_j^{(s)}, \quad W_j^{(s)} \sim \mathcal{N}\left( 0, \frac{4 \tau^2 m \log (2/\delta_s)}{n_s^2 \varepsilon_s^2} \right),
		\]
		where $\tau = \sqrt{2 \log N}+ C_{\alpha,R}$.
		\STATE \textbf{Aggregate Means:} Central server aggregates:
		\[
		\bar{Y}_j^P = \sum_{s=1}^S w_s \bar{Y}_{j}^{P,s}, \quad w_s = \frac{u_s}{\sum_{s=1}^S u_s}, \quad u_s = m_0^{-1} n_s^2 \varepsilon_s^2 \wedge n_s.
		\]
		
		\STATE \textbf{Group-wise Estimation:} For each group \( G_b \), compute local polynomial estimator:
		\[
		\widehat{f}_b^P(x) = \sum_{j \in G_b} \bar{Y}_j^P W_{b,j}^*(x),
		\]
		where \( W_{b,j}^*(x) \) are kernel-based weights given by~\eqref{eq:LP-weights}.
		\STATE \textbf{Final Estimator:} Aggregate across groups:
		\begin{equation}\label{eq:final-estimator-bagged}
			\widehat{f}^P_B(x) = \frac{1}{B} \sum_{b=1}^B \widehat{f}_b^P(x).
		\end{equation}
	\end{algorithmic}
\end{algorithm}

Algorithm \ref{alg:common-design-estimator} summarizes the differentially private protocol. By averaging across all groups in \eqref{eq:final-estimator-bagged}, the bagging estimator as \( \hat{f}^{P}_B(x) \) is computed in the central server. The following theorem establishes the convergence rate of this estimator.

\begin{theorem}\label{thm:common-design-upper-bound}
	Let \(\widehat{f}^P_B\) be the estimator defined in Algorithm \ref{alg:common-design-estimator}. We choose groups $(G_b)_{b \in [B]}$ of sizes $m_0 = D$ where $D$ solves~\eqref{eq:rate-determining-relation:common-design} and set the bandwidth $h$ to be $1/m_0$ . We also assume that for each of the groups $(G_b)_{b \in [B]} $, the design points in the groups $(\zeta_{j})_{j \in G_b} $ satisfy (LP1) and (LP2).
	Then,
	\begin{equation*}
		\sup_{f \in \cH^\alpha(R)} \E_f \|\hat f - f\|_2^2 \lesssim\log(N)\log(2/\delta') \left(D^{-2\alpha}\vee m^{-2\alpha}\right),
	\end{equation*}
	where $D$ solves~\eqref{eq:rate-determining-relation:common-design} $\delta' = \min_s \delta_s$.
\end{theorem}
\begin{remark}
	The same minimax risk as in Theorem~\ref{thm:common-design-upper-bound} can be achieved by using data from only one of the groups of size \(m_0\) instead of aggregating across all \(B\) groups. This is because, when using a single group, the amount of noise added to ensure differential privacy scales with \(m_0\), which is smaller than or equal to \(m\). Consequently, the effective privacy noise is reduced, and the overall estimation error remains comparable. While using all \(B\) groups can improve practical stability/performance by averaging across groups, it does not offer a theoretical advantage in terms of the minimax rate.
\end{remark}

%
%
%
%

\section{Lower bounds}\label{sec:lower-bounds}

As a complement to the upper bounds, we provide lower bounds, which together establish the minimax rates for the federated learning problem; resulting in Theorems  \ref{thm:rate-common-design} and \ref{thm:rate-independent-design}. We first present the lower bound for the independent design setting, followed by the lower bound for the common design setting.

\subsection{Lower bound for independent design}\label{ssec:lower-bound-independent-design}

We first present the lower bound, before discussing the proof. 

\begin{theorem}\label{thm:lower-bound-independent-design}
	Suppose that for $\delta ' = \min_s \delta_s$, it holds that $\delta' \log(1/\delta') \lesssim \left(\frac{n_s}{m_s} \wedge \sqrt{\frac{n_s}{m_s}}\right)\varepsilon_s^2/N$. 
	Let $D_*$ solve \eqref{eq:rate-determining-relation:independent-design}.  Then,
	\begin{equation}\label{eq:rate-lower-bound-independent-design}
		\inf_{\hat f \in \cM_{\varepsilon,\delta}} \sup_{f \in \cH^\alpha(R)} \E_f \|\hat f - f\|_2^2 \gtrsim D^{-2\alpha}_*.
	\end{equation}
\end{theorem}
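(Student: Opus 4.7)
The approach I would take is a federated-DP Assouad-type argument applied to a localized wavelet perturbation, combined with a server-wise data processing inequality (DPI) in the spirit of Cai, Chakraborty, and Vuursteen (2024), extended to accommodate the multi-scale localized structure of the functional setup.

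First I would pick a resolution $D \leq D_*$ achieving the infimum in~\eqref{eq:rate-determining-relation:independent-design}, and construct a hypothesis class indexed by $u \in \{-1,+1\}^D$ of the form $f_u = c_0 R D^{-\alpha-1/2}\sum_{k=0}^{D-1} u_k \psi_{lk}$, where $\psi_{lk}$ are compactly-supported Daubechies wavelets at level $l = \lceil \log_2 D\rceil$ as in Section~\ref{ssec:algorithm-independent-design}. The standard wavelet characterization of $\mathcal{H}^\alpha(R)$ ensures $f_u \in \mathcal{H}^\alpha(R)$ for $c_0$ small enough, and orthonormality gives per-flipped-coordinate separation $\|f_u - f_{u^{(k)}}\|_2^2 \asymp D^{-2\alpha-1}$. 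Assouad's lemma then yields
\[
\inf_{\hat f} \sup_f \E_f\|\hat f - f\|_2^2 \;\gtrsim\; D\cdot D^{-2\alpha-1}\cdot \min_{k\in[D]}\bigl(1 - \mathrm{TV}(\bar{\P}^{T}_{+k},\bar{\P}^{T}_{-k})\bigr),
\]
where $\bar{\P}^{T}_{\pm k}$ are the transcript mixtures conditional on $u_k=\pm 1$; it suffices to show $\mathrm{TV}_k \leq 1/2$ uniformly, which by Pinsker and independence across servers reduces to $\sum_s \mathrm{KL}(\bar{\P}^{T_s}_{+k}\|\bar{\P}^{T_s}_{-k})\lesssim 1$.

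The heart of the argument is establishing, for each server $s$ and each $k$, a server-wise bound
\[
\mathrm{KL}(\bar{\P}^{T_s}_{+k}\|\bar{\P}^{T_s}_{-k})\;\lesssim\; D^{-2\alpha}\cdot \min\bigl\{D^{-1} n_s m_s,\ D^{-2} n_s^2\varepsilon_s^2 m_s,\ D^{2\alpha} n_s,\ D^{2\alpha-1} n_s^2\varepsilon_s^2\bigr\},
\]
whose four terms match the defining relation. The first comes from the non-private chain rule applied to a per-user KL of order $m_s D^{-2\alpha-1}$. The second uses the federated-DP inequality $\mathrm{KL}\lesssim n_s^2\varepsilon_s^2\,\mathrm{TV}^2_{\text{user}}$ combined with a Hellinger-tensorization bound $\mathrm{TV}^2_{\text{user}}\lesssim m_s D^{-2\alpha-1}$. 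The remaining two ``saturated'' terms exploit the localization of $\psi_{lk}$ on an interval of length $D^{-1}$: only a $1/D$ fraction of a user's $m_s$ observations are informative about coordinate $k$, which both caps the per-user TV at $1$ and refines the sensitivity entering the DP group-privacy coupling by a factor $\sqrt{1/D}$. Summing over $s$ and using the definition of $D_*$ gives $\sum_s \mathrm{KL}\lesssim D^{-2\alpha}\cdot D_*^{2\alpha}\leq 1$, completing the Assouad step.

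The main technical obstacle will be the last ingredient: establishing the saturated terms $D^{2\alpha} n_s$ and $D^{2\alpha-1} n_s^2\varepsilon_s^2$ with the correct $D^{-1}$ factor relative to the naive bounds $n_s$ and $n_s^2\varepsilon_s^2$ that a blunt application of $\mathrm{TV}_{\text{user}}\leq 1$ would produce. This requires carefully extending the federated-DP DPI of Cai-Chakraborty-Vuursteen to localized, multi-scale score functions, tracking how the fractional-$1/D$ wavelet support interacts with both the user-level privacy coupling and the i.i.d.\ uniform design $\zeta_{ij}^{(s)}$. The $\delta'$ assumption will be used exactly here, to absorb the $(n_s\delta_s)$-terms arising from group privacy into lower-order contributions.
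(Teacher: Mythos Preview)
Your overall framework (Assouad with wavelet perturbations, per-coordinate KL bounds summed across servers) is a legitimate alternative to the paper's Van Trees/Fisher-information route, and for the first two terms in the minimum it works exactly as you describe. The genuine gap is in the two ``saturated'' terms $D^{2\alpha}n_s$ and $D^{2\alpha-1}n_s^2\varepsilon_s^2$, and the localization idea you sketch does not close it.

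The difficulty is this: your hypothesis class specifies only the mean function $f_u$, not the law of $X_i$. If one takes the simplest admissible choice $X_i=f_u$ (deterministic sample paths), then for a user with $m_s D^{-2\alpha-1}\gg 1$ the roughly $m_s/D$ observations falling in $\mathrm{supp}(\psi_{lk})$ already determine $u_k$ perfectly, so the per-user total variation for coordinate $k$ is $1-o(1)$, not $O(D^{-1/2})$. Localization controls how many observations are informative, but it does not bound how informative they collectively are. Consequently your fourth term is stuck at $n_s^2\varepsilon_s^2$ (not $D^{-1}n_s^2\varepsilon_s^2$), and there is no route to a KL bound of order $n_s$ for the third term either, since per-user KL is unbounded once TV saturates.

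The paper's resolution is to \emph{build the randomness into $X_i$ itself}: it takes $X_i^{(s)}=f+R\,2^{-L(\alpha+1/2)-1}\sum_k(1/2-B_{ki}^{(s)})\phi_{Lk}$ with i.i.d.\ $\mathrm{Beta}(2,2)$ coefficients. This keeps each $X_i^{(s)}$ in the H\"older ball almost surely while making the per-user Fisher information about $f_k$ equal to a constant times $2^{L(2\alpha+1)}$, \emph{independently of $m_s$}. The Markov structure $f\to X^{(s)}\to Y^{(s)}$ then lets one replace, for servers with large $m_s$, the $Y$-level information by the $X$-level information via data processing (formally, the paper enlarges the protocol class so that selected transcripts may depend on $X^{(s)}$ and takes the minimum of the four trace quantities server by server). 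This is what produces the $m_s$-free terms $2^{L(2\alpha+1)}n_s^2\varepsilon_s^2$ and $2^{2L(\alpha+1)}n_s$ in Lemma~\ref{lem:fisher-info-bounds}. Your proposal could likely be repaired along the same lines---introduce an analogous random-$X$ construction inside the Assouad mixtures and invoke the $f\to X\to Y$ data-processing step per user---but as written the localization argument alone is not enough, and the missing ingredient is precisely the per-user randomness in $X$.
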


Below, we briefly sketch the proof of Theorem \ref{thm:lower-bound-independent-design}. The challenge in proving the lower bound stems from the heterogeneity of the servers. Essentially, for each respective server, either the sample size, the number of design points, or the privacy constraints can form the bottleneck. 

The proof involves a careful construction of a data generating process that allows to capture these different bottlenecks for each server. From a technical standpoint, the heterogeneous nature of the servers is captured by considering the Fisher information of the model in an appropriate finite dimensional sub-model. The Fisher information tensorizes along the servers, which allows us provide separate information theoretic arguments for each server.

To provide a more detailed description of the argument, we introduce the aformentioned notions formally. Consider the $2^L$-dimensional sub-model given by
	\begin{equation}\label{eq:holder-submodel}
	\cH^\alpha_{R,L} : \left\{f \in \cH_\alpha^{R} \,:\, f =\sum_{k=1}^{2^L}f_k \phi_{Lk},\, f_k \in [-2^{-L (\alpha +1/2) - 1}R, 2^{-L (\alpha +1/2) - 1}R]  \right\},
	\end{equation}
	where $\phi_k$ are the wavelet basis functions at resolution level $L$.  Let $\mu$ and $\nu$ denote dominating measures for $\P_f^{Y^{(s)}}$ and $\P_f^{(Y^{(s)}X^{(s)})}$, respectively, for $f \in \cH^\alpha_{R,L}$.
	Define $f_L = (f_k)_{k=1}^{2^L}$ to be the vector of coefficients of $f$ in the wavelet basis. The transcript $T^{(s)}$ induces a ``Fisher information'', which can be computed to be equal
	\begin{equation}\label{eq:fisher-information-Ys}
		I^{Y^{s}|T^{(s)}}_f = \E_f \, \E_f\left[ S_f^{Y^{(s)}} \bigg| T^{(s)} \right] \E_f\left[ S_f^{Y^{(s)}} \bigg| T^{(s)} \right]^\top
	\end{equation}
	where $S_f^{Y^{(s)}}$ is given by 
	\begin{equation}\label{eq:effective-score-Ys}
		S_f^{Y^{(s)}_i} = \sum_{j=1}^m \left(Y_{ij}^{(s)} - \sum_{k=1}^{2^L}f_k \phi_l(\zeta^{(s)}_{ij})\right) \phi(\zeta^{(s)}_{ij}),
	\end{equation}
	which can be seen as a ``score function'' of the observational model for $Y^{(s)}$ in the sub-model $\cH^\alpha_{R,L}$, if $X_i = \E_f X_i$ almost everywhere. Let $I_f^{Y^{(s)}}$ denote the Fisher information of the observational model for $Y^{(s)}$ in the sub-model $\cH^\alpha_{R,L}$, which equals $\E_f S_f^{Y^{(s)}} {S_f^{Y^{(s)}}}^\top$.

	When $m_s$ tends to infinity, the observational model for $Y^{(s)}$ effectively contains close to the amount of information as the observational model in which $X^{(s)}$ is directly observed. Following this intuition, the right information quantity to consider for servers with large $m_s$ is the Fisher information of the observational model for $X^{(s)} = (X^{(s)})_{i\in [n]}$. Through data processing arguments, we can pass to the Fisher information of the observational model for $X^{(s)}$ given the transcript in the sub-model $\cH^\alpha_{R,L}$. We consider the following dynamics for $X^{(s)}$:
	\begin{equation}\label{eq:holder-smooth-X-generation}
		X_i^{(s)} = f + R 2^{-L(\alpha + 1/2) - 1} \sum_{k=1}^{2^L} (1/2 - B_{ki}^{(s)}) \phi_{Lk}, \quad i \in [n],
	\end{equation}
	for i.i.d. Beta$(2,2)$ random variables $B_{ki}^{(s)}$. It is easy to see that each $X^{(s)}_i$ takes values in a H\"older ball of radius $R$ and has mean $f$. Letting $S^{X^{(s)}}$ denote the score along the vector $f_L$, we let $I_f^{X^{(s)}|T^{(s)}}$ denote the Fisher information of the observational model for $X^{(s)}$ given the transcript in the sub-model $\cH^\alpha_{R,L}$, 
	\begin{equation}\label{eq:fisher-information-Ys}
		I^{X^{(s)}|T^{(s)}}_f = \E_f \, \E_f\left[ S_f^{X^{(s)}} \bigg| T^{(s)} \right] \E_f\left[ S_f^{X^{(s)}} \bigg| T^{(s)} \right]^\top.
	\end{equation}
	The Fisher information of $X_i^{(s)}$ is defined as $I^{X^{(s)}}_f= \E_f S_f^{X^{(s)}} {S_f^{X^{(s)}}}^\top$. 

	The crux of the proof lies in the following lemma, which provides a lower bound on the minimax risk by expressing it in terms of a combination of Fisher information quantities—each capturing a different bottleneck (sample sizes, number of design points, privacy constraints).
	\begin{lemma}\label{lem:functional-von-trees}
		Let $L \in \N$. The minimax risk $\inf_{\hat f \in \cM_{\varepsilon,\delta}} \sup_{f \in \cH^\alpha(R)} \E_f \|\hat f - f\|_2^2$ is lower bounded by
		\begin{equation}\label{eq:von-trees-lb-in-lemma}
			 \frac{2^{2L}}{ \sup_{f \in \cH^\alpha_{R,L}} \sum_{s=1}^{S}  \min \left\{ \text{Tr}(I_f^{Y^{(s)}|T^{(s)}}), \text{Tr}(I_f^{Y^{(s)}}), \text{Tr}(I_f^{X^{(s)}|T^{(s)}}), \text{Tr}(I_f^{X^{(s)}}) \right\} + 2^{2L(\alpha + 2)}}.
		\end{equation}

	\end{lemma}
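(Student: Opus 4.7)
My plan is to carry out a Bayesian (van Trees) argument on the finite-dimensional sub-model $\cH^\alpha_{R,L}$, combined with a per-server choice of data-generating mechanism so that, for each server, the bound automatically picks up the smallest of the four candidate Fisher informations. First I would restrict attention to the sub-model $\cH^\alpha_{R,L} \subset \cH^\alpha(R)$, so the minimax risk over the full H\"older ball dominates the Bayes risk under any prior supported on this sub-model. Using Parseval in the orthonormal wavelet basis gives $\|\hat f - f\|_2^2 \geq \sum_{k=1}^{2^L}(\hat f_k - f_k)^2$, reducing the problem to the parametric task of estimating $f_L = (f_k)_{k\leq 2^L}$. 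I would then put a product prior $\pi = \bigotimes_{k=1}^{2^L}\pi_k$, with each $\pi_k$ a smooth density on $[-R 2^{-L(\alpha+1/2)-1}, R 2^{-L(\alpha+1/2)-1}]$ whose Fisher information is of the order of its inverse variance, i.e.\ $2^{L(2\alpha+1)}$; summing the $2^L$ coordinate contributions and absorbing constants gives $\text{Tr}(I_\pi) \lesssim 2^{2L(\alpha+2)}$.

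Next I would apply a multivariate van Trees inequality, following the strategy of \cite{barnes2020fisher, cai2024optimal}, to obtain
\begin{equation*}
\inf_{\hat f \in \cM_{\varepsilon,\delta}}\int\sum_k \E_f(\hat f_k - f_k)^2\, d\pi(f_L) \;\geq\; \frac{(2^L)^2}{\sup_{f\in\cH^\alpha_{R,L}}\text{Tr}(I_f^T) + \text{Tr}(I_\pi)},
\end{equation*}
where $I_f^T$ is the Fisher information of the joint transcript $T = (T^{(s)})_{s=1}^S$ about $f_L$ and the sup simply upper-bounds the prior-averaged trace. Because the servers process disjoint, independent data, $I_f^T = \sum_{s=1}^S I_f^{T^{(s)}}$, and the remaining task is to bound $\text{Tr}(I_f^{T^{(s)}})$ for each $s$ by the per-server minimum appearing in \eqref{eq:von-trees-lb-in-lemma}.

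The crucial observation is that the laws of $X^{(1)},\dots,X^{(S)}$ may be specified independently across servers, as long as $\E X^{(s)} = f$ and $X^{(s)}\in\cH^\alpha(R)$ almost surely. For each server I would choose one of two ``hard'' mechanisms. Mechanism (A) sets $X_i^{(s)}\equiv f$ deterministically, reducing the observational model to a pure measurement-noise problem with effective score $S_f^{Y^{(s)}}$ from \eqref{eq:effective-score-Ys}; together with the identity $I^{T^{(s)}} = I^{Y^{(s)}|T^{(s)}}$ (the score of $T^{(s)}$ is the conditional expectation of the score of $Y^{(s)}$) and the standard data-processing inequality for Fisher information, this gives $\text{Tr}(I_f^{T^{(s)}}) \leq \min\{\text{Tr}(I^{Y^{(s)}|T^{(s)}}), \text{Tr}(I^{Y^{(s)}})\}$. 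Mechanism (B) is the Beta-perturbation scheme \eqref{eq:holder-smooth-X-generation}, under which the Markov chain $f_L \to X^{(s)} \to Y^{(s)} \to T^{(s)}$ yields $\text{Tr}(I_f^{T^{(s)}}) \leq \min\{\text{Tr}(I^{X^{(s)}|T^{(s)}}), \text{Tr}(I^{X^{(s)}})\}$ by the same two arguments. Picking, independently per server, whichever of (A) or (B) minimises the corresponding Fisher trace, and summing over $s$, produces a bound of the form $\sum_s \min\{\cdot,\cdot,\cdot,\cdot\}$; plugging this back into the van Trees inequality yields \eqref{eq:von-trees-lb-in-lemma}.

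The main obstacle is the final step: one must verify that (i) the per-server choice of mechanism is admissible, i.e.\ the resulting joint law of $(X^{(s)})_{s=1}^S$ still lies in the model class and respects the federated protocol structure; (ii) the identities $I^{T^{(s)}} = I^{Y^{(s)}|T^{(s)}}$ in (A) and $I^{T^{(s)}} = I^{X^{(s)}|T^{(s)}}$ in (B) hold for the conditional Fisher informations defined in \eqref{eq:fisher-information-Ys}, which requires a careful sub-model computation; and (iii) the data-processing inequalities transport through the DP randomness used to generate the transcripts. This is where the extension of the private van Trees machinery of \cite{barnes2020fisher, cai2024optimal} to the functional setting, with three interacting noise sources (functional variability in $X$, measurement noise in $Y$, and the DP privatisation noise), enters in a genuinely non-trivial way.
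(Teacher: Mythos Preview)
Your overall architecture matches the paper's proof: restrict to the sub-model $\cH^\alpha_{R,L}$, apply a multivariate van Trees inequality with a product prior whose Fisher information is of order $2^{2L(\alpha+1)}$ per coordinate, use server independence to additively decompose $\text{Tr}(I_f^T)$, and then argue that each summand is bounded by the minimum of the four Fisher traces. The prior and the van Trees step are exactly as in the paper (which uses the rescaled $\cos^2$ density of Gill--Levit).

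The gap is in how you obtain the \emph{per-server} minimum. You propose to choose, server by server, either Mechanism~(A) ($X_i^{(s)}\equiv f$) or Mechanism~(B) (the Beta perturbation \eqref{eq:holder-smooth-X-generation}). But the model stipulates that all $X_i^{(s)}$ across all servers are independent copies of a \emph{single} random function $X$; a data-generating process in which some servers receive degenerate $X$'s and others receive Beta-perturbed $X$'s does not lie in the model class, so it cannot serve as a hard instance in the Bayesian lower bound. Using Mechanism~(A) uniformly yields only $\sum_s \min\{\text{Tr}(I^{Y|T}),\text{Tr}(I^Y)\}$, and Mechanism~(B) uniformly yields only $\sum_s \min\{\text{Tr}(I^{X|T}),\text{Tr}(I^X)\}$; the maximum of these two bounds is in general strictly weaker than $\sum_s$ of the four-way minimum, which is what \eqref{eq:von-trees-lb-in-lemma} requires.

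The paper resolves this not by changing the data model but by \emph{enlarging the protocol class}: it fixes the Beta perturbation for $X$ on all servers and then allows the transcript $T^{(s)}$ to depend on $(X^{(s)},Y^{(s)})$ for $s\in\cS$ and on $(Y^{(s)},\Delta X^{(s)})$ with $\Delta X^{(s)}=X^{(s)}-f$ for $s\notin\cS$. Since enlarging the estimator/protocol class can only decrease the minimax risk, this is a legitimate lower-bound move. For $s\in\cS$ the Markov chain $f\to X^{(s)}\to Y^{(s)}$ gives $I^{T^{(s)}}=I^{X^{(s)}|T^{(s)}}$; for $s\notin\cS$, because $\Delta X^{(s)}$ is ancillary and $\tilde Y_{ij}^{(s)}=Y_{ij}^{(s)}-\Delta X^{(s)}(\zeta_{ij}^{(s)})=f(\zeta_{ij}^{(s)})+\xi_{ij}^{(s)}$ is sufficient for $f$, one recovers exactly the ``Mechanism~(A)'' score \eqref{eq:effective-score-Ys} and hence $I^{T^{(s)}}=I^{Y^{(s)}|T^{(s)}}$. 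Choosing $\cS$ server by server then yields the four-way minimum. Your obstacle~(i) is therefore the decisive point, and its resolution requires this protocol-enlargement plus sufficiency argument rather than the per-server data-model choice you propose.
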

	It can be seen as a generalization of the technique introduced by \cite{cai2024optimal}, which allows us to account for the situation where the observational model for $Y^{(s)}$ leads to close to equal performance for estimating $f$ as one would have if $X^{(s)}$ were to be observed directly. Its proof relies on a multivariate version of the von Trees inequality of \cite{gill1995applications} and data processing arguments exploiting the Markov chain structure $f \to X^{(s)} \to Y^{(s)}$ of the model.

	The minimum of the summand in \eqref{eq:von-trees-lb-in-lemma} effectively correspond to four different scenarios that can arise for each server, depending on which information theoretic property forms the bottleneck. 

\begin{enumerate}[(i)]
	\item In the first scenario we consider, both the privacy constraint, the limited number of observations as well the limited number of measurements affect the information that the transcript $T^{(s)}$ contains on $f$. This corresponds with the quantity $I_f^{Y^{(s)}|T^{(s)}}$, which we in turn bound by a constant multiple of $m_s n_s^2 \varepsilon_s^2$.
	\item In the second scenario, the privacy constraint is relatively lenient for the $s$-th server, in which case the Fisher information of its transcript satisfies $I_f^{Y^{(s)}|T^{(s)}} \asymp I_f^{Y^{(s)}}$, the latter of which can be computed to equal $2^L m_s n_s$.
	\item In the scenario that the $s$-th server contains relatively many measurements per individual, the sample size $n_s$ and privacy constraint form the bottleneck for the $s$-th server. In this case, $I_f^{X^{(s)}|T^{(s)}}$ is the relevant quantity to consider, which we bound by a constant multiple of $ 2^{L (2\alpha+1)} n_s^2 \varepsilon_s^2$ using data processing arguments.
	\item In the last scenario, both measurements and privacy budget are no longer part of the bottleneck, and just the (number of) $X_i$ observations locally determine information transmitted by machine $s$. In this case, the $I_f^{X^{(s)}|T^{(s)}} \asymp I_f^{X^{(s)}} \leq 2^{2L (\alpha+1)} n_s$.
\end{enumerate}

The bounds on the traces of the various Fisher information are given in the Lemma \ref{lem:fisher-info-bounds} below, for which we give a proof in Section \ref{sec:proofs-lower-bound-independent-design} of the Supplementary Material.

\begin{lemma}\label{lem:fisher-info-bounds}
	Suppose that for $\delta ' = \min_s \delta_s$, it holds that $\delta' \log(1/\delta') \lesssim \left(\frac{n_s}{m_s} \wedge \sqrt{\frac{n_s}{m_s}}\right)\varepsilon_s^2/N$. 

	Then,
	\begin{align*}
		\min \left\{ \text{Tr}(I_f^{Y^{(s)}|T^{(s)}}), \text{Tr}(I_f^{Y^{(s)}}), \text{Tr}(I_f^{X^{(s)}|T^{(s)}}), \text{Tr}(I_f^{X^{(s)}}) \right\} \lesssim \\ \min \left\{ m_s n_s^2 \varepsilon_s^2 , 2^L m_s n_s, 2^{L(2\alpha + 1)} n_s^2 \varepsilon_s^2, 2^{2L(\alpha + 1)} n_s \right\}.
	\end{align*}
	\end{lemma}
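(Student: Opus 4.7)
The proof decomposes into the four individual upper bounds
\begin{align*}
\text{Tr}(I_f^{Y^{(s)}}) &\lesssim 2^L m_s n_s, & \text{Tr}(I_f^{X^{(s)}}) &\lesssim 2^{2L(\alpha+1)} n_s,\\
\text{Tr}(I_f^{Y^{(s)}|T^{(s)}}) &\lesssim m_s n_s^2 \varepsilon_s^2, & \text{Tr}(I_f^{X^{(s)}|T^{(s)}}) &\lesssim 2^{L(2\alpha+1)} n_s^2 \varepsilon_s^2,
\end{align*}
whose minimum gives the claim. The plan is to first establish the two \emph{unconditional} bounds by direct computation, and then to reduce the two transcript-conditional bounds to them via a differentially private Fisher information inequality.

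For $\text{Tr}(I_f^{Y^{(s)}})$, I would substitute $X_i = \E_f X_i = f$ (valid since $f \in \cH^\alpha_{R,L}$) into the effective score \eqref{eq:effective-score-Ys}, so that each coordinate becomes $(S_f^{Y_i^{(s)}})_k = \sum_{j=1}^{m_s} \xi_{ij}^{(s)} \phi_{Lk}(\zeta_{ij}^{(s)})$. Independence across $(i,j)$ combined with the $L_2$-orthonormality $\E_\zeta[\phi_{Lk}(\zeta)\phi_{Lk'}(\zeta)] = \delta_{kk'}$ under the uniform design gives diagonal per-individual Fisher information $m_s I_{2^L}$; summing over $n_s$ independent individuals yields trace $2^L m_s n_s$. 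For $\text{Tr}(I_f^{X^{(s)}})$, projection of the dynamics \eqref{eq:holder-smooth-X-generation} onto the orthonormal basis $\{\phi_{Lk}\}_k$ decouples the problem into $2^L n_s$ independent location-family problems $\langle X_i^{(s)},\phi_{Lk}\rangle = f_k + c_L(1/2 - B_{ki}^{(s)})$ with $c_L := R\,2^{-L(\alpha+1/2)-1}$. The per-coordinate Fisher information of such a rescaled Beta location family is of order $c_L^{-2} \asymp 2^{L(2\alpha+1)}$ (up to a harmless boundary-induced logarithm that is absorbed by $\eta_N$), so the total trace is of order $2^L n_s \cdot 2^{L(2\alpha+1)} = 2^{2L(\alpha+1)} n_s$.

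For the two conditional bounds, I would invoke the DP Fisher information inequality developed for federated transcripts in \cite{cai2024optimal}: for any $(\varepsilon_s,\delta_s)$-DP transcript $T^{(s)}$ of $n_s$ independent records with per-individual score covariance $\Sigma_1$,
\[
\text{Tr}(I_f^{Z^{(s)}|T^{(s)}}) \lesssim \varepsilon_s^2 n_s^2 \,\lambda_{\max}(\Sigma_1) + (\text{negligible $\delta_s$-term}),
\]
where the approximate-privacy slack is controlled precisely by the hypothesis $\delta'\log(1/\delta') \lesssim (n_s/m_s \wedge \sqrt{n_s/m_s})\varepsilon_s^2/N$. For (1), the $Y$-score covariance computed above equals $m_s I_{2^L}$, giving $\lambda_{\max} = m_s$ and therefore $\text{Tr}(I_f^{Y^{(s)}|T^{(s)}}) \lesssim m_s n_s^2 \varepsilon_s^2$. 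For (3), I would first apply data-processing along the Markov chain $f \to X^{(s)} \to Y^{(s)} \to T^{(s)}$ to reduce to the $X$-observational model, whose per-individual score covariance has operator norm $\asymp c_L^{-2} \asymp 2^{L(2\alpha+1)}$ by the computation in (4); the DP inequality then produces $\text{Tr}(I_f^{X^{(s)}|T^{(s)}}) \lesssim 2^{L(2\alpha+1)} n_s^2 \varepsilon_s^2$.

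The main obstacle is to work with the sharp operator-norm (rather than trace-norm) form of the DP Fisher information inequality. A naive sensitivity argument that replaced $\lambda_{\max}(\Sigma_1)$ by $\text{Tr}(\Sigma_1)$ would introduce a spurious factor of $2^L$ in both (1) and (3), destroying the match with the upper bound of Theorem \ref{thm:independent-design-upper-bound}. Establishing the $\lambda_{\max}$-form amounts to exploiting the fact that each individual's $(\varepsilon_s,\delta_s)$-DP contribution acts as an essentially rank-one information channel on the multivariate score, and this is also the step at which the hypothesis on $\delta'$ must be invoked to dominate the additive approximate-DP correction. A secondary technical point is the boundary behaviour of the Beta location-family Fisher information in (4); either a slightly smoother perturbation family is used, or the resulting logarithmic correction is tracked and absorbed into $\eta_N$.
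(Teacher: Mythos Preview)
Your proposal is correct and follows essentially the same route as the paper. Both decompose into the four individual trace bounds, compute the unconditional ones directly from the score expressions (yielding $m_s n_s I_{2^L}$ for the $Y$-model and $c\,n_s 2^{L(2\alpha+1)} I_{2^L}$ for the $X$-model), and then obtain the two transcript-conditional bounds via the $\lambda_{\max}$-form of the DP Fisher-information inequality from \cite{cai2024optimal}; the paper spells this out as Lemmas~\ref{lem:trace-processing} and~\ref{lem:trace-fisher-info-Xs-bounds}, where the $\delta_s$ tail terms are controlled exactly as you anticipate. Your emphasis that the operator-norm (not trace-norm) version is essential to avoid a spurious $2^L$ factor is precisely the point. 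Your caveat about the Beta$(2,2)$ location family is well taken: the paper's Lemma~\ref{lem:beta-fisher-information} asserts a finite constant, but the integral $\int_{-1/2}^{1/2} 24u^2/(1/4-u^2)\,du$ in fact diverges logarithmically at the endpoints, so your suggestion to either smooth the perturbation family or absorb the correction into $\eta_N$ is the cleaner resolution.
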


The proof of Theorem \ref{thm:lower-bound-independent-design} follows a choice of $L$ as a function of $D_*$, where $D_*$ is the solution to equation \eqref{eq:rate-determining-relation:independent-design}.

\subsection{Lower bound for common design}\label{ssec:lower-bound-common-design}

For the common design setting, we prove the following lower bound on the minimax risk. 

\begin{theorem}\label{thm:common-design-lower-bound}
	Assume that, for $\delta ' = \min_s \delta_s$, we have $\delta' \log(1/\delta') \lesssim \left(\frac{n_s}{m_s} \wedge \sqrt{\frac{n_s}{m_s}}\right)\varepsilon_s^2/N$. 
	
	Then, given a common equispaced design $(\zeta_{j})_{j \in [m]}$ across the $S$ servers, the minimax estimation risk satisfies
	\begin{equation}\label{eq:rate-independent-design}
		\inf_{\hat f \in \cM_{\varepsilon,\delta}}	\sup_{f \in \cH^\alpha(R)} \E_f \|\hat f - f\|_2^2 \gtrsim D^{-2\alpha}_*,
	\end{equation}
where $D_*$ solves~\eqref{eq:rate-determining-relation:common-design}.
\end{theorem}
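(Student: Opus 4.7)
The plan is to combine two complementary lower bound strategies corresponding to the two terms of the rate-determining relation \eqref{eq:rate-determining-relation:common-design}: a discretization argument giving $m^{-2\alpha}$, and a federated van Trees argument giving $D_*^{-2\alpha}$ whenever the Fisher-information term is binding. For the discretization bound, I would construct $g \in \cH^\alpha(R)$ satisfying $g(\zeta_j) = 0$ for all $j \in [m]$ and $\|g\|_2^2 \asymp m^{-2\alpha}$; under the regularity (LP1) this can be achieved by stitching a scaled smooth bump of height $m^{-\alpha}$ into each gap between consecutive design points. For the worst-case data-generating law (which suffices for a minimax lower bound), one takes $X_i^{(s)} = f + V_i^{(s)}$ with $V_i^{(s)}$ i.i.d.\ zero-mean random functions whose distribution does not depend on $f$; then the joint law of $\{Y_{ij}^{(s)}\}_{i,j,s}$, and hence of any transcript $T$, is identical under $f = 0$ and $f = g$, yielding the two-point bound $\gtrsim m^{-2\alpha}$.

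For the Fisher-information-based bound I would follow the strategy of Theorem~\ref{thm:lower-bound-independent-design}, restricting to the $2^L$-dimensional wavelet sub-model $\cH^\alpha_{R,L}$ of \eqref{eq:holder-submodel} with $2^L = D_* \leq m$ (the regime in which the sum in \eqref{eq:rate-determining-relation:common-design} is the active minimum). A federated van Trees inequality in the spirit of Lemma~\ref{lem:functional-von-trees} then yields
\begin{equation*}
\inf_{\hat f \in \cM_{\varepsilon,\delta}} \sup_{f \in \cH^\alpha_{R,L}} \E_f \|\hat f - f\|_2^2 \gtrsim \frac{2^{2L}}{\sup_{f \in \cH^\alpha_{R,L}} \sum_{s=1}^S \min\{\text{Tr}(I_f^{Y^{(s)}|T^{(s)}}),\, \text{Tr}(I_f^{Y^{(s)}})\} + J_L},
\end{equation*}
where $J_L$ denotes the prior Fisher information, of polynomial order in $2^L$. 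The $I^{X^{(s)}}$ quantities present in the independent-design lemma do not enter here: with a shared deterministic design, the entire information each server has about $f$ factors through the $m$-vector of evaluations at $\zeta_1,\dots,\zeta_m$, so direct observation of the paths $X^{(s)}_i$ would convey no additional information beyond $Y^{(s)}$.

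It then remains to establish, for each server $s$, the bounds $\text{Tr}(I_f^{Y^{(s)}}) \lesssim 2^{2L} n_s$ and $\text{Tr}(I_f^{Y^{(s)}|T^{(s)}}) \lesssim 2^L n_s^2 \varepsilon_s^2$. The non-private bound I would obtain by direct Fisher-information computation on the Gaussian $m$-vector model for $(\bar Y^{(s)}_{j})_{j\in[m]}$, using the near-orthogonality $\sum_{j=1}^m \phi_{Lk}(\zeta_j)\phi_{Lk'}(\zeta_j) \asymp m\,\delta_{kk'}$ available under (LP1) when $2^L \leq m$, together with the fact that averaging over $n_s$ individuals produces effective noise variance of order $1/n_s$ per design point. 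The private bound I would derive via a Gaussian DP data-processing argument on the transcript, tracking the $\ell_2$-sensitivity of the sufficient statistic after projection onto the wavelet basis, analogous to the scheme used in Lemma~\ref{lem:fisher-info-bounds} for the independent design. Substituting these bounds and using \eqref{eq:rate-determining-relation:common-design} bounds the total Fisher information by $D_*^{2(\alpha+1)}$, producing the lower bound $\gtrsim D_*^{-2\alpha}$; combining with the discretization bound completes the proof. The main obstacle is the private bound $2^L n_s^2 \varepsilon_s^2$: unlike the factor $2^{L(2\alpha+1)}$ in the independent-design analogue, here the dependence on $2^L$ is only linear, reflecting that with a fixed shared design the DP noise is constrained to the $2^L$-dimensional subspace of the $m$-dimensional observation space that carries information about $f \in \cH^\alpha_{R,L}$, and proving this tight bound requires carefully coupling the sensitivity analysis to the wavelet rotation of the empirical mean vector.
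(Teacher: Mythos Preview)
Your high-level architecture---combining a discretization two-point bound for $m^{-2\alpha}$ with a federated van Trees argument targeting the pair of trace bounds $\text{Tr}(I_f^{Y^{(s)}}) \lesssim n_s 2^{2L}$ and $\text{Tr}(I_f^{Y^{(s)}|T^{(s)}}) \lesssim 2^L n_s^2\varepsilon_s^2$---matches the paper's, and the targets themselves are exactly right. The gap is in how you propose to reach those targets.

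If you run the Gaussian calculation as you describe---fixed design, $V_i^{(s)}$ unspecified (effectively zero) so that $\bar Y_j^{(s)} \approx f(\zeta_j) + N(0,1/n_s)$, and the near-orthogonality $\Phi\Phi^\top \asymp m\, I_{2^L}$ you quote---then the per-individual Fisher information in the sub-model is $\Phi\Phi^\top$, with largest eigenvalue $\asymp m$ and trace $\asymp m\,2^L$. This gives $\text{Tr}(I_f^{Y^{(s)}}) \asymp n_s m\,2^L$, and the DP data-processing lemma (which multiplies $\lambda_{\max}$ of the per-individual score covariance by $n_s^2\varepsilon_s^2$) yields $\text{Tr}(I_f^{Y^{(s)}|T^{(s)}}) \lesssim m\,n_s^2\varepsilon_s^2$. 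Both carry an extra factor $m/2^L$ relative to what you need; after plugging into van Trees and taking $2^L=D_*$, the resulting lower bound is weaker than $D_*^{-2\alpha}$ by exactly that factor whenever $D_* \ll m$. Your proposed ``sensitivity analysis coupled to the wavelet rotation'' does not repair this, because the data-processing inequality acts on the score of the observational model itself, not on any privatized statistic you choose downstream.

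What the paper does is \emph{choose the law of the random functions} $X_i^{(s)}$ so as to inflate the observation noise precisely in the $2^L$ directions that carry information about $f$. It sets $X_i^{(s)} = f_\mu + V_i^{(s)}$ with $V_i^{(s)}(t) = \sum_k Z_{ik}^{(s)} u_k(t)$, where $u_k \propto \phi_k$ is rescaled so that $\|u_k\|_\infty \leq 1$ on the design (keeping $X_i^{(s)}$ in the H\"older ball) and $Z_{ik}^{(s)}$ are i.i.d.\ truncated standard normals. The observation vector $(Y_{ij}^{(s)})_j$ then has covariance $\Sigma + I_m$ with $\Sigma$ block-diagonal in rank-one $uu^\top$ blocks, and a Sherman--Morrison computation on each block gives $\Phi(\Sigma+I_m)^{-1}\Phi^\top \preceq 2^L\|\phi\|_\infty^2\, I_{2^L}$: the eigenvalue drops from $m$ to $2^L$. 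This is what delivers both trace bounds. Your remark that the $I^{X}$-type quantities are irrelevant because observing $X$ adds nothing beyond $Y$ on a fixed design is true but beside the point: the role of the $X$-law here is not to withhold it from the analyst, but to exercise the minimax freedom to pick the hardest distribution, which on a common design is one that injects order-one noise aligned with the wavelet directions at the design points.
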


Below, we provide sketch of the proof, highlighting the differences with the independent design lower boun. A detailed proof is given in Section \ref{ssec:lower-bound-common-design} of the Supplementary Material. The lower bound relies on a similar construction as in the independent design case, but with a different data generating process. Under this common design setup, the minimax rate is either completely governed by the approximation error due to a limited number of design points, which is a cross-server phenomenon, or by the privacy constraints, which are server-specific. 

Formally, \eqref{eq:rate-determining-relation:common-design} indicates the occurance of two scenarios. When $D_* = m^{-2\alpha}$, the rate is governed by the deterministic approximation error (see e.g. \cite{devore1993constructive}). Here, the privacy budget is of no concern. The challenging regime in terms of its proof strategy is when $D_*$ takes its value in the second argument of the minimum of \eqref{eq:rate-determining-relation:common-design}. The server specific cost is captured by a similar argument as that of Lemma \ref{lem:functional-von-trees}, bounding the minimax risk by a combination of Fisher information quantities using a multivariate version of the von Trees inequality:
\begin{equation*}
	\frac{2^{2L}}{ \sup_{f \in \cH^\alpha_{R,L}} \sum_{s=1}^{S}  \min \left\{ \text{Tr}(I_f^{X^{(s)}|T^{(s)}}), \text{Tr}(I_f^{X^{(s)}}) \right\} + 2^{2L(\alpha + 2)}},
\end{equation*}
where the ``sub-model Fisher information'' matrices $I_f^{X^{(s)}|T^{(s)}}$ and $I_f^{X^{(s)}}$ relate to  different data generating process that target specifically the cost due to limited local sample sizes and privacy constraints. Combining a geomertric argument with a data processing argument, these matrics have trace bounded by constant multiples of $2^L n_s^2 \varepsilon_s^2$ and $n_s 2^{2L}$, respectively. Optimizing over the choice of $L$ then yields the result.

\section{Simulation Study}

To highlight the differences between the independent and common design settings, we consider the setup of central differential privacy. In such a setting, where data is collected by a central server, the difference between common and independent design just boils down to whether each individual has their measurements taken at the same design points, or whether the design points are random independent across individuals. 

In Section \ref{ssec:simulation-design}, we describe the simulation design. 

\subsection{Simulation Design}\label{ssec:simulation-design}

Specifically, the random curves \( X_i \) for each individual are generated as follows:
\begin{equation}
	X(t) = R \cdot \sum_{l=0}^{L^*} \sum_{k=0}^{2^l - 1} s_{lk} 2^{-l(\alpha + 1/2)} \psi_{lk}(t), \quad \text{where} \quad s_{lk} \overset{\text{i.i.d.}}{\sim} \text{Rad}(p).
\end{equation}
Here, the wavelets \( \psi_{lk} \) are selected from the Daubechies' extremal phase wavelet family. The parameter \( R \) represents the radius of the H\"older ball in which the random function resides, and \( \alpha \) denotes the smoothness parameter. The term \(\text{Rad}(p)\) refers to the Rademacher distribution, which takes values \( +1 \) and \( -1 \) with probabilities \( p \) and \( 1-p \), respectively.

This construction ensures that the random functions \( X_i \) exhibit the desired smoothness properties and fall within the H\"older ball of radius \( R \), while the use of Daubechies' wavelets allows for compact support and flexibility in approximation.

We are interested in estimating the mean of the these curves given by $f(\cdot ) = \E X(\cdot)$ which evaluates to
\begin{equation}
	f(t) = R \cdot \sum_{l=0}^{L^*} \sum_{k=0}^{2^l - 1} (2p -1)\cdot 2^{-l(\alpha + 1/2)} \psi_{lk}(t).
\end{equation}
For both of the simulation studies below we set $R =2$, ${L^*} = 15$ and $p = 0.9$.
\subsection{Independent Design}

\begin{figure}[t]
	\centering
	\includegraphics[scale = 0.8]{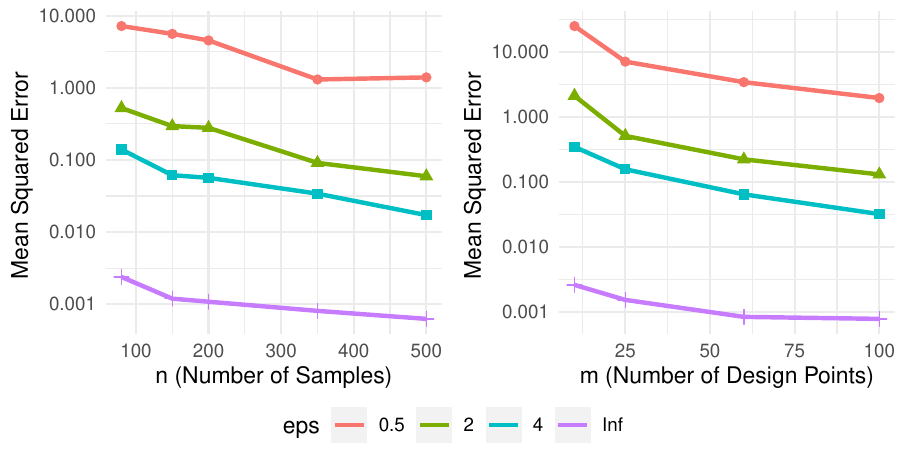}
	\caption{ MSE vs $n$ (with fixed $m =64$) and MSE vs $m$ (with fixed $n= 200$) for Independent design}\label{fig:independent-design}
\end{figure}

In the independent design setting, each individual has their sampling points \( \zeta_{ij} \) drawn independently from a uniform distribution over \([0,1]\). This accounts for greater variability in the observed data, as both the sampling locations and the random curves \( X_i \) vary across individuals. To estimate the mean function \( f(t) = \mathbb{E}[X(t)] \), we employ the wavelet-based private estimator of Section \ref{ssec:algorithm-independent-design} (Algorithm \ref{alg:independent-design-estimator}). We study the impact of varying \( n \), the number of individuals, and \( m \), the number of sampling points per individual, for different privacy budgets \( \epsilon \). Throughout, we set the privacy parameter \( \delta = 1/n^2 \). The performance is evaluated using the mean squared error (MSE), averaged over multiple runs, to capture the estimator's accuracy across varying settings. Figure~\ref{fig:independent-design} presents the MSE (log-scale) against the number of samples ($n$) and design points ($m$). 

The intuitively obvious finding that can be observed from both panels in Figure \ref{fig:independent-design}, is that estimation error decreases as the privacy budget ($\varepsilon$) increases and / or as the number of individuals in the sample increase $n$. Similarly, for an increase in the number of measurements, we see that an increase in the number of design points $m$ leads to a decrease in the estimation error. However, for very large $m$, the error plateaus, indicating a phase transition (depending on $\varepsilon$) where the MSE becomes independent of $m$, which in line with the main results (see Corollary \ref{cor:rate-independent-design}).

\subsection{Common Design}\label{ssec:simulation-common-design}

In the common design setting, all individuals share the same set of fixed sampling points, chosen as \( m \) equally spaced points in the interval \([0,1]\), whilst otherwise the simulation setting remains the same: For each individual, we observe noisy evaluations of their random curve \( X_i \) at these fixed design points. To this end, we employ the privatized local polynomial regression estimator of Section \ref{ssec:common-design-upper-bound} (Algorithm \ref{alg:common-design-estimator}), where we choose $K$ to be Gaussian kernel. 

We study the effect of varying \( n \), the number of individuals, and \( m \), the number of design points, under different privacy budgets \( \epsilon \). Throughout, we set \( \delta = 1/n^2 \) as the privacy parameter for all simulations. The performance of the estimators is evaluated using the integrated mean squared error.  Figure~\ref{fig:common-design} presents the MSE (log-scale) against the number of samples ($n$) and design points ($m$), providing a comparison of the impact of \( n \), \( m \), and \( \epsilon \) on estimation accuracy. 

Similarly to the dependent design case, the estimation error decreases as the privacy budget ($\varepsilon$) increases and / or as $n$ increases, with a plateau occuring for very large $n$ and $\varepsilon$. This plateau corresponds to the phase transition where the estimation error is essentially fully determined by the approximation error due to the limited number of design points, and is not observed in the independent design case. This corroborates the main results, comparing Corollaries \ref{cor:rate-independent-design} and \ref{cor:rate-common-design}.

Another stark difference compared to the independent design case, is that the estimation error plateaus much quicker as $m$ increases, transitioning to a regime where the rate becomes independent of $m$. This is in line with the main results (see Corollary \ref{cor:rate-common-design}), which state that there is only a benefit of having more measurements per individual if the privacy constraints are not forming a bottleneck. In the regimes where the differential privacy constraint is sufficiently lenient, there is a benefit of additional measurements up to the point that the number of individuals forms the bottleneck, as can be seen in the case where $\varepsilon = \infty$ in Figure \ref{fig:common-design}.

\begin{figure}[t]
	\centering
	\includegraphics[scale = 0.8]{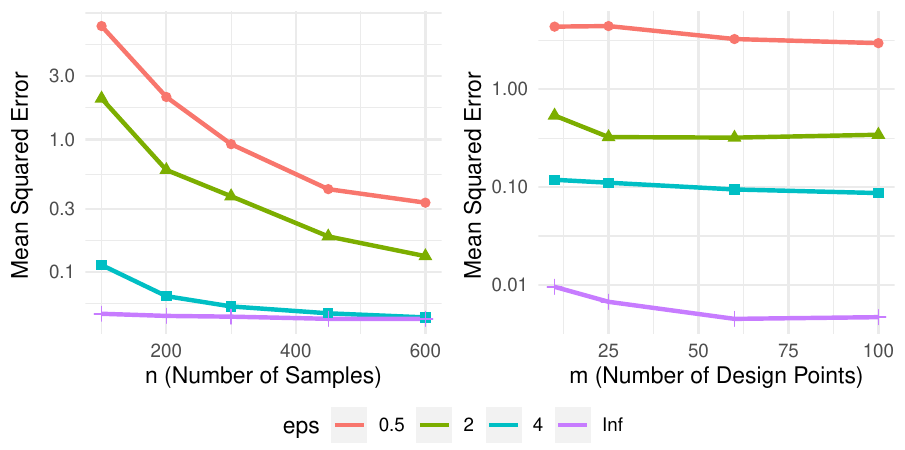}
	\caption{Log-scale MSE vs $n$ (with fixed $m =64$) and vs $m$ (with fixed $n= 200$) for the common design setting.}\label{fig:common-design}
\end{figure}

\bibliographystyle{plain}
\bibliography{references}

\section*{Supplementary Material}

\section{Proofs related to the upper bound theorems}

\subsection{Proof for the independent design upper bound}
We first prove a upper bound on the $\ell_2$-sensitivity i.e Lemma
 \ref{lemma:L2-sensitivity-independent-design}.
 \subsubsection{Proof of Lemma \ref{lemma:L2-sensitivity-independent-design}}\label{sec:proof-of-L2-sensitivity}
 \begin{proof}
 	We aim to bound the $\ell_2$-sensitivity of the statistic $\tilde{f}^{\tau,s}_L$ under neighboring datasets \( Z^{(s)} \) and \( Z^{'(s)} \). Neighboring datasets differ in at most one individual:
 	\[
 	Z^{(s)}_i = \big(Y_{ij}^{(s)}, \zeta^{(s)}_{ij}\big)_{j \in [m_s]} \neq Z^{'(s)}_i = \big(Y_{ij}^{'(s)}, \zeta_{ij}^{'(s)}\big)_{j \in [m_s]}.
 	\]
 	Define the privatized coefficient for \( Z^{'(s)} \):
 	\[
 	\tilde{f}^{\prime \tau, s}_{lk} := \frac{1}{\sqrt{2^l \wedge m_s} n_s} \sum_{i=1}^{n_s} \left[ \frac{1}{m_s} \sum_{j=1}^{m_s} Y^{'(s)}_{ij} \psi_{lk}(T^{'(s)}_{ij}) \right]_{\tau_l^{(s)}}.
 	\]
 	The $\ell_2$ sensitivity is given by:
 	\[
 	\left\|\tilde{f}^{\tau,s}_L(Z^{(s)}) - \tilde{f}^{\tau,s}_L(Z^{'(s)}) \right\|_2^2 = \sum_{l=l_0}^L \sum_{k=0}^{2^l - 1} \big(\tilde{f}^{\tau,s}_{lk} - \tilde{f}^{\prime \tau,s}_{lk}\big)^2.
 	\]
 	Expanding $\big(\tilde{f}^{\tau,s}_{lk} - \tilde{f}^{\prime \tau,s}_{lk}\big)^2$, it can be seen to equal
 	\[
 	 \frac{1}{(2^l \wedge m_s) (\tau_l^{(s)})^2 n_s^2} \left(\left[\frac{1}{m_s} \sum_{j=1}^{m_s} Y^{(s)}_{ij} \psi_{lk}(\zeta^{(s)}_{ij}) \right]_{\tau_l^{(s)}} - \left[\frac{1}{m_s} \sum_{j=1}^{m_s} Y^{'(s)}_{ij} \psi_{lk}(\zeta^{'(s)}_{ij}) \right]_{\tau_l^{(s)}}\right)^2.
 	\]
 	Since the wavelet basis \(\psi_{lk}\) is compactly supported, with its support of the order $2^{-l}$ at resolution level $l$, at most a constant number of wavelet functions overlap at any design point at each resolution level. Thus, the summation over \(k\) is bounded by the number of overlapping supports. Let \(c_A\) denote this constant, which gives the upper bound 
 	\[
 	\sum_{l=l_0}^L \sum_{k=0}^{2^l - 1} \frac{1}{(2^l \wedge m_s) n_s^2} \mathbbm{1}\left\{\exists j : \{\zeta^{(s)}_{ij}, \zeta_{ij}^{'(s)}\} \cap \text{supp}(\psi_{lk}) \neq \emptyset\right\} 
 	\leq \sum_{l=l_0}^L \frac{c_A (2^l \wedge m_s)}{(2^l \wedge m_s) n_s^2}.
 	\]
 	Combining terms, the $\ell_2$ sensitivity satisfies:
 	\[
 	\left\|\tilde{f}^{\tau,s}_L(Z^{(s)}) - \tilde{f}^{\tau,s}_L(Z^{'(s)})\right\|_2^2 \leq \frac{c^2_A L}{n_s^2},
 	\]
 	for some constant \(c_A > 0\). Taking the square root completes the proof.
 \end{proof}
 
\subsubsection{Proof of upper bound for independent design}\label{sssec:proof-of-independent-design-upper-bound}

We next prove the upper bound i.e Theorem \ref{thm:independent-design-upper-bound}.

\begin{proof}[Proof of Theorem \ref{thm:independent-design-upper-bound}]
	Let \( U^{(s)}_i = \big(U^{(s)}_{i,lk}\big)_{k=0,\ldots,2^l-1, \, l_0 \leq l \leq L} \), where:
	\begin{equation}\label{eq:def-U_il}
	U^{(s)}_{i,lk} := \frac{1}{m_s} \sum_{j=1}^{m_s} Y^{(s)}_{ij} \psi_{lk}(\zeta^{(s)}_{ij}).
	\end{equation}
	The randomized \((\varepsilon_s, \delta_s)\)-DP algorithm outputs a noisy version of the clipped coefficients:
	\[
	\widehat{f}^{P,s}_{lk} = \widehat{f}^{\tau,s}_{lk} + W^{(s)}_{lk}, \quad \widehat{f}^{\tau,s}_{lk} = \frac{1}{n_s} \sum_{i=1}^{n_s} \big[ U^{(s)}_{i,lk} \big]_{\tau_l^{(s)}},
	\]
	where \( W^{(s)}_{lk} \overset{\text{i.i.d.}}{\sim} N\left(0, \frac{4L(2^l \wedge m_s)(\tau_l^{(s)})^2 \log(2/\delta_s)}{n_s^2 \varepsilon_s^2}\right). \)

	Given that the wavelets form and orthonormal basis, Plancharel's theorem yields that the squared $L_2$-risk is given by
	\begin{equation}\label{eq:Plancharel}
	 \sum_{l\geq l_0}^{\infty} \sum_{k=1}^{2^l}	\E_f\big(\widehat{f}^P_{lk} - f_{lk}\big)^2.
	\end{equation}
	where $\widehat{f}^P_{lk} = 0$ for $l > L$. For $l \leq L$, each summand equals 
	\[
		\E_f\left(\sum_{s=1}^S w_l^{(s)} \widehat{f}^{P,s}_{lk} - f_{lk}\right)^2  = \underbrace{\E_f\left(\sum_{s=1}^S w_l^{(s)} \widehat{f}^{\tau,s}_{lk} - f_{lk}\right)^2}_{\text{bias + sampling variance}} + \underbrace{\E_f\left(\sum_{s=1}^S w_l^{(s)} W^{(s)}_{lk}\right)^2}_{\text{Privacy Noise}}.
	\]
	Below, we bound each of these terms separately.

	\paragraph{Bounding the sampling variance and bias}
	Under the event \( A^{(s)}_{lk} \equiv \{\forall i \in [n_s] : [U^{(s)}_{i,lk}]_{\tau_l^{(s)}} = U^{(s)}_{i,lk} \} \), the clipped estimator \(\widehat{f}^{\tau,s}_{lk}\) equals the unbiased sample mean \(\widehat{f}^{(s)}_{lk}\). By Lemma~\ref{lemma:concentration-of-U}, this occurs with high probability:
	\[
	\P(A^{(s)}_{lk}) \geq 1 - \frac{1}{N^c}.
	\]
	Decomposing the expectation into two parts, we have
		\begin{align*}
		\E_f&\left( \sum_{s=1}^Sw_l^{(s)}\widehat f^{\tau,s}_{lk}- f_{lk}\right)^2\\ &= \E_f\left( \sum_{s=1}^Sw_l^{(s)}\widehat f^{\tau,s}_{lk}- f_{lk}\right)^2\mathbbm{1}\{\cap_{s=1}^SA^{(s)}_{lk}\} +   \E_f\left( \sum_{s=1}^Sw_l^{(s)}\widehat f^{\tau,s}_{lk}- f_{lk}\right)^2\mathbbm{1}\left\{\cap_{s=1}^SA^{(s)}_{lk}\right\}^c\\
		&\leq \E_f\left( \sum_{s=1}^Sw_l^{(s)}\widehat f^{(s)}_{lk}- f_{lk}\right)^2\mathbbm{1}\{\cap_{s=1}^SA^{(s)}_{lk}\} + c_R\sum_{s=1}^S\E_f \mathbbm{1}\{(A^{(s)}_{lk})^c\} \\
		&\leq\sum_{s=1}^S(w_l^{(s)})^2 \E_f(\widehat f^{(s)}_{lk}- f_{lk})^2  + c_R\frac{S}{N^c}.
	\end{align*}
	Here, $\widehat{f}^{(s)}_{lk} - f_{lk}$ has $\E_f$-expectation zero and satisfies 
	\[
	\E_f \big(\widehat{f}^{(s)}_{lk} - f_{lk}\big)^2 \lesssim \frac{1}{m_s n_s} + \frac{2^{-l(2\alpha+1)}}{n_s},
	\]
	by Lemma \ref{lemma:MSE-of-non-private-estimator}. Combining these bounds, we have that 
	\[
	\E\left( \sum_{s=1}^Sw_l^{(s)}\widehat f^{\tau,s}_{lk}- f_{lk}\right)^2 \lesssim \sum_{s=1}^S(w_l^{(s)})^2 \left(\frac{1}{m_sn_s} + \frac{2^{-l(2\alpha+1)}}{n_s}\right) + \frac{S}{N^c}.
	\]

	\paragraph{Bounding the Privacy Noise}
	We have
	\[
	\E_f\left(\sum_{s=1}^S w_l^{(s)} W^{(s)}_{lk}\right)^2 = \sum_{s=1}^S \big(w_l^{(s)}\big)^2 \cdot \frac{4L(2^l \wedge m_s)(\tau_l^{(s)})^2 \log(4/\delta)}{n_s^2 \varepsilon_s^2}.
	\]
	Recalling the definition of the clipped estimator, we have that
	\begin{equation*}
		\tau_l^{(s)} = 2 (2c\log N)^{3/2}\left(\sqrt{\frac{1}{m_s}} + \frac{1}{3}\|\psi\|_\infty \frac{2^{l/2}}{m_s}\right)+ R2^{-l(\alpha+1/2)},
		\end{equation*}
	from which it follows that
	\[
	(2^l \wedge m_s) (\tau_l^{(s)})^2 \lesssim (\log N)^3 \left(\frac{1}{m_s} + \frac{2^l}{m_s^2}\right)(2^l \wedge m_s) + 2^{-2l(\alpha+1/2)}(2^l \wedge m_s).
	\]
	Substituting this in the earlier expression, we obtain that
	\[
	\E\left(\sum_{s=1}^S w_l^{(s)} W^{(s)}_{lk}\right)^2 \lesssim \sum_{s=1}^S \big(w_l^{(s)}\big)^2 \cdot \left(\frac{L2^l (\log N)^3 \log(4/\delta)}{m_s n_s^2 \varepsilon_s^2} + \frac{L2^{-2\alpha l} \log(4/\delta)}{n_s^2 \varepsilon_s^2}\right).
	\]
	
	\paragraph{Combining the bounds}
	Combining the previously derivaed bounds, we find that $\E_f\big(\widehat{f}^P_{lk} - f_{lk}\big)^2$ is bounded by a constant multiple of
	\[
	 \sum_{s=1}^S \big(w_l^{(s)}\big)^2 \left[ \frac{1}{m_s n_s} + \frac{2^{-l(2\alpha+1)}}{n_s} + \frac{L2^l (\log N)^3 \log(4/\delta)}{m_s n_s^2 \varepsilon_s^2} + \frac{L2^{-2\alpha l} \log(4/\delta)}{n_s^2 \varepsilon_s^2} \right] +  \frac{S}{N^c},
	\]
	for each $l \leq L$. 
	
	The optimal choice of weights \(w_l^{(s)}\) is revealed to be inversely proportional to the variance;
	\begin{equation*}
		w_l^{(s)} = \frac{u_l^{(s)}}{\sum_{s=1}^S u_l^{(s)}}, \quad \text{where} \quad u_l^{(s)} = n_s m_s \wedge m_s 2^{l(2\alpha+1)} \wedge 2^{-l} m_s n_s^2 \varepsilon_s^2 \wedge 2^{2l\alpha} n_s^2 \varepsilon_s^2.
	\end{equation*}
	e.g.
	\[
	w_l^{(s)} \propto \frac{1}{m_s n_s \wedge 2^{l(2\alpha+1)} n_s \wedge 2^{-l} m_s n_s^2 \varepsilon_s^2 \wedge 2^{2\alpha l} n_s^2 \varepsilon_s^2}.
	\]
	This implies that $\E_f(\widehat f^{P}_{lk}- f_{lk})^2$ is bounded above by
	\begin{align*}
		& L \log(N)^3 \log(4/\delta)\sum_{s=1}^S(w_l^{(s)})^2 \left(\frac{1}{m_sn_s} + \frac{2^{-l(2\alpha+1)}}{n_s}+ \frac{ 2^l }{m_sn_s^2 \varepsilon_s^2} + \frac{   2^{-2\alpha l} }{n_s^2 \varepsilon_s^2}\right) + \frac{S}{N^c}\\
		&\lesssim L \log(N)^3 \log(4/\delta) \sum_{s=1}^S(w_l^{(s)})^2\left(\frac{1}{m_sn_s \wedge 2^{l(2\alpha+1)}n_s \wedge2^{-l} m_sn_s^2 \varepsilon_s^2 \wedge 2^{2\alpha l}n_s^2 \varepsilon_s^2}\right) + \frac{S}{N^c}\\
		&=  L \log(N)^3 \log(4/\delta) \frac{1}{\sum_{s=1}^S m_sn_s \wedge 2^{l(2\alpha+1)}n_s \wedge2^{-l} m_sn_s^2 \varepsilon_s^2 \wedge 2^{2\alpha l}n_s^2 \varepsilon_s^2}+ \frac{S}{N^c}
	\end{align*}

	Expanding \eqref{eq:Plancharel}, we obtain
	\[
	\E_f\|\widehat{f}^P_L - f\|_2^2 = \sum_{l=l_0}^L \sum_{k=0}^{2^l - 1} \E_f\big(\widehat{f}^P_{lk} - f_{lk}\big)^2 + \sum_{l > L} \sum_{k=0}^{2^l - 1} f_{lk}^2.
	\]
	Hence we have that the estimation risk $\E_f\|\widehat f^{P}_L - f\|_2^2$ is bounded as
	\begin{align*}
		&  L \log(N)^3 \log(4/\delta) \sum_{l=l_0}^L\frac{2^l}{\sum_{s=1}^S m_sn_s \wedge 2^{l(2\alpha+1)}n_s \wedge2^{-l} m_sn_s^2 \varepsilon_s^2 \wedge 2^{2\alpha l}n_s^2 \varepsilon_s^2}+ \frac{S2^L}{N^c} + 2^{-2L\alpha}\\
		&\lesssim   L \log(N)^3 \log(4/\delta) \sum_{l=l_0}^L\frac{2^l}{\sum_{s=1}^S m_sn_s \wedge 2^{l(2\alpha+1)}n_s \wedge2^{-l} m_sn_s^2 \varepsilon_s^2 \wedge 2^{2\alpha l}n_s^2 \varepsilon_s^2} + 2^{-2L\alpha}\\
		&\lesssim   L^2 \log(N)^3 \log(4/\delta) \max_{l=l_0,\ldots,L}\frac{2^l}{\sum_{s=1}^S m_sn_s \wedge 2^{l(2\alpha+1)}n_s \wedge2^{-l} m_sn_s^2 \varepsilon_s^2 \wedge 2^{2\alpha l}n_s^2 \varepsilon_s^2} + 2^{-2L\alpha}
	\end{align*}
	The second last line follows from the fact that $S2^L/N = O(1/N)$ (since $2^L <N$ and $c > 3$).
	By setting $d = 2^l$  and $D = 2^L$, we obtain
	\begin{align*}
		\E_f\|\widehat f^{P}_L - f\|_2^2 \lesssim \frac{(\log D)^2 (\log(N))^3 \log(4/\delta)}{\min_{d \leq D} \sum_{s=1}^S d^{-1}m_sn_s \wedge d^{2\alpha}n_s \wedge d^{-2} m_sn_s^2 \varepsilon_s^2 \wedge d^{2\alpha -1}n_s^2 \varepsilon_s^2} + D^{-2\alpha}
	\end{align*}
	Using $D^*$ which satisfies~\eqref{eq:rate-determining-relation:independent-design} and letting $L = [\log_2D^*]$ we obtain the minimax rate of $$(\log D^*)^2 \log(N)^3 \log(4/\delta) (D^*)^{-2\alpha}$$. 
	Since $D^* \leq N$ this implies the minimax risk is further upper bounded by
	$$(\log(N)^5 \log(4/\delta) (D^*)^{-2\alpha}.$$
\end{proof}

\subsubsection{Proof of Lemma \ref{lemma:concentration-of-U}}

\begin{lemma}\label{lemma:concentration-of-U-supplement}
	
	For a fixed $l$ and $k$, $\{U^{(s)}_{i,lk}\}_{i\in[n]}$ as defined in~\eqref{eq:def-U_il} satisfies
	\begin{equation*}
		\P_f(\forall \, i\; |U^{(s)}_{i,lk}| < \tau_l^{(s)}) \geq 1 - \gamma
	\end{equation*}
	with 
	\begin{equation}
		\tau^{(s)}_l = 2 (2c\log N)^{3/2}\left(\sqrt{\frac{1}{m_s}} + \frac{1}{3}\|\psi\|_\infty \frac{2^{l/2}}{m_s}\right)+ R2^{-l(\alpha+1/2)} \quad \text{and} \quad \gamma = (N)^{-c}.
	\end{equation}
\end{lemma}

\begin{proof}
	By a union bound, it is enough to show that $\P_f( |U^{(s)}_{i,lk}| > \tau_l^{(s)}) \leq \gamma/n$. By the triangle inequality a union bound,
	\begin{align}
		\P_f( |U^{(s)}_{i,lk}| > \tau_l^{(s)}) &\leq \P_f\left( |U^{(s)}_{i,lk} - \E(U^{(s)}_{i,lk} \mid X^{(s)}_i) | > \sqrt{\frac{c_1 \log(N)}{m_s}}\right) \label{eq:Uij-concentration-proof-initial-bound} \\ 
		&+ \P_f\left( |\E(U^{(s)}_{i,lk} \mid X^{(s)}_i) | > c_2 2^{-l(\alpha+1/2)}\right). \nonumber
	\end{align}
	We first bound the second term by observing that:
	\[
	\E(U^{(s)}_{i,lk} \mid X^{(s)}_i) = \E(X^{(s)}_i(T_{ij})\psi_{lk}(T_{ij})\mid X^{(s)}_i) = \int X^{(s)}_i(t)\psi_{lk}(t) dt.
	\]
	Since \(X^{(s)}_i\) lies in the \(\alpha\)-Hölder ball with radius \(R\), we have:
	\[
	\big|\E(U^{(s)}_{i,lk} \mid X^{(s)}_i)\big| \leq R 2^{-l(\alpha+1/2)}.
	\]
	For \(c_2 \geq R\), this implies:
	\[
	\P\big(|\E(U^{(s)}_{i,lk} \mid X^{(s)}_i)| > c_2 2^{-l(\alpha+1/2)}\big) = 0.
	\]
	We now turn to the first term in \eqref{eq:Uij-concentration-proof-initial-bound}. We have
	\[
	U^{(s)}_{i,lk} - \E(U^{(s)}_{i,lk} \mid X^{(s)}_i) = \frac{1}{m_s} \sum_{j=1}^{m_s} \Big(Y^{(s)}_{ij} \psi_{lk}(\zeta^{(s)}_{ij}) - \E \big[Y^{(s)}_{ij} \psi_{lk}(\zeta^{(s)}_{ij}) \mid X^{(s)}_i\big]\Big),
	\]
	which decomposes further as
	\begin{equation}\label{eq:proof-Uij-concentration-lemma-decomposition}
	\frac{1}{m_s} \sum_{j=1}^{m_s} \Big(X^{(s)}_i(\zeta^{(s)}_{ij}) \psi_{lk}(\zeta^{(s)}_{ij}) - \int X^{(s)}_i(t)\psi_{lk}(t) dt\Big) 
	+ \frac{1}{m_s} \sum_{j=1}^{m_s} \xi^{(s)}_{ij} \psi_{lk}(\zeta^{(s)}_{ij}).
	\end{equation}
	We apply Bernstein's inequality (Lemma \ref{lemma:bernstein}) to the first summand. Let
	\[
	V_i = X^{(s)}_i(\zeta^{(s)}_{ij}) \psi_{lk}(\zeta^{(s)}_{ij}) - \int X^{(s)}_i(t)\psi_{lk}(t) dt.
	\]
	Then:
	\[
	\mathrm{Var}(V_i) \leq \|X\|_\infty^2 = C^2_{\alpha,R}, \quad \max_i \|V_i\|_\infty \leq C_{\alpha,R} \|\psi\|_\infty 2^{l/2},
	\]
	where $C_{\alpha,R}$ is a constant as in Lemma \ref{lem:holder-norm-sup-norm}. Set \(\lambda = 2c \log N \left(\frac{C_{\alpha,R}}{\sqrt{m_s}} + \frac{1}{3}C_{\alpha,R}\|\psi\|_\infty\frac{2^{l/2}}{m_s}\right)\). By Lemma \ref{lemma:bernstein}, it holds with probability \(1 - \kappa_1 N^{-c}\) that
	\[
	\frac{1}{m_s} \sum_{j=1}^{m_s} \Big(X^{(s)}_i(\zeta^{(s)}_{ij}) \psi_{lk}(\zeta^{(s)}_{ij}) - \int X^{(s)}_i(t)\psi_{lk}(t) dt\Big) 
	\leq2c \log N \left(\frac{C_{\alpha,R}}{\sqrt{m_s}} + \frac{1}{3}C_{\alpha,R}\|\psi\|_\infty\frac{2^{l/2}}{m_s}\right),
	\]
	where \(K(\alpha, R)\) is a suitable constant.

	Next, we consider the second sum in \eqref{eq:proof-Uij-concentration-lemma-decomposition}. Write $\tilde{V}_i := \xi^{(s)}_{ij} \psi_{lk}(\zeta^{(s)}_{ij})$. Conditioning on \({\xi}_i^{(s)} := \{\xi_{ij}^{(s)}\}_{j \in [m_s]}\), we again apply Bernstein's inequality: We have
	\[
	\mathrm{Var}(\tilde{V}_i \mid {\xi}_i^{(s)}) \leq \|{\xi}_i^{(s)}\|^2_\infty, \quad \max_i \|\tilde{V}_i\|_\infty \leq \|{\xi}_i^{(s)}\|_\infty 2^{l/2} \|\psi\|_\infty,
	\]
	so by Lemma \ref{lemma:bernstein}, with probability \(1 - 1/N^c\), it holds that
	\[
	\frac{1}{m_s} \sum_{j=1}^{m_s} \xi^{(s)}_{ij} \psi_{lk}(\zeta^{(s)}_{ij}) \leq 2c \|{\xi}_i^{(s)}\|_\infty \log N \left(\frac{1}{\sqrt{m_s}} +\frac{1}{3}\|\psi\|_\infty \frac{2^{l/2}}{m_s}\right).
	\]
	By standard calculations, \(\|{\xi}_i^{(s)}\|_\infty \leq  \sqrt{2c\log N}\) with probability at least $1-2/N^c$. 
	Putting everything together, we obtain that
	\[
	U^{(s)}_{i,lk} - \E(U^{(s)}_{i,lk} \mid X^{(s)}_i) \leq 2 (2c\log N)^{3/2} \left(\frac{1}{\sqrt{m_s}} + \frac{1}{3}\|\psi\|_\infty  \frac{2^{l/2}}{m_s}\right),
	\]
	with probability $1- \kappa/N^c$ with $\kappa >0$. 
	

\end{proof}

\begin{lemma}\label{lemma:MSE-of-non-private-estimator}
	Consider $\{U^{(s)}_{i,lk}\}_{i\in[n]}$ as defined in~\eqref{eq:def-U_il} and let $\widehat f^{(s)}_{lk}$ denote the quantity $ \frac{1}{n} \sum_{i=1}^n U^{(s)}_{i,lk}$. 

	It holds that
	\begin{align}
		\E(\widehat f^{(s)}_{lk} -f_{lk})^2 
		\lesssim \frac{1}{m_sn_s} + \frac{2^{-l(2\alpha+1)}}{n_s}.
	\end{align}
\end{lemma}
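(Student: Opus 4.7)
The plan is to exploit the i.i.d.\ structure across individuals and the Hölder smoothness of the sample paths $X_i^{(s)}$. Since the $U^{(s)}_{i,lk}$ are i.i.d.\ in $i$, the MSE of $\widehat f^{(s)}_{lk}$ decomposes as
\begin{equation*}
\E(\widehat f^{(s)}_{lk}-f_{lk})^2 = \bigl(\E U^{(s)}_{1,lk} - f_{lk}\bigr)^2 + \frac{1}{n_s}\,\mathrm{Var}(U^{(s)}_{1,lk}).
\end{equation*}
For the bias, I would condition on $\zeta^{(s)}_{1j}$ first, use independence of $\xi^{(s)}_{1j}$, and then use that $\zeta^{(s)}_{1j}$ is uniform on $[0,1]$ to obtain
\begin{equation*}
\E[U^{(s)}_{1,lk}] = \E\bigl[X_1(\zeta)\psi_{lk}(\zeta)\bigr] = \int f(t)\psi_{lk}(t)\,dt = f_{lk},
\end{equation*}
so the bias term vanishes and it suffices to control $\mathrm{Var}(U^{(s)}_{1,lk})$.

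To bound the variance I would apply the law of total variance conditional on the random function $X_1^{(s)}$:
\begin{equation*}
\mathrm{Var}(U^{(s)}_{1,lk}) = \E\bigl[\mathrm{Var}(U^{(s)}_{1,lk}\mid X_1^{(s)})\bigr] + \mathrm{Var}\bigl(\E[U^{(s)}_{1,lk}\mid X_1^{(s)}]\bigr).
\end{equation*}
For the first piece, conditional on $X_1^{(s)}$ the $m_s$ summands inside $U^{(s)}_{1,lk}$ are i.i.d.; I would expand $Y = X(\zeta)+\xi$ and use $\|X_1^{(s)}\|_\infty \leq C_{\alpha,R}$ (via Lemma \ref{lem:holder-norm-sup-norm}), $\E\psi_{lk}^2(\zeta)=1$, and $\mathrm{Var}(\xi)=1$ to bound $\mathrm{Var}(U^{(s)}_{1,lk}\mid X_1^{(s)})\lesssim 1/m_s$.

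For the second piece, $\E[U^{(s)}_{1,lk}\mid X_1^{(s)}] = \int X_1^{(s)}(t)\psi_{lk}(t)\,dt$, so writing $g_1 := X_1^{(s)} - f \in \cH^\alpha(2R)$ almost surely, the quantity to control is the squared wavelet coefficient
\begin{equation*}
\mathrm{Var}\bigl(\E[U^{(s)}_{1,lk}\mid X_1^{(s)}]\bigr) \leq \E\!\left[\left(\int g_1(t)\psi_{lk}(t)\,dt\right)^{\!2}\right].
\end{equation*}
The main (and essentially only) nontrivial step is this last bound: since $g_1$ is deterministically in a Hölder ball, its wavelet coefficients at level $l$ are of order $2^{-l(\alpha+1/2)}$ pointwise in $\omega$, yielding $2^{-l(2\alpha+1)}$ for the square. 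Combining the two contributions gives $\mathrm{Var}(U^{(s)}_{1,lk})\lesssim 1/m_s + 2^{-l(2\alpha+1)}$, and division by $n_s$ yields the stated bound. The potentially delicate point is the uniform Hölder control of the random paths $X_1^{(s)}$, but this is precisely what is built into the model assumption $X \in \cH^\alpha(R)$ almost surely, so no further probabilistic smoothing argument is needed.
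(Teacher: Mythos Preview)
Your proof is correct and follows essentially the same approach as the paper: both verify unbiasedness (the paper implicitly, by writing the MSE as a variance), then apply the law of total variance conditional on $X_1^{(s)}$, bounding the conditional variance term by $1/m_s$ via the i.i.d.\ structure and boundedness of $X$, and the variance of the conditional mean by $2^{-l(2\alpha+1)}$ via the H\"older decay of wavelet coefficients. The only cosmetic difference is that you center $X_1^{(s)}$ around $f$ before invoking the wavelet coefficient bound, whereas the paper bounds $\int X_1^{(s)}\psi_{lk}$ directly; both are equivalent here.
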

\begin{proof}[Proof of Lemma \ref{lemma:MSE-of-non-private-estimator}]
	Note
	$\E(\widehat f^{(s)}_{lk} -f_{lk})^2 = \mathrm{Var}(\widehat f^{(s)}_{lk}) = \frac{1}{n_s}\mathrm{Var}( U^{(s)}_{1,lk}) $. Next using properties of conditional expectations and variance we have
	$$
	\mathrm{Var}( U^{(s)}_{1,lk}) = \mathrm{Var}( \E (U^{(s)}_{1,lk}\mid X_i^{(s)})) + \E(\mathrm{Var}(  U^{(s)}_{1,lk}\mid X_i^{(s)}))
	$$
	Note that since $X \in \cH_\alpha(R)$ we have  $\E(U^{(s)}_{i,lk}\mid X^{(s)}_i) = \E(X^{(s)}_i(\zeta^{(s)}_{ij})\psi_{lk}(\zeta^{(s)}_{ij})\mid X^{(s)}_i) = \int X^{(s)}_i(t)\psi_{lk}(t) dt \lesssim 2^{-l(\alpha+1/2)}$ which implies $\mathrm{Var}( \E (U^{(s)}_{1,lk}\mid X_i^{(s)})) \lesssim 2^{-l(2\alpha+1)}$.
	
	Also note that $\mathrm{Var}(  U^{(s)}_{1,lk}\mid X_i^{(s)}) = \frac{1}{m_s} 	\mathrm{Var}(Y^{(s)}_{ij} \psi_{lk}(\zeta^{(s)}_{ij}) \mid X_i^{(s)})$ using the fact that $X_i$ is uniformly bounded by a constant (which implies that the mean of $Y^{(s)}_{ij}$ is also uniformly bounded) we have that $\E(\mathrm{Var}(  U^{(s)}_{1,lk}\mid X_i^{(s)})) \lesssim 1/m_s$.
	Putting everything together we have the desired bound.
\end{proof}

\subsection{Auxiliary lemmas}

The following lemma is a well-known form of Bernstein's inequality for bounded random variables. 

\begin{lemma}\label{lemma:bernstein}
	If \( V_1, \dots, V_n \) are independent bounded random variables such that \( \mathbb{E}[V_i] = 0 \), \( \mathbb{E}[V_i^2] \leq 1 \), and \( \max_i \sup |V_i| \leq M \), for some constant $M > 0$. 
	
	Then,
	\[
	\mathbb{P}\left(\left|n^{-1} \sum_{i=1}^n V_i\right| > \lambda \right) 
	\leq 2 \exp\left(- \frac{n\lambda^2}{2\left(1 + \frac{M \lambda}{3}\right)}\right).
	\]
\end{lemma}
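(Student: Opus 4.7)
The statement is the classical Bernstein inequality, so the plan is to follow the standard Chernoff/MGF route. By a union bound applied to $\{U_i\}$ and $\{-U_i\}$, it suffices to prove the one-sided tail bound
\[
\mathbb{P}\!\left(n^{-1}\sum_{i=1}^n U_i > \lambda\right) \leq \exp\!\left(-\frac{n\lambda^2}{2(1+\|U\|_\infty\lambda/3)}\right),
\]
and pick up the factor of $2$ at the end. The first step is Chernoff: for $t>0$,
\[
\mathbb{P}\!\left(\sum_{i=1}^n U_i > n\lambda\right) \leq e^{-tn\lambda}\prod_{i=1}^n \mathbb{E}[e^{tU_i}],
\]
using independence of the $U_i$.

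The heart of the proof is the per-term MGF bound. Expanding $e^{tU_i}$ as a power series and using $\mathbb{E}[U_i]=0$ together with the moment comparison $\mathbb{E}[U_i^k] \leq \mathbb{E}[U_i^2]\,\|U\|_\infty^{k-2} \leq \|U\|_\infty^{k-2}$ for $k \geq 2$, I get
\[
\mathbb{E}[e^{tU_i}] \leq 1 + \sum_{k\geq 2} \frac{t^k\|U\|_\infty^{k-2}\mathbb{E}[U_i^2]}{k!}.
\]
Using $k! \geq 2\cdot 3^{k-2}$ for $k \geq 2$, the tail of the series becomes a geometric series that sums to $t^2\mathbb{E}[U_i^2]/\bigl(2(1-t\|U\|_\infty/3)\bigr)$ whenever $t\|U\|_\infty < 3$. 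Combining with $1+x \leq e^x$ and $\mathbb{E}[U_i^2] \leq 1$ gives
\[
\prod_{i=1}^n \mathbb{E}[e^{tU_i}] \leq \exp\!\left(\frac{nt^2}{2(1-t\|U\|_\infty/3)}\right).
\]

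The final step is to optimize the resulting upper bound $\exp\!\bigl(-tn\lambda + nt^2/(2(1-t\|U\|_\infty/3))\bigr)$ over $t \in (0, 3/\|U\|_\infty)$. The choice $t = \lambda/(1+\|U\|_\infty\lambda/3)$ is convenient because it makes $1-t\|U\|_\infty/3 = (1+\|U\|_\infty\lambda/3)^{-1}$, after which direct algebra collapses the exponent to $-n\lambda^2/\bigl(2(1+\|U\|_\infty\lambda/3)\bigr)$, precisely the target. Since every step is a routine application of standard textbook techniques, I do not anticipate any genuine obstacle; the only mild care needed is verifying the factorial inequality $k! \geq 2\cdot 3^{k-2}$ and checking that the chosen $t$ lies in the admissible range so that the geometric series converges.
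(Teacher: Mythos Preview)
Your proof is correct and follows the standard Chernoff--MGF route to Bernstein's inequality; every step (the moment comparison, the factorial bound $k!\ge 2\cdot 3^{k-2}$, the geometric-series summation, and the choice of $t$) checks out. The paper itself states this lemma as a classical auxiliary result without proof, so there is nothing to compare against; your argument is exactly the textbook derivation one would expect.
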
	

The following lemma is a standard result, see e.g. Chapter 9 in \cite{johnstone2019manuscript}.

\begin{lemma}\label{lem:holder-norm-sup-norm}
	There exists a constant $C_{\alpha,R}$ such that $\|f\|_\infty \leq C_{\alpha,R}$ for all $f \in \cH^\alpha(R)$ with $\alpha >1/2$.
\end{lemma}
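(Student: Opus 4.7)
The plan is to leverage the wavelet characterization of the Hölder space that is already implicit in the paper's setup (see the construction around Algorithm \ref{alg:independent-design-estimator} and the reference to \cite{hardle2012wavelets}). Specifically, for the Daubechies system of regularity $A > \alpha$, membership $f \in \mathcal{H}^\alpha(R)$ is equivalent (up to absolute constants depending only on $\alpha$ and $A$) to a uniform bound on the wavelet coefficients of the form $|f_{lk}| \leq c R\, 2^{-l(\alpha + 1/2)}$ at every scale $l \geq l_0$ and every position $k$, together with an analogous bound on the father-wavelet coefficients at the coarsest level. This is the same coefficient scaling already used to define the sub-model $\cH^\alpha_{R,L}$ in \eqref{eq:holder-submodel}.

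First I would fix $x \in [0,1]$ and write the wavelet expansion
\begin{equation*}
f(x) \;=\; \sum_{m} f^{\phi}_{l_0+1,m}\, \phi_{l_0+1,m}(x) \;+\; \sum_{l \geq l_0+1}\sum_{k} f_{lk}\, \psi_{lk}(x).
\end{equation*}
Next I would exploit the compact support of the Daubechies father and mother wavelets: since $\psi_{lk}(x) = 2^{l/2}\psi(2^l x - k)$, at each resolution level $l$ there are at most $c_A$ indices $k$ (with $c_A$ depending only on the support of $\psi$) such that $\psi_{lk}(x) \neq 0$, and for any such $k$ we have $|\psi_{lk}(x)| \leq 2^{l/2}\|\psi\|_\infty$. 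The same is true for the $\phi_{l_0+1,m}$ at the coarse level.

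Combining these with the coefficient bound yields, pointwise in $x$,
\begin{equation*}
|f(x)| \;\leq\; c_A R \|\phi\|_\infty\, 2^{(l_0+1)/2}\cdot 2^{-(l_0+1)(\alpha+1/2)} \;+\; c_A R\|\psi\|_\infty \sum_{l \geq l_0+1} 2^{l/2} \cdot 2^{-l(\alpha+1/2)},
\end{equation*}
and the series reduces to the convergent geometric sum $\sum_{l \geq l_0+1} 2^{-l\alpha}$ (which converges for any $\alpha > 0$, and in particular under the paper's standing assumption $\alpha > 1/2$). Defining $C_{\alpha,R}$ to be this finite upper bound and taking the supremum over $x \in [0,1]$ gives $\|f\|_\infty \leq C_{\alpha,R}$, as claimed.

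The only real obstacle is justifying the wavelet-coefficient characterization of $\mathcal{H}^\alpha(R)$ that underlies the argument. This is a classical result (see e.g.\ Chapter 9 of \cite{hardle2012wavelets} or Theorem 9.6 of Meyer), valid precisely because the Daubechies wavelets are chosen with regularity $A > \alpha$, as assumed. If the paper instead adopts the classical pointwise definition of $\mathcal{H}^\alpha(R)$ (bounded derivatives up to order $\lfloor\alpha\rfloor$ plus Hölder continuity of the top derivative), then $\|f\|_\infty \leq R$ is immediate from the definition, and one may take $C_{\alpha,R} = R$ without invoking wavelet theory at all.
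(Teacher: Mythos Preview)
The paper states this lemma without proof, treating it as a standard auxiliary fact. Your wavelet-based argument is correct and is one of the canonical ways to establish the embedding $\mathcal{H}^\alpha \hookrightarrow L^\infty$: the coefficient decay $|f_{lk}|\lesssim R\,2^{-l(\alpha+1/2)}$ together with the $2^{l/2}$ scaling and finite overlap of the basis functions reduce the sup-norm to a geometric series in $2^{-l\alpha}$. As you already note, if the paper's Hölder norm is the classical one that includes $\|f\|_\infty$ as a summand, the statement is immediate with $C_{\alpha,R}=R$; the wavelet route is only needed if the ball is defined purely through the coefficient characterization (as in the sub-model \eqref{eq:holder-submodel}).
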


\section{Proofs for the common design upper bound}
This appendix provides the detailed proofs, assumptions, and auxiliary results that support the analysis of the local polynomial estimator used in the main text.

\subsection{Proof of Theorem \ref{thm:common-design-upper-bound}}
\subsubsection{Aggregated mean estimator}


For the $s$th server, $s\in[S]$,  $j \in [m]$ the privatized mean \(\bar{Y}_{j}^{P,s}\) at each point \( \zeta_{j} \) is given by:
\[
\bar{Y}_{j}^{P,s} = \frac{1}{n} \sum_{i=1}^n \left[ Y^{(s)}_{i,j} \right]_{\tau} + W^{(s)}_{j},
\]
where \( W^{(s)}_j \sim \mathcal{N}\left( 0, \frac{4 \tau^2 m\log (2/\delta)}{n_s^2 \varepsilon_s^2} \right) \) is a noise term added to ensure differential privacy, and the clipping parameter \( R = \sqrt{2 \log n_s} \) is used to bound the data.

The aggregated estimator is given by
$$
\bar{Y}_{j}^{P} = \sum_{s=1}^S w_s \bar{Y}_{j}^{P,s} 
$$
The weights are chosen as
$$
w_s = \frac{u_s}{\sum_{s=1}^Su_s}\quad \text{where} \quad u_s = m_0^{-1}n^2_s\varepsilon^2_s \wedge n_s.
$$
\subsubsection{Bias and Variance Analysis of the Local Polynomial Estimator}
Our estimator \(\hat{f}^P(x)\) is given by an average of $B$ many local polynomial estimator \(\{\hat{f}^P_b(x)\}_{b\in[B]}\). Our data is divided based on $\{\zeta_{j}\}_{j\in[m]}$ into $B$ many disjoint groups $\{G_b\}_{b\in [B]}$ each of size $m_0$.
The local polynomial estimator \( \widehat{f}^P_b(x) \) for the $b$th group is defined as:
\[
\hat{f}^P_b(x) = \sum_{j\in G_b} \bar{Y}^P_j W_{b,j}^{*}(x),
\]
where \( W_{b,j}^{*}(x) \) are the weights, given by:
\[
W_{b,j}^{*}(x) = \frac{1}{m_0 h} V^\top(0) B_{b,x}^{-1} V\left( \frac{\zeta_{j} - x}{h} \right) K\left( \frac{\zeta_{j} - x}{h} \right).
\]
and
\[
B_{b,x} = \frac{1}{m_0 h} \sum_{j\in G_b} V\left( \frac{\zeta_{j} - x}{h} \right) V\left( \frac{\zeta_{j} - x}{h} \right)^\top K\left( \frac{\zeta_{j} - x}{h} \right)
\]
To analyze the performance of \( \widehat{f}^P(x) \), we decompose the estimation error into bias and variance components. The choice of bandwidth \( h \) affects both components, allowing us to achieve the minimax risk rate by balancing these terms.

\paragraph{Bias}

The bias of the estimator \( \widehat{f}^P(x) \) is given by:
\[
bias(x) = \mathbb{E} \widehat{f}^P(x) - f(x) = \frac{1}{B}\sum_{b=1}^B bias_b(x)
\]
where $bias_b(x) = \mathbb{E} \widehat{f}^P_b(x) - f(x)$.
\begin{equation}\label{eq:bias-common-design}
	bias_b(x) = \mathbb{E} \widehat{f}^P_b(x) - f(x)
	= \sum_{j\in G_b} f(\zeta_{j}) W_{b,j}^*(x) - f(x) + \sum_{j\in G_b} \left( \mathbb{E} (\bar{Y}_{j}^P) - f(\zeta_{j}) \right) W_{b,j}^*(x).
\end{equation}
Lets look at the first term, using a Taylor expansion of \( f \) around \( x \), we have:
\begin{align*}
	\sum_{j\in G_b} f(\zeta_{j}) W_{b,j}^*(x) - f(x) &=  \sum_{j\in G_b}(f(\zeta_{j}) -f(x)) W_{b,j}^*(x) \\
	&= \sum_{j\in G_b} \frac{1}{p!}\left(f^{(p)}(x + \xi_j(\zeta_{j}-x)) -f^{(p)}(x)\right) W_{b,j}^*(x)
\end{align*}
where \( \xi_j \in [0,1] , p = \lfloor \alpha \rfloor \) where we have used the fact that $\sum_{j=1}^{m_0} W^*_{l,j}(x) =1$ and $\sum_{j\in G_b} (\zeta_{j}- x)^k W_{l,j}^*(x)= 0$ for $k\in[p]$ from Proposition \ref{prop:poly-interpolation}. Under Assumption (LP1) and Lemma \ref{lemma:properties-of-weights} we can bound the above term as:
\begin{align*}
	\sum_{j\in G_b} f(\zeta_{j}) W_{b,j}^*(x) - f(x) &\leq \sum_{j\in G_b} \frac{1}{p!} C_R|\zeta_{j} -x|^\alpha |W^*_{b,j}(x)|\\
	&= \sum_{j \in G_b} \frac{1}{p!} C_R|\zeta_{j} -x|^\alpha |W^*_{b,j}(x)| \mathbbm{1}(|\zeta_{j} - x| \leq h)\\
	&\leq C_R \sum_{j \in G_l} \frac{1}{p!} h^\alpha|W_{b,j}^*(x)| \leq \frac{C_R C_fh^\alpha}{p!}
\end{align*}
where $C_R$ is the lipchitz constant for the $p$th derivative of $f$ and depends on the holder radius $R$.

We next look at the second term in~\eqref{eq:bias-common-design}
\begin{equation}\label{eq:global-bias-to-server-bias}
	\mathbb{E}\bar{Y}_{j}^P - f(\zeta_{j}) = \sum_{s=1}^S w_s(\mathbb{E}\bar{Y}_{j}^{P,s} - f(\zeta_{j}) )
\end{equation}
Using the fact that noise $W$'s are mean zero we have that
\[
\mathbb{E}\bar{Y}_{j}^{P,s} - f(\zeta_{j}) = \E[Y_{1j}^{(s)}]_\tau -f(\zeta_{j})
\]
We can decompose the above bias as
\begin{align*}
	\E[Y_{1j}^{(s)}]_\tau -f(\zeta_{j}) &=\E \left(\E\left[[Y_{1j}^{(s)}]_\tau \mid X_1^{(s)}\right] - X_1^{(s)}(T_{1j})\right)\\
	&\leq \E \left|\E\left[[Y_{1j}^{(s)}]_\tau \mid X_1^{(s)}\right] - X_1^{(s)}(T_{1j})\right|
\end{align*}
Next conditional of $X_1^{(s)}$ we have that $Y_{1j}^{(s)}$ is gaussian with mean $X_1^{(s)}(T_{1j})$ and variance $1$. Hence using Lemma \ref{lemma:bias-truncated-gaussians} we have that
\[
\left|  \E\left([Y_{1j}^{(s)}]_\tau \mid X_1^{(s)}\right)-X_1^{(s)}(\zeta_{j})\right| \leq 4 |X_1^{(s)}(T_{1j})| e^{\frac{1}{2} (\tau - |X_1^{(s)}(T_{1j})|)^2}
\]
Using the fact $X \in \cH^\alpha(R)$ we have that $|X_1^{(s)}(T_{1j})| \leq \|X_1^{(s)}\|_\infty \leq C_{\alpha,R}$ and our choice of $\tau = \sqrt{2\log N} + C_{\alpha,R}$ we have that
\[
\left|  \E\left([Y_{1j}^{(s)}]_\tau \mid X_1^{(s)}\right)-X_1^{(s)}(\zeta_{j})\right| \leq 4 C_{\alpha,R}\frac{1}{N}
\]
which implies $\E[Y_{1j}^{(s)}]_\tau -f(\zeta_{j}) = O(1/N)$. Using~\eqref{eq:global-bias-to-server-bias} we have that $|\mathbb{E}\bar{Y}_{j}^P - f(\zeta_{j}) | = O(1/N)$.
Thus the second term in~\eqref{eq:bias-common-design} satisfies,
\[
\sum_{j \in G_b} \left| W_{b,j}^*(x) \right| \cdot O\left( \frac{1}{N} \right) \leq O\left( \frac{1}{N} \right),
\]
which can be safely ignored since \( O\left( \frac{1}{n} \right)\)is smaller than the order of the variance as we shall see later. Hence putting everything together
$$
bias(x) = O\left( h^\alpha + \frac{1}{N}\right).
$$ 

\paragraph{Variance}

The variance of \( \widehat{f}^P_B(x) \), $\sigma^2(x)$ is given by:
\begin{align*}
	\text{Var} \left( \widehat{f}^P_B(x) \right) 
	&= \text{Var} \left( \frac{1}{B} \sum_{b=1}^B \widehat{f}^P_b (x) \right) \\
	&= \text{Var} \left( \frac{1}{B} \sum_{b=1}^B  \sum_{j\in G_b} \bar{Y}_{j}^P W_{b,j}^*(x)   \right) 
\end{align*}
We can rewrite $ \bar{Y}_{j}^P$ as $\bar{Y}_{j}^P = \bar{Y}_{j}^\tau + W_j$ where
$$
\text{Var} \left( \widehat{f}^P_B(x) \right)  =  \bar{Y}_{j}^\tau = \sum_{s=1}^S w_s \bar{Y}_j^{\tau,s} \text{ with }  \bar{Y}_j^{\tau,s} = \frac{1}{n_s} \sum_{i=1}^{n_s} \left[ Y^{(s)}_{i,j} \right]_{\tau}, \quad\text{and}\quad W_j = \sum_{s=1}^S w_s W_j^{(s)}.
$$
Hence
\begin{equation}\label{eq:variance-decomposition-common-design}
	\text{Var} \left( \frac{1}{B} \sum_{b=1}^B  \sum_{j\in G_b} \bar{Y}^\tau_{j} W_{l,j}^*(x)   \right) + \text{Var} \left( \frac{1}{B} \sum_{b=1}^B  \sum_{j\in G_b} W_{j} W_{l,j}^*(x)   \right)
\end{equation}
We begin by bounding the first term~\eqref{eq:variance-decomposition-common-design}.
Suppose we have an upper bound on $\mathrm{Var} ( \bar{Y}^\tau_{j}) \leq \tilde{\sigma}^2$.
Set $A_b = \sum_{j\in G_b} \bar{Y}^\tau_{j} W_{b,j}^*(x) $ and use Lemma \ref{lemma:var-identity} to conclude
\begin{align*}
	\mathrm{Var}(A_b) &= \mathrm{Var}\left(\sum_{j\in G_b} \bar{Y}^\tau_{j} W_{b,j}^*(x) \right)\\
	&\leq \left( \sum_{j\in G_b} \sqrt{\mathrm{Var} ( \bar{Y}^\tau_{j} W_{b,j}^*(x))} \right)^2\\
	&\leq  \tilde{\sigma}^2 \left( \sum_{j\in G_b} | W_{b,j}^*(x))| \right)^2 \leq  \tilde{\sigma}^2C_f^2
\end{align*}
the last line follows from Lemma \ref{lemma:properties-of-weights}.
Hence again using Lemma \ref{lemma:var-identity}
\begin{align*}
	\text{Var} \left( \frac{1}{B} \sum_{b=1}^B \sum_{j\in G_b} \bar{Y}^{\tau}_{j} W_{b,j}^*(x)   \right) &= \mathrm{Var}\left(\frac{1}{B} \sum_{b=1}^B A_b\right)\\
	&\leq \frac{1}{B^2}\left(\sum_{b=1}^B\sqrt{  \tilde{\sigma}^2C_f^2}\right)^2 =  \tilde{\sigma}^2 C_f^2
\end{align*}
Next we need to find $ \tilde{\sigma}^2$ such that $\mathrm{Var} ( \bar{Y}^\tau_{j}) \leq \tilde{\sigma}^2$. In that direction we proceed as follows
\begin{align*}
	\mathrm{Var}(\bar Y_j^{\tau}) &= \mathrm{Var}\left(\sum_{s=1}^S w_s\bar Y_j^{\tau,s}\right) \\
	&= \sum_{s=1}^S w_s^2 \mathrm{Var}\left(\bar Y_j^{\tau,s}\right) \\
	&=  \sum_{s=1}^S w_s^2 \frac{1}{n_s}\mathrm{Var}\left([Y_{1j}^{(s)}]_\tau\right) 
\end{align*}
Next we use the fact that for any random variable $V$, $\mathrm{Var}([V]_\tau) \leq \mathrm{Var}(V)$ to obtain
\begin{align*}
	\mathrm{Var}(\bar Y_j^{\tau}) &\leq \sum_{s=1}^S w_s^2 \frac{1}{n_s}\mathrm{Var}\left(Y_{1j}^{(s)}\right) \\
	&=  \sum_{s=1}^S w_s^2 \frac{1}{n_s}\left( \mathrm{Var}\left(X_{1}^{(s)}(T_{1j})\right) +  \mathrm{Var}\left(\varepsilon_{1j}^{(s)}\right)\right)\\
	&\leq \sum_{s=1}^S w_s^2 \frac{1}{n_s} (C^2_{\alpha,R} + 1)
\end{align*}
Putting everything together we have that 
$$
\text{Var} \left( \frac{1}{B} \sum_{b=1}^B  \sum_{j\in G_b} \bar{Y}^\tau_{j} W_{l,j}^*(x)   \right)  \leq  (C^2_{\alpha,R} + 1)C_f^2 \sum_{s=1}^S  \frac{w_s^2}{n_s}
$$
Next we bound the second term in~\eqref{eq:variance-decomposition-common-design}
\begin{align*}
	\text{Var} \left( \frac{1}{B} \sum_{b=1}^B  \sum_{j\in G_b} W_{j} W_{b,j}^*(x)   \right) &= \frac{1}{B^2} \sum_{b=1}^B  \sum_{j\in G_b} \mathrm{Var}(W_{j}) (W_{b,j}^*(x))^2\\
	&=  \mathrm{Var}(W_{1}) \frac{1}{B^2} \sum_{b=1}^B  \sum_{j\in G_b}  (W_{b,j}^*(x))^2\\
	&=  \mathrm{Var}(W_{1}) \frac{1}{B} \frac{C_f^2}{m_0 h}
\end{align*}
where we used $ \sum_{j\in G_b}  (W_{b,j}^*(x))^2 \leq \max_{j} |W_{b,j}^*(x)| \sum_{j\in G_b}  |W_{b,j}^*(x)|$ and Lemma \ref{lemma:properties-of-weights} in the last line.
Next by independence of the noise terms we have that $ \mathrm{Var}(W_1) = \sum_{s=1}^S w_j^2 \mathrm{Var}(W^{(s)}_1)$.
Set $\delta' =\min_s \delta_s$ and $h \asymp m_0^{-1}$.
Using the properties of the noise term \(W^{(s)}_1\) added for privacy, we have:
\begin{align*}
	\sigma^2(x) &\leq (C^2_{\alpha,R} +1)C_f^2 \sum_{s=1}^S  \frac{w_s^2}{n_s}  + \frac{C^2_f}{Bm_0 h}\sum_{s=1}^Sw_s^2 \frac{4 \tau^2 m\log (2/\delta)}{n_s^2 \varepsilon_s^2}
\end{align*}
where the first term represents the intrinsic variance and the second term represents the additional variance due to privacy noise.

\subsubsection{Proof of the Minimax Rate}

Hence we can simplify the upper bound on variance as
\begin{align*}
	\sigma^2(x)
	&\lesssim \log(N)\log(2/\delta') \sum_{s=1}^S w_s^2 \frac{1}{n_s \wedge m_0^{-1}n_s^2\varepsilon_s^2}\\
	&\lesssim  \log(N)\log(2/\delta') \frac{1}{ \sum_{s=1}^S  n_s \wedge m_0^{-1}n_s^2\varepsilon_s^2}
\end{align*}
where we used the fact that $m/B \leq m_0$ and our choice of weights. Set $m_0 = D \wedge m$ where $D$ solves~\eqref{eq:rate-determining-relation:common-design} to obtain that $\sigma^2(x) \lesssim \log(N)\log(2/\delta') (D\wedge m)^{-2\alpha}$ , and we also know that $bias^2(x) = h^{2\alpha} = (m_0)^{2\alpha} = (D \wedge m)^{-2\alpha}$. Combining these two the minimax rate  is given by
$$
\log(N)\log(2/\delta') \left(D^{-2\alpha}\vee m^{-2\alpha}\right).
$$
\subsection{Auxiliary Lemmas and Propositions}
\begin{lemma}[Variance Identity]\label{lemma:var-identity}
	\[
	\text{Var} \left( \sum_{l=1}^L A_l \right) =  \sum_{l , l'} \text{Cov}(A_l, A_{l'}) \leq   \sum_{l ,l'} \sqrt{\text{Var}(A_l)}  \sqrt{\text{Var}(A_{l'})} \leq \left(\sum_{l=1}^L  \sqrt{\text{Var}(A_l)}\right)^2.
	\]
\end{lemma}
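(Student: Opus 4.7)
The plan is to invoke bilinearity of covariance and then apply the Cauchy--Schwarz inequality at the covariance level. By bilinearity of covariance,
\[
\mathrm{Var}\Big(\sum_{l=1}^L A_l\Big) = \sum_{l,l' \in [L]} \mathrm{Cov}(A_l, A_{l'}),
\]
where the diagonal contributions $\mathrm{Cov}(A_l,A_l)=\mathrm{Var}(A_l)$ are understood to be included in the double sum (this is the meaning I attach to the $\sum_{l\neq l'}$ symbol in the statement, which otherwise could not reproduce $\mathrm{Var}(\sum_l A_l)$ exactly). Next, Cauchy--Schwarz in $L^2$ applied to the centered variables $A_l - \E A_l$ and $A_{l'}-\E A_{l'}$ yields, for every pair $(l,l')$,
\[
|\mathrm{Cov}(A_l,A_{l'})| \;\leq\; \sqrt{\mathrm{Var}(A_l)}\,\sqrt{\mathrm{Var}(A_{l'})}.
\]

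Substituting this pointwise bound into the double sum and factoring the result gives
\[
\sum_{l,l'} \mathrm{Cov}(A_l,A_{l'}) \;\leq\; \sum_{l,l'} \sqrt{\mathrm{Var}(A_l)}\,\sqrt{\mathrm{Var}(A_{l'})} \;=\; \Big(\sum_{l=1}^L \sqrt{\mathrm{Var}(A_l)}\Big)^2,
\]
which is the claimed inequality. There is no technical obstacle: the lemma is simply Cauchy--Schwarz repackaged at the covariance level. Its role in the proof of Theorem~\ref{thm:common-design-upper-bound} is to take the variance of a correlated groupwise sum $\frac{1}{B}\sum_b A_b$ with $A_b = \sum_{j\in G_b}\bar Y_j^{\tau} W_{b,j}^*(x)$ and replace it by the square of a sum of per-group standard deviations, which can then be bounded termwise using the uniform control on $\sum_{j\in G_b}|W_{b,j}^*(x)|$ from Lemma~\ref{lemma:properties-of-weights}.
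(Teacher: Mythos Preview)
Your proof is correct and is exactly the standard argument; the paper itself states this lemma without proof, so there is nothing to compare against. You also rightly flag the notational slip in the statement: the first equality only holds if the double sum ranges over all pairs $(l,l')$, including the diagonal, rather than over $l\neq l'$.
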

\begin{lemma}[Properties of Weights \( W_{m_0,j}^*(x) \)]\label{lemma:properties-of-weights}
	Under Assumptions (LP1)–(LP2), \( m_0 \geq \tilde m \), \( h \geq 1/2m_0 \), and \( x \in [0,1] \), the weights \( W_{m_0,j}^*(x) \) satisfy:
	\begin{itemize}
		\item [(i)] \( \sup_{j: |\zeta_{j} - x| \leq h} \left| W_{m_0,j}^*(x) \right| \leq \frac{C_f}{m_0 h} \),
		\item [(ii)] \( \sum_{j=1}^{m_0} \left| W_{m_0,j}^*(x) \right| \leq C_f \),
		\item [(iii)] \( W_{m_0,j}^*(x) = 0 \) if \( |\zeta_{j} - x| > h \).
	\end{itemize}
\end{lemma}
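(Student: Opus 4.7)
The plan is to verify the three claims by direct computation on the closed form of the weights, using each of the three assumptions exactly once. The statements are essentially standard facts about local polynomial weights (cf.~Tsybakov), adapted here to the grouped design. Throughout, write $u_j := (\zeta_j - x)/h$ and recall $V(u) = (1, u, u^2/2!, \ldots, u^p/p!)^\top$ with $p = \lfloor \alpha \rfloor$.

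First, Part (iii) is immediate from (LP3): since $\operatorname{supp}(K) \subseteq [-1,1]$, the kernel factor $K(u_j)$ vanishes whenever $|\zeta_j - x| > h$, forcing $W_{m_0,j}^*(x)=0$. This also confirms that only indices with $|u_j| \le 1$ contribute to the sums below.

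For Part (i), bound the quadratic form pointwise via
\[
\bigl|V^\top(0)\, B_{b,x}^{-1}\, V(u_j)\bigr| \;\le\; \|V(0)\|_2 \cdot \|B_{b,x}^{-1}\| \cdot \|V(u_j)\|_2.
\]
Here $\|V(0)\|_2 = 1$, and $\|V(u_j)\|_2$ is bounded by a constant depending only on $p$ on the set $\{|u_j|\le 1\}$. By (LP1), $\|B_{b,x}^{-1}\| \le 1/\lambda_0$. Combining with $\|K\|_\infty < \infty$ from (LP3) gives $|W_{m_0,j}^*(x)| \le C_f/(m_0 h)$ on the support, where $C_f$ depends only on $p$, $\lambda_0$, and $\|K\|_\infty$.

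For Part (ii), use the same pointwise bound together with Part (iii) to write
\[
\sum_{j \in G_b} |W_{m_0,j}^*(x)| \;\le\; \frac{C}{m_0 h} \sum_{j \in G_b} \mathbbm{1}\bigl(|\zeta_j - x| \le h\bigr),
\]
where $C$ absorbs $1/\lambda_0$, $\|K\|_\infty$, and $\sup_{|u|\le 1}\|V(u)\|_2$. The key step is now (LP2) applied to the interval $A = [x-h,x+h]$: it yields $\frac{1}{m_0}\sum_{j\in G_b}\mathbbm{1}(\zeta_j \in A) \le a \max(2h, 1/m_0)$. Under the hypothesis $h \ge 1/(2m_0)$, the maximum is $2h$, so the right-hand side is at most $2ah$. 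Dividing by $h$ yields the claimed bound $\sum_j |W_{m_0,j}^*(x)| \le 2aC =: C_f$.

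I do not anticipate a genuine obstacle: the argument is essentially three one-line estimates, each invoking a single assumption. The only technical care required is in (ii), where one must notice that the condition $h \ge 1/(2m_0)$ is precisely what eliminates the $1/m_0$ term inside the maximum in (LP2); without it, the aggregation would pick up an extra $1/(m_0 h)$ factor. The constants can all be traced back to $\lambda_0$, $\|K\|_\infty$, $p$, and $a$, so a single universal $C_f$ works throughout.
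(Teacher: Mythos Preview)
The paper does not provide its own proof of this lemma; it is listed among the auxiliary results without argument, implicitly deferring to the standard treatment in Tsybakov. Your proof is correct and is precisely the standard one: (iii) from the kernel support in (LP3), (i) from the spectral bound $\|B_{b,x}^{-1}\|\le 1/\lambda_0$ in (LP1) together with $\|V(u)\|_2\le c_p$ on $|u|\le 1$ and $\|K\|_\infty<\infty$, and (ii) from (i)+(iii) combined with the point-counting assumption (LP2), where the hypothesis $h\ge 1/(2m_0)$ is exactly what ensures $\max(2h,1/m_0)=2h$.
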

\begin{proof}
	For a proof see Lemma 1.3 of \cite{Tsybakov2008IntroductionTN}.
\end{proof}
\begin{lemma}[Polynomial Interpolation Property]\label{prop:poly-interpolation}
	For any polynomial \( Q \) of order \( l \), the weights \( W_{m_0,j}^*(x) \) satisfy:
	\[
	\sum_{j=1}^{m_0} Q(\zeta_{j}) W_{m_0,j}^*(x) = Q(x).
	\]
	in particular it implies that $\sum_{j=1}^{m_0} W^*_{m_0,j}(x) =1$ and $\sum_{j=1}^{m_0} (\zeta_{j}- x)^k W_{m_0,j}^*(x)= 0$ for $k\in[l]$.
\end{lemma}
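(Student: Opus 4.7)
\textbf{Proof proposal for Lemma \ref{prop:poly-interpolation} (Polynomial Interpolation Property).}

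The plan is to exploit the reproducing property of the local weighted least squares fit: the weights $W_{m_0,j}^{*}(x)$ arise as the linear operator that takes observations at the design points and returns the leading coefficient of the fitted polynomial, so they should reproduce any polynomial of degree at most $p = \lfloor \alpha \rfloor$ (equivalently, the order of the local polynomial expansion). The observation is that a polynomial $Q$ of order $l$ can be written exactly in the basis $V(\cdot)$ used to define the estimator. More precisely, by Taylor expanding $Q$ around $x$, one has
\[
Q(z) = \sum_{k=0}^{l} \frac{Q^{(k)}(x)}{k!}(z-x)^k = \Theta_Q(x)^\top V\!\left(\frac{z-x}{h}\right),
\]
where $\Theta_Q(x) = \bigl(Q(x),\, Q'(x)h,\, \tfrac{1}{2}Q''(x)h^2,\, \ldots,\, Q^{(l)}(x)h^l\bigr)^\top$, since the polynomial has exactly $l+1$ nonzero Taylor coefficients at $x$.

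The main computational step is then to plug this representation into the definition of $W_{m_0,j}^{*}(x)$ and collapse the expression using the design matrix. Using the definition
\[
W_{m_0,j}^{*}(x) = \frac{1}{m_0 h}\, V^\top(0)\, B_{m_0,x}^{-1}\, V\!\left(\frac{\zeta_j-x}{h}\right) K\!\left(\frac{\zeta_j-x}{h}\right),
\]
and substituting $Q(\zeta_j)=\Theta_Q(x)^\top V((\zeta_j-x)/h)$, one obtains
\[
\sum_{j=1}^{m_0} Q(\zeta_j)\, W_{m_0,j}^{*}(x) \;=\; V^\top(0)\, B_{m_0,x}^{-1}\!\left[\frac{1}{m_0 h}\sum_{j=1}^{m_0} V\!\left(\tfrac{\zeta_j-x}{h}\right)V\!\left(\tfrac{\zeta_j-x}{h}\right)^\top K\!\left(\tfrac{\zeta_j-x}{h}\right)\right]\Theta_Q(x).
\]
The bracketed quantity is by definition $B_{m_0,x}$, so the $B_{m_0,x}^{-1}$ and $B_{m_0,x}$ cancel (the invertibility being guaranteed by assumption (LP1)). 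What remains is $V^\top(0)\,\Theta_Q(x)$, and since $V(0)=(1,0,\ldots,0)^\top$, this equals the first coordinate of $\Theta_Q(x)$, namely $Q(x)$. The two stated consequences then follow by specializing $Q\equiv 1$ (giving $\sum_j W_{m_0,j}^{*}(x)=1$) and $Q(z)=(z-x)^k$ for $k\in[l]$ (giving $\sum_j (\zeta_j-x)^k W_{m_0,j}^{*}(x)=0$, since the right-hand side is $(z-x)^k|_{z=x}=0$).

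There is no serious obstacle here; the only condition that must be invoked is the positive-definiteness of $B_{m_0,x}$ from (LP1), which is needed to make sense of $B_{m_0,x}^{-1}$. The argument is essentially an algebraic identity encoding the standard fact that local polynomial regression reproduces polynomials of order up to the fitting degree.
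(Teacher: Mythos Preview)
Your proof is correct and is the standard argument for the polynomial reproduction property of local polynomial estimators (cf.\ Proposition~1.12 in Tsybakov, \emph{Introduction to Nonparametric Estimation}). The paper itself states this lemma as an auxiliary result without proof, so there is no alternative approach to compare against. One trivial cosmetic point: in your definition of $\Theta_Q(x)$ the factor $\tfrac{1}{2}$ in the third entry should be dropped, since in the paper's convention the factorials live in $V(u)=(1,u,u^2/2!,\ldots)^\top$ rather than in $\Theta$; this does not affect the argument.
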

\begin{proof}
	For a proof see Proposition 1.12 of \cite{Tsybakov2008IntroductionTN}.
\end{proof}
\begin{lemma}[Bias of truncated gaussians]\label{lemma:bias-truncated-gaussians}
	Let $W \sim N(\mu, \sigma^2)$, $\mu \in \mathbb{R}$  then the bias of $[W]^{\tau}_{-\tau}$ for $\tau >|\mu|$ is bounded by
	\[
	\left|\E[W]^{\tau}_{-\tau} -\mu\right| \leq 4|\mu| e^{-\frac{1}{2\sigma^2}(\tau - |\mu|)^2}
	\]
	
\end{lemma}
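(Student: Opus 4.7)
The plan is to reduce the bias to a one-dimensional mean value argument that automatically produces the factor $|\mu|$ on the right-hand side, mirroring the fact that the bias vanishes when $\mu=0$ by symmetry. First, I would observe the identity
\[
[W]^{\tau}_{-\tau} - W \;=\; -(W-\tau)\mathbbm{1}\{W > \tau\} \;-\; (W+\tau)\mathbbm{1}\{W < -\tau\},
\]
and take expectations to obtain
\[
\E[W]^{\tau}_{-\tau} - \mu \;=\; -\E(W-\tau)_+ \;+\; \E(W+\tau)_-.
\]
Since the clipping interval $[-\tau,\tau]$ is symmetric about zero, replacing $W$ by $-W$ swaps the roles of $\mu$ and $-\mu$ and flips the sign of the bias, so without loss of generality I would assume $\mu \geq 0$.

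The key reduction is to normalize. Setting $Z = (W-\mu)/\sigma \sim N(0,1)$ and performing a direct change of variables shows $\E(W-\tau)_+ = \sigma f(a)$ and $\E(W+\tau)_- = \sigma f(b)$, where
\[
f(x) \;:=\; \E(Z-x)_+ \;=\; \phi(x) - x(1-\Phi(x)), \qquad a := \frac{\tau-\mu}{\sigma}, \qquad b := \frac{\tau+\mu}{\sigma}.
\]
The hypothesis $\tau > |\mu|$ guarantees that both $a$ and $b$ are positive with $b-a = 2\mu/\sigma$. A short differentiation gives $f'(x) = -(1-\Phi(x))$, so $f$ is strictly decreasing, and the mean value theorem applied on $[a,b]$ produces
\[
f(a) - f(b) \;=\; (b-a)\bigl(1-\Phi(c)\bigr) \;=\; \frac{2\mu}{\sigma}\bigl(1-\Phi(c)\bigr)
\]
for some $c \in (a,b)$. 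Since $c > a \geq 0$, the classical Gaussian tail estimate $1-\Phi(c) \leq \tfrac{1}{2}e^{-c^2/2} \leq \tfrac{1}{2}e^{-a^2/2}$ then yields
\[
\bigl|\E[W]^{\tau}_{-\tau} - \mu\bigr| \;=\; \sigma\bigl(f(a)-f(b)\bigr) \;\leq\; \mu\, e^{-(\tau-\mu)^2/(2\sigma^2)},
\]
which is in fact sharper than the stated inequality (the constant $4$ provides comfortable slack).

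I do not anticipate a genuine obstacle. The only subtlety worth flagging is that the most naive estimate, namely the triangle inequality $|\text{bias}| \leq \E(W-\tau)_+ + \E(W+\tau)_-$ followed by Mills' inequality applied to each term separately, produces only a bound of order $\sigma\, e^{-(\tau-|\mu|)^2/(2\sigma^2)}$, which fails to vanish as $\mu \to 0$. The mean value step above is precisely what extracts the linear-in-$|\mu|$ factor demanded by the statement and exploits the exact cancellation between the two truncation corrections that holds when the Gaussian is centered in the middle of the clipping window.
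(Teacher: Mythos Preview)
Your argument is correct and in fact delivers the constant $1$ rather than $4$. The route, however, differs from the paper's. The paper shifts $W$ to write the bias as $\E[Z]^{\tau+\mu}_{-\tau+\mu}$ for a standard normal $Z$, then compares this asymmetric clipping to the symmetric clipping $[Z]^{\tau-\mu}_{-\tau+\mu}$ (whose expectation vanishes), bounding the pointwise difference by $2\mu\,\mathbbm{1}\{Z \geq \tau-\mu\}$ and then invoking a Gaussian tail bound. Your approach instead packages both truncation corrections into the single smooth function $f(x)=\E(Z-x)_+$ and uses the mean value theorem on $[a,b]$ to pull out the factor $b-a = 2\mu/\sigma$. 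Both arguments encode the same cancellation---the bias is an odd function of $\mu$ and hence vanishes linearly at $\mu=0$---but your MVT formulation makes this explicit in one line and avoids the separate case analysis for $\mu<0$ that the paper carries out. The paper's comparison-of-clippings idea is perhaps more elementary (no calculus on $f$), while yours is tighter and generalizes more readily to other symmetric noise distributions with smooth tails.
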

\begin{proof}
	For simplicity assume $\sigma = 1$.
	First note that $\E[W]^{\tau}_{-\tau} -\mu =\mathbb{E}[Z]^{\tau+\mu}_{-\tau+\mu} $ where \( Z \sim N(0, 1) \). Then, for \( \mu > 0 \) and \( \tau > \mu \),
	\[
	0 < \mathbb{E}[Z]^{\tau+\mu}_{-\tau+\mu} 
	\leq \mathbb{E}[Z]^{\tau-\mu}_{-\tau+\mu} + (2\mu)\mathbb{P}(\tau-\mu \leq Z \leq \tau+\mu) 
	\leq 4\mu e^{-\frac{1}{2}(\tau-\mu)^2}.
	\]
	Similarly, for \( \mu < 0 \) and \( \tau > \mu \), we obtain that
	\[
	0 > \mathbb{E}[Z]^{\tau+\mu}_{-\tau+\mu} 
	\geq 4\mu e^{-\frac{1}{2}(\tau+\mu)^2}.
	\]
	Combining these two cases together, we have for \( \mu \in \mathbb{R} \) and \( \tau \) large that
	\[
	\mathbb{E}[Z]^{\tau+\mu}_{-\tau+\mu} \leq 4|\mu|e^{-\frac{1}{2}(\tau - |\mu|)^2}.
	\]
\end{proof}

\section{Proofs related to the lower bound theorems}\label{sec:proofs-lower-bound}

\subsection{Proofs for the independent design lower bound (Theorem \ref{thm:lower-bound-independent-design})}\label{sec:proofs-lower-bound-independent-design}

\subsubsection{Proof of Lemma \ref{lem:functional-von-trees}}\label{ssec:proof-functional-von-trees}
	As a first step, we enlarge the collection of differentially private protocols to selectively include protocols which rely on both $Y^{(s)}$ and $X^{(s)}$ for estimating $f$, for a selection of servers. This step allows us to circumvent technical data processing arguments later on in the proof. To that extent, consider $\cS \subset [S]$, to be determined later.
	Write $\Delta X^{(s)} = (X_i^{(s)} - \E_f X_i^{(s)})_{i \in [n]}$. Let $\overline{\cM_{\mathbf{\varepsilon},\mathbf{\delta}}}$ denote the collection of all $(\mathbf{\varepsilon},\bf{\delta})$-differentially private protocols where the $s$-th transcript $T^{(s)}$ is allowed to depend on $(X^{(s)},Y^{(s)})$ if $s \in \cS$, whereas $T^{(s)}$ is allowed to depend on $(Y^{(s)}, \Delta X^{(s)})$ if $s \notin \cS$. Note that this does not impact our definition of the channel generating $T^{(s)}$ being differentially private otherwise, as the unit of privacy is the same for $(Y_{i\cdot}^{(s)},X_i^{(s)},T_{i\cdot}^{(s)})$ as it is for $(Y_{i\cdot}^{(s)},T_{i\cdot}^{(s)})$. The class $\overline{\cM_{\mathbf{\varepsilon},\mathbf{\delta}}}$ is a superset of $\cM_{\mathbf{\varepsilon},\mathbf{\delta}}$, so 
	\begin{align*}
		\inf_{\hat f \in \cM_{\mathbf{\varepsilon},\mathbf{\delta}}} \sup_{f \in \cH^\alpha(R)} \E_f \|\hat f - f\|_2^2 &\geq \inf_{\hat f \in \overline{\cM_{\mathbf{\varepsilon},\mathbf{\delta}}}} \sup_{f \in \cH^\alpha(R)} \E_f \|\hat f - f\|_2^2.
	\end{align*}

	Consider $D_*$ to be the solution to the equation \eqref{eq:rate-determining-relation:independent-design} and set $L =  \lceil \log_2 (D_*) \rceil$. Consider now the $L$-dimensional sub-model given by
	\begin{equation}\label{eq:holder-submodel}
	\cH^\alpha_{R,L} : \left\{f \in \cH_\alpha^{R} \,:\, f =\sum_{k=1}^{2^L}f_k \phi_{Lk},\, f_k \in [-2^{-L (\alpha +1/2)}R, 2^{-L (\alpha +1/2)}R]  \right\},
	\end{equation}
	where $\phi_k$ are the wavelet basis functions at resolution level $L$.  Let $\mu$ and $\nu$ denote dominating measures for $\P_f^{Y^{(s)},}$ and $\P_f^{(Y^{(s)}X^{(s)})}$, respectively, for $f \in \cH^\alpha_{R,L}$. Denote $f_L$ by the vector $\{f_{l}: l=1,\dots,L\}$. Let $\nabla_{f_L}$ denote the vector of partial derivatives with respect to the vector $f_L$. 

	Since $\cH^\alpha_{R,L}$ is a subset of $\cH_\alpha^{R}$, we have that
	\begin{equation*}
		 \sup_{f \in \cH^\alpha(R)} \E_f \|\hat f - f\|_2^2 \geq  \sup_{f \in \cH^\alpha_{R,L}} \E_f \|\hat f - f\|_2^2,
	\end{equation*}
	where $f = \sum_{k=1}^{2^L} f_k \phi_{Lk}$. Furthermore, for any probability distribution $\pi$ on $\cH^\alpha_{R,L}$, the latter is futher lower bounded by
	\begin{equation*}
		 \int_{f_L} \E_f \|\hat f - f\|_2^2 \pi(f_L) df_L.
	\end{equation*}
	Note that $\E_f\|\hat f -f\|_2^2 \geq \E_f \left(\sum_{k=1}^{2^L} (\hat f_{k}-f_{k})^2\right)$, where $\hat{f}_k = \int \hat{f} \phi_{Lk}$. 
	For getting a lower bound on the minimax risk we will use a multivariate version of the Van-Trees inequality due to \cite{gill1995applications} (Theorem 1), which bounds the $\ell^2$-risk by
	\begin{equation*}
		\int_{f_L}\E\left(\sum_{k=1}^{2^L} (\hat f_{k}-f_{k})^2\right) \lambda(f_L) df_L \geq \frac{2^{2L}}{\int\mathrm{Tr}(I^{T}_{f_L})\pi(f_L)df_L + J(\pi)},
	\end{equation*}
	where $I^{T}_{f_L}$ is the sub-model Fisher information associated with the transcript $T = (T^{(1)},\dots,T^{(S)})$:
	\begin{equation*}
		I^{T}_{f_L}= \E_f \left[\nabla_{f_L} \log \left( \frac{d \P^{T}_f}{d \mu'} \right)\right] \left[\nabla_{f_L} \log \left( \frac{d \P^{T}_f}{d \mu'} \right)\right]^\top,
	\end{equation*}
	and  $\pi(f_L) = \prod_{k = 1}^{2^L} \pi_k(f_{k})$ is a prior for the parameter $f_L$ and $J(\pi)$ is the Fisher information associated with the prior $\pi$:
	\[
	J(\pi) = \sum_{k=1}^{2^L} \int \frac{\pi'_k(f_{k})^2}{\pi_k(f_{k})} df_{Lk}.
	\]
	The independence of the data in the servers implies independence of the transcripts, which yields
	\begin{equation*}
		\nabla_{f_L} \log \left( \frac{d \P^{T^{}}_f}{d \mu'} \right) = \sum_{s=1}^S \nabla_{f_L} \log \left( \frac{d \P^{T^{(s)}}_f}{d \mu'} \right).
	\end{equation*}
	Let $Z^{(s)}$ denote either $(Y^{(s)},\Delta X^{(s)})$ if $s \in \cS^c$ or $(Y^{(s)},X^{(s)})$ if $s \in \cS$ and let $\eta$ denote $\mu$ or $\nu$ accordingly. By Bayes rule, it follows that
	\begin{align*}
		\nabla_{f_L} \log \left( \frac{d \P^{T^{(s)}}_f}{d \mu'} \right) &= \frac{ \int \frac{d\P^{T^{(s)}|Z^{(s)}}}{d \mu'} \nabla_{f_L} \frac{d \P^{Z^{(s)}}_f}{d \eta} d\eta  }{\frac{d \P^{T^{(s)}}_f}{d \mu'}} \\ 
		&= \E_f \left[ \nabla_{f_L} \log \left( \frac{d \P^{Z^{(s)}}_f}{d \mu} \right) \bigg| T^{(s)}  \right].
	\end{align*}
	The identity \eqref{eq:fisher-information-Ys} follows. The data generating mechanism forms a Markov chain; $f \to X^{(s)} \to Y^{(s)}$, where the joint distribution degenerates as
	\begin{equation*}
		d\P_f^{(X^{(s)},Y^{(s)})} = d\P_f^{(X^{(s)})} d\P^{(Y^{(s)}|X^{(s)})}.
	\end{equation*}
	Hence, 
	\begin{equation*}
		\E_f \left[ \nabla_{f_L} \log \left( \frac{d\P_f^{(X^{(s)},Y^{(s)})}}{d\mu \times \nu} \right)  \bigg| T^{(s)} \right] = \E_f \left[ \nabla_{f_L} \log \left( \frac{d\P_f^{(X^{(s)})}}{d\nu} \right)  \bigg| T^{(s)} \right]
	\end{equation*}
	which implies that the Fisher information corresponding to the joint distribution $Z^{(s)} = (Y^{(s)},X^{(s)})$ equals 
	\begin{equation*}
		I^{X^{s}|T^{(s)}}_f = \E_f \, \E_f \left[ \nabla_{f_L} \log \left( \frac{d\P_f^{(X^{(s)})}}{d\nu} \right)  \bigg| T^{(s)} \right] \E_f \left[ \nabla_{f_L} \log \left( \frac{d\P_f^{(X^{(s)})}}{d\nu} \right)  \bigg| T^{(s)} \right]^\top.
	\end{equation*}
	Let $X^{(s)}$ follow the dynamics of \eqref{eq:holder-smooth-X-generation}. By the orthogonality of the $\phi_{lk}$'s, the wavelet transformation of $X_i^{(s)}$;
	\begin{equation}\label{eq:observational-model-Xs-sufficient}
		X_{ik}^{(s)} = \int X_i^{(s)}(t) \phi_{Lk}(t) dt \overset{d}{=} f_l + 2^{-L(\alpha + 1/2) } (1/2 - B_{ik}^{(s)}) \quad \text{where} \quad B_{ik}^{(s)} \overset{iid}{\sim} \text{Beta}(2,2),
	\end{equation}
	with $k = 1,\ldots,2^L$, is sufficient for the observations $X^{(s)}$ and submodel $\cH^\alpha_{R,L}$. By the Neyman-Pearson factorization lemma,
	\begin{equation*}
		\E \left[ \nabla_{f_L} \log \left( \frac{d\P_f^{(X^{(s)})}}{d\nu} \right)  \bigg| T^{(s)} \right] = \E \left[ \nabla_{f_L} \log \left( \frac{d\P_f^{(X^{(s)}_L)}}{d\lambda} \right)  \bigg| T^{(s)} \right],
	\end{equation*}
	where $X^{(s)}_L = \{X_{ik}^{(s)}: k = 1,\ldots,2^L\}$ and $\lambda$ is the Lebesgue measure on $\R^{2^L}$. 
	
	Similarly, since for the observations $(Y^{(s)}_{ij},X_i^{(s)} - \E_f X_i^{(s)})$ the distribution of $X_i^{(s)} - \E_f X_i^{(s)}$ is stationary in $f$, the statistics $\tilde{Y}^{(s)}_{ij} = Y^{(s)}_{ij}- [X_i^{(s)} - \E_f X_i^{(s)}](\zeta^{(s)}_{ij})$ are sufficient for $f$;
	\begin{equation}\label{eq:score-determining-Ys}
		\tilde{Y}^{(s)}_{ij} = f(\zeta^{(s)}_{ij}) + \xi_{ij}^{(s)},
	\end{equation}
	and for $\tilde{Y}^{(s)} = (\tilde{Y}^{(s)}_{ij})_{i\in[n],j\in[m_s]}$, we obtain
	\begin{equation*}
		I_f^{Y^{(s)}|T^{(s)}} = \E_f \, \E_f \left[ \nabla_{f_L} \log \left( \frac{d\P_f^{(\tilde{Y}^{(s)}|T^{(s)})}}{d\mu} \right)  \bigg| T^{(s)} \right] \E_f \left[ \nabla_{f_L} \log \left( \frac{d\P_f^{(\tilde{Y}^{(s)}|T^{(s)})}}{d\mu} \right)  \bigg| T^{(s)} \right]^\top.
	\end{equation*}
	Let $\phi(\zeta^{(s)}_{ij}) = \left(\phi_l(\zeta^{(s)}_{ij})\right)_{l\in[L]}$. The score function for $\tilde{Y}^{(s)}$ is given in \eqref{eq:effective-score-Ys}. By linearity of the trace operation,
	\begin{equation*}
		\int \text{Tr} (I_{T^{(s)}}(f)) d\pi(f) = \sum_{s \in \cS} \int \text{Tr} (I_{T^{(s)}}(f)) d\pi(f) + \sum_{s \in \cS^c} \int \text{Tr} (I_{T^{(s)}}(f_L)) d\pi(f).
	\end{equation*}
	Now, setting 
	\begin{equation*}
		\cS = \left\{ s \in [S] \, : \, \text{Tr}(I^{X^{(s)}|T^{(s)}})  <  \text{Tr}(I^{Y^{(s)}|T^{(s)}}) \right\},
	\end{equation*}
	and noting that by standard data processing arguments it holds that 
	\begin{equation*}
		\text{Tr}(I^{X^{(s)}|T^{(s)}}) \leq \text{Tr}(I^{X^{(s)}}) \quad \text{ and } \quad \text{Tr}(I^{Y^{(s)}|T^{(s)}}) \leq \text{Tr}(I^{Y^{(s)}}),
	\end{equation*}
	(see e.g. \cite{zamir1998fisherinfo}).

	By taking as prior $\pi_k$ a rescaled version of the density $t \mapsto \cos^2(\pi t/2) \mathbbm{1}\{|t| \leq 1\}$ such that it is supported on $[-2^{-L(\alpha+1/2)}R,2^{-L(\alpha+1/2)}R]$, we obtain that prior is supported on $\cH^\alpha_{R,L}$. By a straightforward calculation for the Fisher information associated with the prior, we have that the minimax risk is lower bounded by
	\[
		\inf_{\hat f \in \cM_{\mathbf{\varepsilon},\mathbf{\delta}}}  \sup_{f \in \cH^\alpha_{R,L}} \E_f\|\hat f - f\|_2^2 \geq \frac{2^{2L}}{\sup_{f_L}\mathrm{Tr}(I_{T}(f_L)) + \frac{2^L\pi^2}{(2^{-L(\alpha+1/2)}R)^2}}.
	\]
	Combining this with the earlier identities of Lemmas \ref{lem:functional-von-trees} and \ref{lem:fisher-info-bounds} yields the (further) lower bound
	\begin{equation*}
		\frac{1}{ C \left( \sum_{s=1}^S  \min \left\{ 2^{-2L} m_s n_s^2 \varepsilon_s^2 , 2^{-L} m_s n_s, 2^{L(2\alpha - 1)} n_s^2 \varepsilon_s^2, 2^{2L\alpha} n_s \right\} + 2^{2\alpha L} \right)}
	\end{equation*}
	For any $D_* \geq 1$ and $1 \leq D \leq D_*$ such that 
	\begin{equation*}
		D^{2\alpha}_* \geq \sum_{s=1}^S \min \left\{ D^{-1} n_s m_s, D^{-2}m_s n_s^2 \varepsilon_s^2, D^{2\alpha} n_s , D^{2\alpha - 1} n_s^2 \varepsilon_s^2  \right\},
	\end{equation*}
	the latter quantity is further lower bounded by
	\begin{align*}
		\frac{1}{ C \left( \sum_{s=1}^S \min \left\{ D^{-1} n_s m_s, D^{-2}m_s n_s^2 \varepsilon_s^2, D^{2\alpha} n_s , D^{2\alpha - 1} n_s^2 \varepsilon_s^2  \right\} + D_*^{2\alpha} \right)} \gtrsim D_*^{-2\alpha}.
	\end{align*}

\subsubsection{Proof of Lemma \ref{lem:fisher-info-bounds}}\label{sssec:proof-lemma-fisher-info-bounds}
	We provide a bound for each of the four terms in the minimum separately. The data processing inequalities capturing the loss of information due to privacy constraints follow from Lemma \ref{lemma:eigen-value-bound-for-fisher-info} and Lemma \ref{lem:trace-fisher-info-Xs-bounds}. Recall that 
\begin{equation*}
	I_f^{Y^{(s)}} = \E_f \, \E_f \left[ S_f^{Y^{(s)}} \big| T^{(s)} \right] \E_f \left[ S_f^{Y^{(s)}} \big| T^{(s)} \right]^\top,
\end{equation*}
	where $S_f^{Y^{(s)}}$ is given by $S_f^{Y^{(s)}} = \sum_{i=1}^n S_f^{Y^{(s)}_i}$ and
	\begin{equation}\label{eq:score-function-Ys}
		S_f^{Y^{(s)}_i} = \sum_{j=1}^m \left(Y_{ij}^{(s)} - \sum_{k=1}^{2^L}f_k \phi_l(\zeta^{(s)}_{ij})\right) \phi(\zeta^{(s)}_{ij}),
	\end{equation}	
	By a straightforward calculation, 
	\begin{equation*}
		\E_f [S_f^{Y^{(s)}}] [S_f^{Y^{(s)}}]^\top = m_s n_s I_{2^L},
	\end{equation*}
	so the bound $\lambda_{\max}(\E_f [S_f^{Y^{(s)}}] [S_f^{Y^{(s)}}]^\top) \leq m_s n_s$ in combination with Lemma \ref{lemma:eigen-value-bound-for-fisher-info} yields that $\text{Tr}(I_f^{Y^{(s)}}) \lesssim m_s n_s^2 \varepsilon_s^2$ and 
	\begin{equation*}
		\text{Tr}(I_f^{Y^{(s)}}) = 2^L m_s n_s.
	\end{equation*}

	The relationship \eqref{eq:observational-model-Xs-sufficient} defines a regular location-shift model, which yields that the Fisher information for $f$ in the observational model for $X^{(s)}$ is given by
	\begin{equation*}
		I^{X^{(s)}}_f = \E_f \, \E_f \left[ \nabla_f \log \left( \frac{d\P_f^{(X^{(s)}_L)}}{d\nu} \right)   \right]  \E \left[ \nabla_f \log \left( \frac{d\P_f^{(X^{(s)}_L)}}{d\lambda} \right)   \right]^\top = c n_s 2^{2L(\alpha + 1/2)} I_{2^L},
	\end{equation*}
	for some constant $c>0$. The lemma follows by combining the bounds for the four terms in the minimum.	

\subsection{Proof of the common design lower bound (Theorem \ref{thm:common-design-lower-bound})}\label{sec:proofs-lower-bound-common-design}

 The lower bound of the order $m^{-2\alpha}$ is the same as in the non-private case, see e.g. \cite{cai2011meanfunction}, Theorem 2.1. Hence, we are required to show to ``private part'' of the lower bound. Consider the $L+K$-th resolution level of an $A > \alpha$ smooth orthonormal wavelet basis $\phi_{lk}(x) = 2^{l/2}\phi(2^l x - k)$ for $l \in \{ l_0,l_0+1,\dots,$ and $k = 1,\ldots,2^{L+K}\}$, with $K \in \N$ such that a subset of the wavelets $\{ \phi_{k} \, : \, k =1,\dots,2^L \}$ satisfies $\phi_k = \phi_{(L+K) i_k}$ disjoint support.

	Define $f_{ \mu}$ with $ \mu = (\mu_1,\ldots,\mu_{2^L}) \in \R^{2^L}$ as
	\[
	f_{  \mu} = \sum_{k=1}^{2^L} \mu_k \phi_{k}.
	\]
	Let $ d \in \mathbb{N}$ such that
	$m = \lfloor d \cdot 2^L \rfloor$. Consider now the model
	\[
	Y^{(s)}_{ij} = f_{ \mu}(\zeta_j) + V_{ij}  + \xi_{ij}^{(s)}, \quad V_i(t) = \sum_{k}^{2^L} Z_{ik}^{(s)} u_k(t) \quad \xi_{ij}^{(s)} \sim N(0, 1),
	\]
	with $V_{ij} = V_{i}(\zeta_j)$, $u_k(t) = \left(2^{L/2} \|\phi\|_\infty \right)^{-1} \phi_{k}(t)$ and $Z_{ik}^{(s)}$ i.i.d. truncated standard normal $N_{[-1,1]}(0,1)$. 
	
	Note that the function $X_i = f_{ \mu} + V_i$ has mean $f_\mu$ and is in $\mathcal{H}^\alpha(R)$ for $R>1$, for small enough $\|\mu\|_2$ and large enough $m$. Write $ \phi_d := \left(\phi_1(\frac{j}{m})\right)_{j \in [d]}$ and $u = \frac{\phi_d}{2^{L/2}\|\phi\|_\infty}$. The vector $V_i = (V_{ij})_{i \in [n]}$ has covariance matrix proportional to $\Sigma := \operatorname{diag}(u u^T, \dots, u u^T)$. We denote by $ Y^{(s)}_i$ the vector $(Y^{(s)}_{ij})_{j \in[m]}$ and by $f_{ \mu}( T)$ the vector $(f_{ \mu}(\zeta_j))_{j \in[m]}$

	Wherever it is non-zero, the likelihood for the $i$-th individual satisfies
	\[
	p_{ \mu}(Y^{(s)}_i) \propto \exp \left( -\frac{1}{2} \left(  Y^{(s)}_i -f_{ \mu}( T) \right)^T \left( \Sigma + I_m \right)^{-1} \left(  Y^{(s)}_i - f_{ \mu}( T) \right)  \right),
	\]
	where the normalizing constant can be seen to not depend on $ \mu$.

	Writing $\zeta$ for the vector $(\zeta_1,\dots,\zeta_m)$, $f_{ \mu}( \zeta)$ can be written as
	\[
	f_{ \mu}( \zeta) = 
	\begin{pmatrix}
		\sum_k \mu_k \phi_k(\zeta_1) \\
		\vdots \\
		\sum_k \mu_k \phi_k(\zeta_m)
	\end{pmatrix}
	= 
	\begin{pmatrix}
		\phi_1(\zeta_1) & \cdots & \phi_N(\zeta_1) \\
		\vdots & \ddots & \vdots \\
		\phi_1(\zeta_m) & \cdots & \phi_N(\zeta_m)
	\end{pmatrix}
	\begin{pmatrix}
		\mu_1 \\
		\vdots \\
		\mu_{2^L}
	\end{pmatrix}
	= \Phi^T{\mu},
	\]
	where $\Phi$ is an $2^L \times m$ matrix. Hence, its score in $ \mu$ is given by
	\begin{equation}\label{eq:cd-score}
	S_f^{Y^{(s)}_i} := \Phi(\Sigma + I_m)^{-1} ( Y^{(s)}_i - f_{ \mu}( \zeta))
	\end{equation}
	Hence, the Fisher information for the $i$-th individual $- \E_f \frac{\partial^2}{\partial  \mu^2} \log p_{ \mu}(Y^{(s)}_i)$ is bounded above by
	\[
	 \Phi \left( \Sigma + I_m \right)^{-1} \Phi^\top.
	\]
	By Lemma \ref{lemma:bounding-tail-term-trace-attack-cd}, we have that
	\begin{equation*}
		\Phi \left( \Sigma + I_m \right)^{-1} \Phi^\top \preceq 2^L \|\phi\|^2_\infty I_{2^L}.
	\end{equation*}
	Write $S_f^{Y} = \sum_{i=1}^{n_s} S_f^{Y^{(s)}_i}$ and  
	\begin{equation*}
		\bm C_f(T) = \mathbb{E}\left[ S_f^{Y}\mid T\right]\mathbb{E}\left[ S_f^{Y}\mid T\right]^T.
	\end{equation*}
	By combining the previous inequality with Lemma \ref{lem:trace-processing} and \ref{lemma:eigen-value-bound-for-fisher-info}, we have that 
\begin{align}\label{eq:trace-upper-bound-cd}
\E	\text{Tr}(\bm C_f(T)) &\lesssim 2 n_s \sqrt{2^L} \varepsilon_s \sqrt{\E\text{Tr}(\bm C_f(T))} + n_s \delta_s \cdot 2^L (\sqrt{m_s n_s} \vee m_s) \log(1/\delta_s) + n_s \delta_s.
\end{align}
If $\sqrt{\text{Tr}(\E(\bm C_f(T)))} \leq \sqrt{2^L} n_s \varepsilon_s$, then there is nothing to prove. So assume instead that $\sqrt{\text{Tr}(\E(\bm C_f(T)))} \geq \sqrt{2^L} n_s \varepsilon_s$. Combining the above display with \eqref{eq:trace-upper-bound-cd}, we get
\begin{align*}
\sqrt{\E \text{Tr}(\bm C_f(T))}  &\lesssim 2 \sqrt{2^L} n_s \varepsilon_s + \delta_s \varepsilon_s^{-1} n_s^{-1} \sqrt{2^L} (\sqrt{m_s n_s} \vee m_s) \log(1/\delta_s) + \frac{\delta_s}{\sqrt{2^L} \varepsilon_s}.
\end{align*}
The assumption of small enough $\delta'$ ($\delta_s \log(1/\delta_s) \lesssim \frac{n_s^2 \varepsilon_s^2}{\sqrt{m_s n_s} \vee m_s}$), implies that the last two terms are $o(\sqrt{m_s} n_s \varepsilon_s)$ which yields that
$$
\text{Tr}(\E(\bm C_f(T))) \lesssim 2^L n_s^2 \varepsilon_s^2.
$$
We also have $I_f(\bm Z) = \mathrm{Var}(S_f(Z_1)) \lesssim 2^L I_{2^L}$ as already shown. Hence we have that $\mathrm{Tr}(\bm C_f(T)) \leq (2^L)^2 n_s$. Using the von-Trees construction of Lemma \ref{lem:functional-von-trees}, we have that 
$$
\sup_{f \in \cH_\alpha^{R}} \mathbb{E}_{f} \|\hat f - f\|_2^2 \gtrsim \frac{2^{2L}}{ \sum_{s=1}^S (2^L n_s^2 \varepsilon_s^2) \wedge (n_s 2^{2L}) + \pi^2 (2^L)^{2\alpha+2}}.
$$
Optimizing over $L$, the result follows.

\begin{lemma}\label{lemma:eigen-value-bound-for-fisher-info}
	Consider $	\Sigma = \operatorname{diag} \left( \underbrace{u u^T, \dots, u u^T}_{N \,\text{times}} \right)$ where $u = c_L \phi_d$, $\bm \phi_d := \left(\phi_1(\frac{j}{m})\right)_{j \in [d]}$, $c_{L} = \frac{1}{\sqrt{2^L}} \|\bm \phi\|^{-1}_\infty$.  and define 
	\begin{equation*}
		\Phi = \begin{pmatrix}
			\phi_1(\zeta_1) & \cdots & \phi_{2^L}(\zeta_1) \\
			\vdots & \ddots & \vdots \\
			\phi_1(\zeta_m) & \cdots & \phi_{2^L}(\zeta_m)
		\end{pmatrix}.
	\end{equation*}
	It holds that
	$$
	\Phi \left( \Sigma + I_m \right)^{-1} \Phi^T =   a_{L} I_{2^L} \preceq 2^L \|\phi\|^2_\infty I_{2^L}.
	$$
	where $a_{L} = \frac{\|\bm\phi_d\|_2^2}{1 + c_{L}^2 \|\bm\phi_d\|_2^2}$.
\end{lemma}
\begin{proof}
	We have
	\begin{align*}
			\Phi \left( \Sigma + I_m \right)^{-1} \Phi^T  &= \Phi \left( \operatorname{diag} \left( u u^T, \dots, u u^T \right)+ I_m \right)^{-1} \Phi^T\\
			&=  \Phi \left( \operatorname{diag} \left[ \left(I_d - \frac{u u^T}{1+ u^Tu}\right), \dots,  \left(I_d - \frac{u u^T}{1+ u^Tu}\right)\right] \right) \Phi^T\\
			&= \Phi\Phi^T - \frac{1}{1 + u^T u}\Phi\operatorname{diag} \left( uu^T, \ldots, uu^T\right)\Phi^T.\\
	\end{align*}
Leveraging the fact that the $\phi$'s have disjoint support, we have that
\begin{align*}
\Phi \Phi^T &= 
\begin{bmatrix}
	\sum_{j=1}^m \phi_1^2(\zeta_j) & \sum_{j=1}^m \phi_1(\zeta_j) \phi_2(\zeta_j) & \cdots \\
	\sum_{j=1}^m \phi_1(\zeta_j) \phi_2(\zeta_j) & \sum_{j=1}^m \phi_2^2(\zeta_j) & \cdots \\
	\vdots & \vdots & \ddots 
\end{bmatrix}\\
&= 
\begin{bmatrix}
	\sum_{j=1}^m \phi_1^2(\zeta_j) & 0 & \cdots \\
	0 & \sum_{j=1}^m \phi_2^2(\zeta_j) & \cdots \\
	\vdots & \vdots & \ddots 
\end{bmatrix}.
\end{align*}
	Recalling that $\phi_{k}(x) = 2^{L/2 + K/2}\phi(2^{L+K} x - k)$ and $ d \in \mathbb{N}$ such that $m = \lfloor d 2^L \rfloor$, we have that $ \sum_{j=1}^m \phi_k^2(\zeta_j) \leq 2\sum_{j = 1}^d \phi_1^2(\zeta_j)$. This implies that
\[
\Phi \Phi^T= 2\left( \sum_{j=1}^d \phi_1^2(\zeta_j) \right) I_{2^L}.
\]
Similarly, we have that 
\[
\Phi\operatorname{diag} \left( uu^T, \ldots, uu^T\right)\Phi^T \succeq  \begin{pmatrix}
	\bm \phi^T_d u u^T \bm \phi_d & 0 & 0 & \cdots \\
	0 & 	\bm \phi^T_d u u^T \bm \phi_d& 0 & \cdots \\
	\vdots & \vdots & \vdots & \ddots 
\end{pmatrix} = (u^\top \bm \phi_d)^2 I_{2^L}
\]
We have that \( u =c_L \phi_d \). Denote $c_L = (2^{L/2}\| \phi\|_\infty)^{-1}$ and note that $u^\top \phi_d = c_L \|\phi_d\|_2^2$ and $u^\top u = c_L^2 \|\phi_d\|_2^2$. The result now follows:
\begin{align*}
	\Phi \left( \Sigma + I_m \right)^{-1} \Phi^T &\preceq \left( \| \phi_d\|_2^2 - \frac{c_L^2 \| \phi_d\|_2^4}{1 + c_L^2 \| \phi_d\|_2^2} \right) I_{2^L}\\
&= \frac{\|\phi_d\|_2^2}{1 + c_L^2 \|\phi_d\|_2^2} I_{2^L}
\preceq \frac{1}{c_L^2} I_{2^L} \preceq 2^L \|\phi\|^2_\infty I_{2^L}.
\end{align*}
\end{proof}

\begin{lemma}\label{lemma:bounding-tail-term-trace-attack-cd}
    For $M \asymp  2^L(\sqrt{m_s n_s} \vee m_s)  \log(1/\delta_s)$ we have that
    \[
    2 M \delta_s + \int_M^\infty \P \left( |G_1| \geq t \right) dt + \int_M^\infty \P \left( |\breve{G}_1| \geq t \right)dt \lesssim  \delta_s 2^L(\sqrt{m_s n_s} \vee m_s)  \log(1/\delta_s)  +   \delta_s.
    \]
    where $S_f^{Y_i} := \Phi(\Sigma + I_m)^{-1} ( Y_i - f_{ \mu}( T))$, $S_f^{Y^{(s)}} := \sum_{i=1}^n S_f^{Y_i}$ and $G_i = \langle \mathbb{E}[S_f^{Y_i}|T],S_f^{Y_i} \rangle$ and $\breve{G}_i = \langle \mathbb{E}[S_f^{Y_i}|T],\breve{S}_f^{{Y}^{(s)}_i} \rangle$ with $\breve{S}_f^{{Y}^{(s)}_i}$ an independent copy ${S}_f^{{Y}^{(s)}_i}$. 
\end{lemma}	
\begin{proof}[Proof of Lemma \ref{lemma:bounding-tail-term-trace-attack-cd}]
    We proceed along the same line as in Lemma\ref{lemma:bounding-tail-term-trace-attack}. We first analyze the term concerning $G_i =\langle \mathbb{E}[S_f^{Y^{(s)}}|T],S_f^{Y^{(s)}_i} \rangle$. Using Jensen's and the law of total probability, we have that for $i=1,\dots,n_s$,
    \begin{align*}
        \E e^{t |G_i|} \leq\E e^{t|\langle S_f^{Y^{(s)}},S_f^{Y^{(s)}_i} \rangle|}.
    \end{align*}
    Without loss of generality, consider the index $i$ to be equal to $1$. Note that we have that $\langle S_f^{Y^{(s)}},S_f^{Y^{(s)}_1} \rangle = \sum_{i=1}^{n_s} \langle S_f^{Y^{(s)}_i},S_f^{Y^{(s)}_1} \rangle $.
        
    Using our model we have that 
    $$
    S_f^{Y^{(s)}_i} = \Phi(\Sigma + I_{m_s})^{-1}(\bm \xi_{i} + V_i)
    $$
    This implies that $S_f^{Y^{(s)}_i} \sim N(0, \Phi(\Sigma + I_{m_s})^{-1} \Phi^T) =  N(0, a_{2^L}I_d)$, where $a_{2^L}$ is given by Lemma \ref{lemma:eigen-value-bound-for-fisher-info}.
    Using the linearity of expectation, we have that $\langle S_f^{Y^{(s)}},S_f^{Y^{(s)}_1}\rangle =\sum_{i=1}^{n_s} \langle \bm S_f( Z_i),S_f^{Y^{(s)}_1} \rangle $.
    We can split the sum into two terms 
    $$
    \langle S_f^{Y^{(s)}},S_f^{Y^{(s)}_1} \rangle = \|S_f^{Y^{(s)}_1}\|_2^2 + \sum_{i=2}^{n_s} \sum_{j = 1}^{m_s} \xi_{ij}\langle \bm S_f( Z_i),S_f^{Y^{(s)}_1} \rangle 
    $$
    Hence we have that
    \begin{align*}
        \E e^{t|\langle S_f^{Y^{(s)}},S_f^{Y^{(s)}_i} \rangle|} &\leq \E\left( e^{t \|S_f^{Y^{(s)}_1}\|_2^2} e^{t\left| \sum_{i=2}^{n_s}\langle \bm S_f( Z_i),S_f^{Y^{(s)}_1} \rangle  \right|}\right)\\
        &= \E\left( e^{t \|S_f^{Y^{(s)}_1}\|_2^2} \E \left(e^{t\left| \sum_{i=2}^{n_s}\langle \bm S_f( Z_i),S_f^{Y^{(s)}_1} \rangle  \right|}\mid  Z_1  \right)\right)
    \end{align*}
    Conditional on  $Z_1$, we have that $\sum_{i=2}^{n_s}\langle \bm S_f( Z_i),S_f^{Y^{(s)}_1} \rangle$ behaves as follows
    $$
    \sum_{i=2}^{n_s}\langle \bm S_f( Z_i),S_f^{Y^{(s)}_1} \rangle \mid Z_1  \sim N\left(0, (n_s-1)a_{2^L}\|S_f(Z_1)\|_2^2 \right)
    $$
    Hence, using the m.g.f of Gaussian, we have that
    \begin{align*}\label{eq:bound-mgf-cd}
        \E e^{t|\langle S_f^{Y^{(s)}},S_f^{Y^{(s)}_i} \rangle|} 
        &\leq 2 \E\left( e^{t \|S_f^{Y^{(s)}_1}\|_2^2}  e^{\frac{1}{2}t^2(n_s-1)a_{2^L}\|S_f^{Y^{(s)}_1}\|^2}\right)\\
        &= 2 \E\left(  e^{\left(t + \frac{1}{2}t^2(n_s-1)a_{2^L}\right)\|\bm S_f(Z_1)\|^2}\right)\\
        &= 2 \E\left(  e^{\left(t + \frac{1}{2}t^2(n_s-1)a_{2^L}\right)a_{2^L}\sum^{m_s}_{j=1}U_j^2}\right)\\
        &= 	 2 \E\left(  e^{\left(t + \frac{1}{2}t^2(n_s-1)a_{2^L}\right)a_{2^L} m_s U_1^2}\right)
    \end{align*}
    where we have used the fact that $S_f^{Y^{(s)}_i} \sim  N(0, a_{2^L}I_d)$ hence $\|S_f^{Y^{(s)}_i}\|_2 = a_{2^L} \sum_{i=1}^{m_s} U_i^2$ with $U_i$ as i.i.d standard Gaussians. The last expectation is bounded by a constant if $\left(t + \frac{1}{2}t^2(n_s-1)a_{2^L}\right)a_{2^L} m_s \leq \frac{1}{2}$ (using the fact that $U_1^2$ is $\chi^2_1$). Hence if $t\lesssim \frac{1}{m_s 2^L} \wedge \frac{1}{\sqrt{m_s n_s} 2^L}$ then $\E e^{t|G_1|} \leq c$.
    It follows that
    \begin{align*}
        \P(|G_1| > u) &\leq \P(e^{t|G_1| }\geq e^{tu})\\
        &\leq e^{-tu}\E e^{t|G_1|}\\
        &\leq 2e^c e^{-u\left( \frac{1}{m_s 2^L} \wedge \frac{1}{\sqrt{m_s n_s} 2^L}\right) }.
    \end{align*}
    This means that for $M \gtrsim 2^L(\sqrt{m_s n_s} \vee m_s)  \log(1/\delta_s)$, we obtain
    \begin{align*}
        \int_M^\infty \P \left( |G_i| \geq t \right) dt \lesssim 2e^{- \log(1/\delta_s)} = 2\delta_s.
    \end{align*}
    In a similar fashion we can show that $\int_M^\infty \P \left( |\breve G_i| \geq t \right) dt \lesssim 2\delta_s$.
\end{proof}

\subsection{Differential privacy data processing inequalities for the Fisher information}\label{sec:proofs-data-processing-inequalities}

\begin{lemma}\label{lem:trace-processing}
	Let $\bm C_f(T)$ denote the matrix
	\begin{equation}\label{eq:conditional_covariance_of_influence_function}
		\; \mathbb{E}_f\left[S_f^{Y^{(s)}}\mid T\right]\mathbb{E}_f\left[ S_f^{Y^{(s)}} \mid T\right]^T,
	\end{equation}	
	where $S_f^{Y^{(s)}} = \sum_{i=1}^{n_s} S_f^{Y^{(s)}_i} $ denotes the score function of $f$ for the observational model $Y^{(s)}$ as defined in \eqref{eq:score-function-Ys}.
	
	It holds that $\E_f \mathrm{Tr}(\bm C_f(T)) \leq 2^L m_s n_s$.
	
	Furthermore, assuming $\delta_s$ is such that $\delta_s \log(1/\delta_s) \lesssim  \left(\frac{n_s}{m_s} \wedge \sqrt{\frac{n_s}{m_s}}\right)\varepsilon_s^2L^{-1}$, it also holds that
	\[
	\E_f \mathrm{Tr}(\bm C_f(T)) \lesssim m_s n_s^2\varepsilon_s^2.
	\]
\end{lemma}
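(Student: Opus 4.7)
The plan is to prove the two bounds separately: the first via data processing, the second via a trace-attack argument in the spirit of~\cite{cai2024optimal}.

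For the first inequality, I will use that conditional expectation is an $L^2$-projection, so $\mathrm{Tr}(\bm C_f(T)) = \E_f\|\E_f[S_f^{Y^{(s)}}\mid T]\|_2^2 \leq \E_f\|S_f^{Y^{(s)}}\|_2^2$. By the independence of individuals and the mean-zero property of each per-individual score, $\E_f\|S_f^{Y^{(s)}}\|_2^2 = n_s\,\E_f\|S_f^{Y^{(s)}_1}\|_2^2$. Using the expression~\eqref{eq:effective-score-Ys} for the score together with independence of $\xi_{ij}^{(s)}\sim N(0,1)$ and $\zeta_{ij}^{(s)}\sim\mathrm{Unif}[0,1]$ and the orthonormality identity $\E[\phi(\zeta)\phi(\zeta)^\top] = I_{2^L}$, the quantity $\E_f\|S_f^{Y^{(s)}_1}\|_2^2$ computes to $m_s\cdot 2^L$, yielding the first bound.

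For the second inequality, the plan is to expand
\[
\E_f \mathrm{Tr}(\bm C_f(T)) = \sum_{i=1}^{n_s} \E_f\bigl\langle \E_f[S_f^{Y^{(s)}}\mid T],\, S_f^{Y^{(s)}_i}\bigr\rangle,
\]
and, for each $i$, introduce a ghost transcript $\breve T^{(s)}$ generated by the same DP mechanism applied to the neighboring dataset in which $Z_i^{(s)}$ is replaced by an independent copy $\breve Z_i^{(s)}$. Setting $G_i = \bigl\langle \E_f[S_f^{Y^{(s)}}\mid T], S_f^{Y^{(s)}_i}\bigr\rangle$ and $\breve G_i = \bigl\langle \E_f[S_f^{Y^{(s)}}\mid \breve T^{(s)}], S_f^{Y^{(s)}_i}\bigr\rangle$, the independence of $\breve T^{(s)}$ from $Z_i^{(s)}$ combined with the mean-zero score shows $\E_f \breve G_i = 0$. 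Applying the $(\varepsilon_s,\delta_s)$-DP inequality to the truncated functionals $G_i\mathbbm{1}\{|G_i|\leq M\}$ and $\breve G_i\mathbbm{1}\{|\breve G_i|\leq M\}$ and summing in $i$ should deliver a self-bounding inequality
\[
\E_f \mathrm{Tr}(\bm C_f(T)) \;\lesssim\; n_s\varepsilon_s \sqrt{\lambda_{\max}\bigl(\mathrm{Var}(S_f^{Y^{(s)}_1})\bigr)} \cdot \sqrt{\E_f \mathrm{Tr}(\bm C_f(T))} + (\text{tail corrections}).
\]
The critical improvement over a naive Cauchy--Schwarz bound is that the $\varepsilon_s$-term is governed by the \emph{maximum eigenvalue} of $\mathrm{Var}(S_f^{Y^{(s)}_1}) = m_s I_{2^L}$, which equals $m_s$, rather than its trace $2^L m_s$. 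Rearranging then gives $\E_f \mathrm{Tr}(\bm C_f(T)) \lesssim m_s n_s^2 \varepsilon_s^2$ up to the tail corrections.

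The hardest part of the argument is controlling these tails so that the truncation threshold $M$ disappears cleanly. This requires a sub-exponential concentration bound for $\bigl\langle \E_f[S_f^{Y^{(s)}}\mid T], S_f^{Y^{(s)}_i}\bigr\rangle$ conditionally on the first factor; since in the independent design the score is a sum of products of unit Gaussians $\xi_{ij}^{(s)}$ and bounded wavelet evaluations $\phi(\zeta_{ij}^{(s)})$, a Bernstein-type inequality should produce tails of the required form with $M$ of order $(\sqrt{m_s n_s}\vee m_s)\,\mathrm{polylog}(1/\delta_s)$, mirroring Lemma~\ref{lemma:bounding-tail-term-trace-attack-cd} in the common-design case. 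The smallness assumption on $\delta_s$ is calibrated precisely so that the resulting $n_s M\delta_s$ contribution is negligible relative to $m_s n_s^2 \varepsilon_s^2$, closing the argument.
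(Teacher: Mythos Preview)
Your approach matches the paper's: the first bound via $L^2$-contraction of conditional expectation plus the direct computation $\mathrm{Var}(S_f^{Y^{(s)}_i}) = m_s I_{2^L}$, and the second via the self-bounding ``trace-attack'' inequality from \cite{cai2024optimal} (their Lemma~4.2) with the key observation that the leading term carries $\sqrt{\lambda_{\max}(\mathrm{Var}(S_f^{Y^{(s)}_1}))} = \sqrt{m_s}$ rather than the trace.

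One concrete correction on the tail step. You model your threshold on Lemma~\ref{lemma:bounding-tail-term-trace-attack-cd} and guess $M \asymp (\sqrt{m_s n_s}\vee m_s)\,\mathrm{polylog}(1/\delta_s)$, reasoning that the wavelet evaluations are ``bounded.'' In the independent design the relevant tail lemma is Lemma~\ref{lemma:bounding-tail-term-trace-attack}, and the wavelet vector satisfies $\|\phi(\zeta)\|_2^2 \lesssim 2^L$ (each $\phi_{Lk}$ is bounded by $2^{L/2}\|\phi\|_\infty$, not by a universal constant), while $\|S_f^{Y^{(s)}_1}\|_2^2 \lesssim 2^L m_s \sum_j \xi_{1j}^2$. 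Tracking these factors through the MGF bound forces the larger threshold $M \asymp 2^L(\sqrt{n_s}\,m_s^{3/2}\vee m_s^2)\log(1/\delta_s)$. With this $M$, the requirement $n_s M \delta_s \lesssim m_s n_s^2 \varepsilon_s^2$ reduces exactly to the stated assumption $\delta_s\log(1/\delta_s)\lesssim \bigl(\tfrac{n_s}{m_s}\wedge\sqrt{\tfrac{n_s}{m_s}}\bigr)\varepsilon_s^2\,2^{-L}$; your smaller $M$ would not be provable here. Aside from this recalibration the argument goes through as you outline.
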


\begin{proof}[Proof of Lemma \ref{lem:trace-processing}]
	The first assertion follows by standard data processing arguments for the Fisher information, see e.g. \cite{zamir1998fisherinfo}. By Lemma 4.2 in \cite{cai2024optimal}, there exists a universal constant \(C>0\) such that
	\begin{align*}
		\E_f \mathrm{Tr}(\bm C_f(T)) \leq  & C n_s\varepsilon_s \sqrt{\E_f \mathrm{Tr}(\bm C_f(T))} \sqrt{\lambda_{\max}(\mathrm{Var}(S_f^{Y^{(s)}_i})} \\
		&+  2 n_s M \delta_s + n_s \int_M^\infty \P \left( |G_i| \geq t \right) dt + n_s \int_M^\infty \P \left( |\breve{G}_i| \geq t \right) dt,
	\end{align*}
with
	\begin{equation}\label{eq:Gi-joe}
		G_i = \left\langle \mathbb{E}_f[ S_f^{Y^{(s)}} \mid T ],\ S_f^{Y^{(s)}_i} \right\rangle \text{ and }\breve{G}_i = \left\langle \mathbb{E}_f[ S_f^{Y^{(s)}} \mid T ],\ \breve{S}_f^{Y^{(s)}_i} \right\rangle,
	\end{equation}
and \(\breve{S}_f^{Y^{(s)}_i}\) denotes the data set \(S_f^{Y^{(s)}_i}\) with its \(i\)th data point \(Y_i^{(s)}\) replaced by an independent copy \(\breve{Y}_i^{(s)}\).

Note that \(\mathrm{Var}(S_f^{Y^{(s)}_i}) = m_sI_L\),
\begin{equation*}
	\mathrm{Var}_f(S_f^{Y^{(s)}}) = \sum_{j=1}^{m_s} \left(Y_{ij}^{(s)} - \sum_{k=1}^{2^L} f_k \phi_l(\zeta^{(s)}_{ij})\right) \phi(\zeta^{(s)}_{ij})
\end{equation*}
hence \(\lambda_{\max}(\mathrm{Var}(S_f^{Y^{(s)}})) = m_s\). Lemma \ref{lemma:bounding-tail-term-trace-attack} yields that for \(M \asymp 2^L(\sqrt{n_s}m_s^{3/2} \vee m_s^2) \log(1/\delta_s)\), we have that
\[
2 M \delta_s + \int_M^\infty \P \left( |G_1| \geq t \right) dt + \int_M^\infty \P \left( |\breve{G}_1| \geq t \right)dt \lesssim  \delta_s 2^L (\sqrt{n_s}m_s^{3/2} \vee m_s^2)  \log(1/\delta_s) +   \delta_s.
\]
Using Lemma \ref{lemma:bounding-tail-term-trace-attack}, we have that
	\begin{align}\label{eq:trace-upper-bound}
	\E_f\text{Tr}(\bm C_f(T))&\lesssim 2 n_s\sqrt{m_s} \varepsilon_s\sqrt{\E_f\text{Tr}(\bm C_f(T))} + n_s \delta_s 2^L(\sqrt{n_s}m_s^{3/2} \vee m_s^2) \log(1/\delta_s) + n_s \delta_s.
	\end{align}
	If \(\sqrt{\text{Tr}(\E_f(\bm C_f(T)))} \leq \sqrt{m_s} n_s \varepsilon_s\), then there is nothing to prove. So assume instead that \(\sqrt{\text{Tr}(\E_f(\bm C_f(T)))} \geq \sqrt{m_s} n_s \varepsilon_s\). Combining the above display with \eqref{eq:trace-upper-bound}, we get
	\begin{align*}
		\sqrt{\E_f \text{Tr}(\bm C_f(T))}  &\lesssim {2 \sqrt{m_s} n_s \varepsilon_s} +  \delta_s \varepsilon_s^{-1}2^L(\sqrt{n_s}m_s \vee m_s^{3/2}) \log(1/\delta_s) + \frac{\delta_s}{\sqrt{m_s} \varepsilon_s}.
	\end{align*}
	The assumption of small enough \(\delta_s\) (\(\delta_s \log(1/\delta_s)\lesssim \left(\frac{n_s}{m_s} \wedge \sqrt{\frac{n_s}{m_s}}\right)\varepsilon_s^2 2^{-L}\)) implies that the last two terms are \(o(\sqrt{m_s} n_s\varepsilon_s)\) which yields the result.
\end{proof}

\begin{lemma}\label{lemma:bounding-tail-term-trace-attack}
	Let
	\begin{equation*}
		G_i = \left\langle \mathbb{E}_f[ S_f^{Y^{(s)}} \mid T ],\ S_f^{Y^{(s)}_i} \right\rangle, \quad \breve{G}_i = \left\langle \mathbb{E}_f[ S_f^{Y^{(s)}} \mid T ],\ \breve{S}_f^{Y^{(s)}_i} \right\rangle,
	\end{equation*}
and \(\breve{S}_f^{Y^{(s)}_i}\) denotes the data set \(S_f^{Y^{(s)}_i}\) with its \(i\)th data point \(Y_i^{(s)}\) replaced by an independent copy \(\breve{Y}_i^{(s)}\).
	
	For $M \asymp 2^L(\sqrt{n_s}m_s^{3/2} \vee m_s^2) \log(1/\delta_s)$ we have that
	\[
	2 M \delta_s + \int_M^\infty \P_f \left( |G_1| \geq t \right) dt + \int_M^\infty \P_f \left( |\breve{G}_1| \geq t \right)dt \lesssim  \delta_s 2^L(\sqrt{n_s}m_s^{3/2} \vee m_s^2)  \log(1/\delta_s) +   \delta_s.
	\]
\end{lemma}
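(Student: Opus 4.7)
The plan is to follow the strategy of the common-design analogue (Lemma \ref{lemma:bounding-tail-term-trace-attack-cd}), with a new layer of conditioning absorbing the randomness in the design points $\zeta_{ij}^{(s)}$. It suffices to establish that $\E_f e^{t|G_1|} \leq C$ for $t \leq t_0$, where $t_0^{-1}$ is of order $2^L(\sqrt{n_s} m_s^{3/2} \vee m_s^2)$. Markov's inequality then gives $\P_f(|G_1| \geq u) \lesssim e^{-u t_0}$, and choosing $M = t_0^{-1}\log(1/\delta_s)$ yields $\int_M^\infty \P_f(|G_1| \geq u)\,du \lesssim \delta_s t_0^{-1}$; the summand $2M\delta_s$ is then $\lesssim \delta_s t_0^{-1}\log(1/\delta_s)$ by the choice of $M$, matching the bound in the statement. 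The term involving $\breve G_1$ is handled by a parallel argument.

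To bound the MGF, Jensen's inequality gives $\E_f e^{t|G_1|} \leq \E_f e^{t|\langle S_f^{Y^{(s)}}, S_f^{Y^{(s)}_1}\rangle|}$. Writing the inner product as $\|S_f^{Y^{(s)}_1}\|_2^2 + \sum_{i=2}^{n_s}\langle S_f^{Y^{(s)}_i}, S_f^{Y^{(s)}_1}\rangle$ and conditioning on $Z_1 = (Y_{1\cdot}^{(s)}, \zeta_{1\cdot}^{(s)})$ (so that $v := S_f^{Y^{(s)}_1}$ is fixed), the terms for $i \geq 2$ are conditionally independent. Each summand $\langle S_f^{Y^{(s)}_i}, v\rangle = \sum_j \xi_{ij}^{(s)} \langle \phi(\zeta_{ij}^{(s)}), v\rangle$ is, given $\zeta_{i\cdot}^{(s)}$, a centered Gaussian with variance $\sum_j \langle \phi(\zeta_{ij}^{(s)}), v\rangle^2$. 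Integrating out $\zeta_{ij}^{(s)} \sim U[0,1]$ using wavelet orthonormality (so $\E\langle \phi(\zeta), v\rangle^2 = \|v\|_2^2$) and the compactness of the wavelet supports (so $\langle \phi(\zeta), v\rangle^2 \leq c 2^L \|v\|_2^2$), standard MGF bounds for bounded random variables yield $\E_f[e^{t|\sum_{i \geq 2}\langle S_f^{Y^{(s)}_i}, v\rangle|} \mid Z_1 ] \lesssim e^{c n_s m_s t^2 \|v\|_2^2}$ provided $t^2 2^L \|v\|_2^2 \lesssim 1$. The remaining factor $e^{t\|v\|_2^2}$ is controlled by noting that $\|v\|_2^2 = \xi_{1\cdot}^\top M(\zeta_{1\cdot}^{(s)}) \xi_{1\cdot}$ is a quadratic form in Gaussians whose operator norm and trace are both $\lesssim m_s 2^L$ in the worst case, so $\E_f e^{s \|v\|_2^2} \lesssim 1$ for $s \lesssim 1/(m_s 2^L)$.

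Setting $s = t + c n_s m_s t^2$ and combining produces the admissible range of $t$; optimizing against the constraints $s \lesssim 1/(m_s 2^L)$ and $t^2 2^L \|v\|_2^2 \lesssim 1$ determines $t_0^{-1}$ of the claimed order. The main technical obstacle is the uniformity of the constraint $t^2 2^L \|v\|_2^2 \lesssim 1$, since $\|v\|_2^2$ is itself random; I plan to handle this by truncating to the event $\{\|v\|_2^2 \leq C m_s 2^L \log(1/\delta_s)\}$, whose complement has probability $\lesssim \delta_s$ by sub-exponential concentration for Gaussian quadratic forms, and absorbing the off-event contribution directly into the $\delta_s$ budget on the right-hand side. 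For $\breve G_1$ the same strategy applies after conditioning on $T$: since $\breve S_f^{Y^{(s)}_1}$ is independent of $(T, S_f^{Y^{(s)}})$, one replaces $v$ by $U := \E_f[S_f^{Y^{(s)}} \mid T]$ and uses $\|U\|_2^2 \leq \E_f[\|S_f^{Y^{(s)}}\|_2^2 \mid T]$ (Jensen) together with $\E_f \|S_f^{Y^{(s)}}\|_2^2 = 2^L m_s n_s$ to drive the same MGF estimate. The extra factor of $m_s$ compared with the common-design rate in Lemma \ref{lemma:bounding-tail-term-trace-attack-cd} originates in the worst-case bound $\lVert M(\zeta_{1\cdot}^{(s)}) \rVert_{\mathrm{op}} \lesssim m_s 2^L$, which replaces the clean value $a_{2^L} \lesssim m_s$ available under a shared deterministic design.
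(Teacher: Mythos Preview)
Your route for $G_1$ matches the paper's: Jensen to pass from $\E_f e^{t|G_1|}$ to $\E_f e^{t|\langle S_f^{Y^{(s)}}, S_f^{Y^{(s)}_1}\rangle|}$, split off the diagonal $\|v\|_2^2$, treat the cross sum as a conditional Gaussian, and bound $\|v\|_2^2$ as a Gaussian quadratic form. Two points deserve comment.

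First, the Bernstein refinement and truncation are unnecessary and, as written, incomplete. On the off-event $\{\|v\|_2^2 > C m_s 2^L\log(1/\delta_s)\}$ you must control $\int_M^\infty \P_f(|G_1|\geq t,\ \text{off-event})\,dt$, not merely $\P_f(\text{off-event})$; a small probability does not by itself bound an unbounded tail integral, and closing this requires an unconditional tail bound on $|G_1|$---which is precisely what the cruder argument already provides. The paper sidesteps the issue entirely: it conditions on \emph{all} design points $\{\zeta_{ij}^{(s)}\}$ and uses the deterministic worst-case bound $\langle\phi(\zeta),v\rangle^2 \leq c_\phi^2 2^L\|v\|_2^2$ (Cauchy--Schwarz plus compact wavelet support), giving conditional variance $\leq c_\phi^2 m_s(n_s-1)2^L\|v\|_2^2$ with no side constraint. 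It then bounds $\|v\|_2^2 \leq c_\phi^2 m_s 2^L\sum_j\xi_{1j}^2$, which forces $s \lesssim 1/(m_s^2 2^L)$ (note $m_s^2$, not $m_s$: the product of $m_s$ chi-squared MGF factors must remain $O(1)$). Solving $s = t + \tfrac12 t^2 c_\phi^2 m_s(n_s-1)2^L \lesssim 1/(m_s^2 2^L)$ yields exactly $t_0^{-1} \asymp 2^L(m_s^2 \vee m_s^{3/2}\sqrt{n_s})$, matching the statement without truncation.

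Second, your treatment of $\breve G_1$ has a genuine gap. Conditioning on $T$ and writing $\breve G_1 = \langle U,\breve S_1\rangle$ is fine, but $\|U\|_2^2 \leq \E_f[\|S_f^{Y^{(s)}}\|_2^2\mid T]$ together with $\E_f\|S_f^{Y^{(s)}}\|_2^2 = 2^L m_s n_s$ controls only the \emph{mean} of $\|U\|_2^2$; it does not bound $\E_T e^{c m_s t^2\|U\|_2^2}$, which is what the MGF step needs, nor does it handle the random Bernstein constraint $t^2 2^L\|U\|_2^2 \lesssim 1$. The paper instead reuses the initial Jensen step, $\E_f e^{t|\breve G_1|} \leq \E_f e^{t|\langle S_f^{Y^{(s)}}, \breve S_f^{Y^{(s)}_1}\rangle|}$, and then conditions on $\breve Z_1$: since $\breve S_1$ is independent of all of $S_f^{Y^{(s)}}$, the cross-term analysis runs exactly as for $G_1$ with $n_s$ independent terms and no diagonal, giving the same $t_0$ directly.

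(Minor: in the common-design lemma the constant $a_{2^L}$ is bounded by $2^L\|\phi\|_\infty^2$, not by $m_s$.)
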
	

\begin{proof}[Proof of Lemma \ref{lemma:bounding-tail-term-trace-attack}]
	We first analyze the term with $G_i$ in the integrand. Using Jensen's inequality and the law of total probability, we have for \(i=1,\dots,n_s\),
	\begin{align*}
		\mathbb{E}_f \exp\!\left(t \left|G_i\right|\right) 
		&= \mathbb{E}_f \exp\!\left(t \left|\left\langle \mathbb{E}_f[ S_f^{Y^{(s)}} \mid T ],\ S_f^{Y^{(s)}_i} \right\rangle\right|\right) \\ 
		&= \int \int \exp\!\left(t \left|\left\langle \mathbb{E}_f[ S_f^{Y^{(s)}} \mid T = u ],\ S_f^{Y^{(s)}_i}(y_i) \right\rangle\right|\right) dP^{T \mid Y^{(s)}_i = y_i}(u) dP^{Y^{(s)}_i}(y_i) \\[-2pt]
		&\le \int \int \mathbb{E}_f \left[\exp\left(t \left|\left\langle S_f^{Y^{(s)}},\ S_f^{Y^{(s)}_i}(y_i) \right\rangle\right|\right) \mid T = u \right] dP^{T \mid Y^{(s)}_i = y_i}(u) dP^{Y^{(s)}_i}(y_i) \\[-2pt]
		&= \mathbb{E}_f \exp\!\left(t \left|\left\langle S_f^{Y^{(s)}},\ S_f^{Y^{(s)}_i} \right\rangle\right|\right).
	\end{align*}
	Without loss of generality, consider \(i=1\). we have
	\[
	\left\langle S_f^{Y^{(s)}},\ S_f^{Y^{(s)}_1} \right\rangle 
	= 
	\sum_{i=1}^{n_s}\,\left\langle S_f^{Y^{(s)}_i},\ S_f^{Y^{(s)}_1} \right\rangle 
	= 
	\sum_{i=1}^{n_s} \sum_{j=1}^{m_s}\,\xi_{ij}^{(s)}\,\left\langle \bm{\phi}(\zeta^{(s)}_{ij}),\ S_f^{Y^{(s)}_{1}} \right\rangle,
	\]
	where \(\xi_{ij}^{(s)} \overset{\text{i.i.d.}}{\sim} \mathcal{N}(0,1)\). We have
	\[
	\left\langle S_f^{Y^{(s)}},\ S_f^{Y^{(s)}_1} \right\rangle 
	= 
	\left\| S_f^{Y^{(s)}_1} \right\|_2^2 
	+ 
	\sum_{i=2}^{n_s} \sum_{j = 1}^{m_s} \xi_{ij}^{(s)}\,\left\langle \bm{\phi}(\zeta^{(s)}_{ij}),\ S_f^{Y^{(s)}_1} \right\rangle.
	\]
	Hence,
	\begin{align*}
		\mathbb{E}_f \exp\!\left(t \left|\left\langle S_f^{Y^{(s)}},\ S_f^{Y^{(s)}_1} \right\rangle\right|\right) 
		&\le 
		\mathbb{E}_f\!\left(\exp\!\left(t \left\| S_f^{Y^{(s)}_1} \right\|_2^2\right)\;\exp\!\left(t \left|\sum_{j=2}^{n_s}\left\langle S_f^{Y^{(s)}_j},\ S_f^{Y^{(s)}_1} \right\rangle\right|\right)\right)\\[-2pt],
	\end{align*}
	which in turn equals
	\begin{equation*}
		\mathbb{E}_f\!\left[\exp\!\left(t \left\| S_f^{Y^{(s)}_1} \right\|_2^2\right)\,  \mathbb{E}_f\!\left(\exp\!\left(t \left|\sum_{j=2}^{n_s}\left\langle S_f^{Y^{(s)}_j},\ S_f^{Y^{(s)}_1} \right\rangle\right|\right) \,\Bigg|\, \{\zeta^{(s)}_{ij}\},\, Y^{(s)}_1 \right)\right].
	\end{equation*}
	Conditionally on \(\{\zeta^{(s)}_{ij}\}\) and \(Y^{(s)}_1\), the sum \(\sum_{j=2}^{n_s}\left\langle S_f^{Y^{(s)}_j},\ S_f^{Y^{(s)}_1} \right\rangle\) is normal with mean zero, since
	\[
	\sum_{j=2}^{n_s}\sum_{j=1}^{m_s} \xi_{ij}^{(s)}\,\left\langle \bm{\phi}(\zeta^{(s)}_{ij}),\ S_f^{Y^{(s)}_1} \right\rangle \bigg| [\{\zeta^{(s)}_{ij}\},\, Y^{(s)}_1]
	\sim
	\mathcal{N}\!\left(0,\ \sum_{j=2}^{n_s}\sum_{j=1}^{m_s} \left\langle \bm{\phi}(\zeta^{(s)}_{ij}),\ S_f^{Y^{(s)}_1} \right\rangle^2 \right).
	\]
	Using the moment generating function of the normal distribution, we have
	\begin{equation}\label{eq:bound-mgf}
		\mathbb{E}_f \exp\!\left(t \left|\left\langle S_f^{Y^{(s)}},\ S_f^{Y^{(s)}_1} \right\rangle\right|\right) 
		\le 
		2\,\mathbb{E}_f\!\left[\exp\!\left(t \left\| S_f^{Y^{(s)}_1} \right\|_2^2\right)\,\exp\!\left(\tfrac{1}{2}\,t^2\,\sum_{j=2}^{n_s}\sum_{j=1}^{m_s}\left\langle \bm{\phi}(\zeta^{(s)}_{ij}),\ S_f^{Y^{(s)}_1} \right\rangle^2\right)\right].
	\end{equation}
	Next, 
	\[
	\sum_{j=2}^{n_s}\sum_{j=1}^{m_s}\left\langle \bm{\phi}(\zeta^{(s)}_{ij}),\ S_f^{Y^{(s)}_1} \right\rangle^2
	\le
	\sum_{j=2}^{n_s}\sum_{j=1}^{m_s}
	\left\|\bm{\phi}(\zeta^{(s)}_{ij})\right\|_2^2\,\left\|S_f^{Y^{(s)}_1}\right\|_2^2
	\le
	c_\phi^2\,m_s\,(n_s-1)\,2^L\,\left\|S_f^{Y^{(s)}_1}\right\|_2^2.
	\]
	Hence, as a consequence of \eqref{eq:bound-mgf},
	\begin{align}\label{eq:bound-mgf-2}
		\mathbb{E}_f \exp\!\left(t \left|\left\langle S_f^{Y^{(s)}},\ S_f^{Y^{(s)}_1} \right\rangle\right|\right) 
		&\le 
		2\,\mathbb{E}_f \!\left[\exp\!\left(t \left\| S_f^{Y^{(s)}_1} \right\|_2^2\right)\,\exp\!\left(\tfrac{1}{2}\,t^2\,c_\phi^2\,m_s\,(n_s-1)\,2^L\,\left\| S_f^{Y^{(s)}_1} \right\|_2^2\right)\right] \nonumber\\[-2pt]
		&= 
		2\,\mathbb{E}_f \!\left[\exp\!\left(\left(t + \tfrac{1}{2}\,t^2\,c_\phi^2\,m_s\,(n_s-1)\,2^L\right)\,\left\| S_f^{Y^{(s)}_1} \right\|_2^2\right)\right].
	\end{align}
	Setting $s = t + \tfrac{1}{2}\,t^2\,c_\phi^2\,m_s\,(n_s-1)\,2^L$, we proceed to bound \(\mathbb{E} \exp\!\left(s\,\left\| S_f^{Y^{(s)}_1} \right\|_2^2\right)\).
	Observe that
	\[
	\left\| S_f^{Y^{(s)}_1} \right\|_2^2
	= 
	\sum_{k=1}^{2^L}
	\left(\sum_{j=1}^{m_s}\xi_{1j}\,\phi_{Lk}(\zeta^{(s)}_{1j})\right)^{2}
	\le
	\sum_{k=1}^{2^L}
	\left(\sum_{j=1}^{m_s}\xi_{1j}^2\right)\,\left(\sum_{j=1}^{m_s}\phi_{Lk}^2(\zeta^{(s)}_{1j})\right)
	\le
	c_\phi^2\,m_s\,2^L\,\sum_{j=1}^{m_s}\xi_{1j}^2.
	\]
	Hence by independence and the m.g.f.\ bound for \(\xi_{1j}^2\),
	\[
	\mathbb{E}_f \exp\!\left(s\,\left\| S_f^{Y^{(s)}_1} \right\|_2^2\right)
	\le
	\mathbb{E}_f \exp\!\left(s\,c_\phi^2\,m_s\,2^L\,\sum_{j=1}^{m_s}\xi_{1j}^2\right)
	= 
	\left(\mathbb{E}_f \exp\!\left(s\,c_\phi^2\,m_s\,2^L\,\xi_{11}^2\right)\right)^{m_s}.
	\]
	One checks this is bounded by a constant as soon as $s \le \tfrac{1}{m_s^2\,2^L}$, which, recalling that $s = t + \tfrac{1}{2}\,t^2\,c_\phi^2\,m_s\,(n_s-1)\,2^L$, is the case when
	$t 
	\lesssim
	\tfrac{1}{m_s^2\,2^L} 
	\wedge 
	\tfrac{1}{\sqrt{n_s}\,m_s^{3/2}\,2^L}$.

	Within this regime,
	\(\mathbb{E}\exp\!\left(t\,|G_1|\right)\le e^c\) for some constant \(c>0\).  Consequently,
	\begin{align*}
		\P\left(|G_1| > u\right) 
		&\le 
		\P\left(\exp\!\left(t\,|G_1|\right) \ge \exp\!\left(t\,u\right)\right) \\
		&\le 
		\exp\!\left(-t\,u\right)\,\mathbb{E} \exp\!\left(t\,|G_1|\right)\\[-4pt]
		&\le 
		2\,e^c\,\exp\!\left(-\,u\,\left(\,\tfrac{1}{\sqrt{n_s}\,m_s^{3/2}\,2^L}\,\wedge\,\tfrac{1}{m_s^2\,2^L}\right)\right).
	\end{align*}
	Thus, setting 
	\[
	M \gtrsim 2^L\left(\sqrt{n_s}\,m_s^{3/2}\,\vee\,m_s^2\right)\log\!\left(\tfrac{1}{\delta_s}\right),
	\]
	one gets
	\[
	\int_M^\infty \P\left(|G_1| \ge t\right)\,dt 
	\lesssim 
	2\,\exp\!\left(-\log\left(\tfrac{1}{\delta_s}\right)\right)
	= 
	2\,\delta_s.
	\]
	
	We now turn to the term \(\int_M^\infty \P\left(|\breve{G}_1| \ge t\right)\,dt\). By a similar argument, observe
	\[
	\mathbb{E}_f \exp\!\left(t\,|\breve{G}_i|\right)
	= 
	\mathbb{E}_f \exp\!\left(t\,\left|\left\langle \mathbb{E}_f[ S_f^{Y^{(s)}} \mid T ],\ \breve{S}_f^{Y^{(s)}_i} \right\rangle\right|\right)
	\le 
	\mathbb{E}_f \exp\!\left(t\,\left|\left\langle S_f^{Y^{(s)}},\ \breve{S}_f^{Y^{(s)}_i} \right\rangle\right|\right).
	\]
	By symmetry,
	\[
	\left\langle S_f^{Y^{(s)}},\ \breve{S}_f^{Y^{(s)}_1} \right\rangle
	= 
	\sum_{j=1}^{n_s}\sum_{s=1}^{m_s} \xi_{ij}\,\left\langle \bm{\phi}(Y^{(s)}_{js}),\ \breve{S}_f^{Y^{(s)}_1} \right\rangle.
	\]
	The same argument as before yields
	\[
	\mathbb{E}_f \exp\!\left(t\,\left|\left\langle S_f^{Y^{(s)}},\ \breve{S}_f^{Y^{(s)}_1} \right\rangle\right|\right)
	\le 
	2\,\mathbb{E}_f\!\left[\exp\!\left(\tfrac{1}{2}\,t^2\,c_\phi^2\,m_s\,n_s\,2^L\,\left\| \breve{S}_f^{Y^{(s)}_1} \right\|_2^2\right)\right].
	\]
	As before, this remains bounded when 
	\(
	t 
	\lesssim 
	\tfrac{1}{m_s^2\,2^L}
	\wedge
	\tfrac{1}{\sqrt{n_s}\,m_s^{3/2}\,2^L}.
	\)
	Hence the same exponential tail bound applies;
	\[
	\int_M^\infty \P\left(|\breve{G}_1| \ge t\right)\,dt 
	\lesssim 
	2\,\exp\!\left(-\log \left(\tfrac{1}{\delta_s}\right)\right)
	= 
	2\,\delta_s
	\quad
	\text{for}
	\quad
	M \gtrsim 2^L\left(\sqrt{n_s}\,m_s^{3/2}\,\vee\,m_s^2\right)\log\!\left(\tfrac{1}{\delta_s}\right).
	\]
	Putting all pieces together yields the statement of the lemma.
\end{proof}

\begin{lemma}\label{lem:trace-fisher-info-Xs-bounds}
	Consider $X_{L;i}^{(s)} = (X_{1i}^{(s)},\dots,X_{2^Li}^{(s)})$ where $X_{ki}^{(s)}$ is defined as in \eqref{eq:observational-model-Xs-sufficient}. It holds that $\text{Tr}(I^{X^{(s)}}_f) \leq n_s 2^L$ and if $0 \leq \delta_s \leq  \left( \left({n_s}{2^{-L}} \wedge n_s^{1/2}{2^{-L/2}}  \right) \varepsilon_s^2 \right)^{1+p}$ for some $p>0$, we also have that 
	\begin{equation*}
		\text{Tr}(I^{X^{(s)}}_f) \leq C n_s^2 \varepsilon_s^2 
	\end{equation*}
	for a constant $C > 0$.
\end{lemma}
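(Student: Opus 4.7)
The plan is to prove the two bounds separately, using tensorization for the first and the score-attack / Fisher information data-processing machinery (Lemma~4.2 of \cite{cai2024optimal}) for the second, in direct analogy with the proof of Lemma~\ref{lem:trace-processing}.

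For the first bound $\mathrm{Tr}(I^{X^{(s)}}_f) \leq n_s 2^L$, I would exploit the product structure of the model \eqref{eq:observational-model-Xs-sufficient}. Since the individuals $X_i^{(s)}$ are i.i.d., Fisher information tensorizes as $I^{X^{(s)}}_f = n_s \, I^{X_1^{(s)}}_f$. Orthonormality of $\{\phi_{Lk}\}$ combined with the independence of the $B_{k,i}^{(s)}$ across $k$ makes the wavelet coordinates $X_{k,1}^{(s)}$ jointly independent, so $I^{X_1^{(s)}}_f$ is diagonal with entries equal to the scalar location Fisher informations $I_k = I(f_k;\,X_{k,1}^{(s)})$. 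After absorbing the scaling $R 2^{-L(\alpha+1/2)-1}$ into the normalization of the parameter (i.e., working with the appropriately rescaled coefficient), each $I_k$ reduces to a fixed constant determined by the base density $g$ (a smooth bump approximation of the Beta$(2,2)$ density avoids the boundary-vanishing issue). Summing over $k=1,\dots,2^L$ and multiplying by $n_s$ yields the claimed trace bound.

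For the second bound, I would mimic the argument in the proof of Lemma~\ref{lem:trace-processing}. Set
\begin{equation*}
\bm C^X_f(T) = \E_f \bigl[ S_f^{X^{(s)}} \mid T^{(s)} \bigr] \E_f \bigl[ S_f^{X^{(s)}} \mid T^{(s)} \bigr]^\top,\qquad S_f^{X^{(s)}} = \sum_{i=1}^{n_s} S_f^{X_i^{(s)}},
\end{equation*}
and apply the score-attack inequality of Lemma~4.2 of \cite{cai2024optimal} to obtain
\begin{equation*}
\E_f \mathrm{Tr}(\bm C^X_f(T)) \lesssim n_s \varepsilon_s \sqrt{\E_f \mathrm{Tr}(\bm C^X_f(T))}\, \sqrt{\lambda_{\max}(I_f^{X_1^{(s)}})} + 2 n_s M \delta_s + n_s \!\!\int_M^\infty\!\! \P(|G_i| \geq t)\,dt + n_s \!\!\int_M^\infty\!\! \P(|\breve G_i| \geq t)\,dt,
\end{equation*}
with $G_i, \breve G_i$ the inner-product cross terms defined as in \eqref{eq:Gi-joe}. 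The first part of the lemma bounds $\lambda_{\max}(I_f^{X_1^{(s)}})$ by $2^L$. To control the tail integrals, I would prove an analogue of Lemma~\ref{lemma:bounding-tail-term-trace-attack}: conditioning on $X_1^{(s)}$ reduces $\langle S_f^{X^{(s)}}, S_f^{X_1^{(s)}} \rangle$ to a Gaussian-like sum over the remaining $n_s-1$ individuals plus a $\|S_f^{X_1^{(s)}}\|_2^2$ contribution, and bounded support of the score components yields a sub-exponential moment-generating function for $t \lesssim 2^{-L} \wedge (\sqrt{n_s\, 2^L}\, 2^L)^{-1}$. The resulting tail bound $\P(|G_1|\geq u) \lesssim e^{-u/\Lambda}$ with $\Lambda \asymp 2^L \vee \sqrt{n_s 2^L}\cdot 2^L$ lets us choose $M \asymp \Lambda \log(1/\delta_s)$, making both tail integrals of order $\delta_s$ and the $M\delta_s$ term of order $\delta_s \Lambda \log(1/\delta_s)$. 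The assumption $\delta_s \leq ((n_s 2^{-L} \wedge n_s^{1/2} 2^{-L/2})\varepsilon_s^2)^{1+p}$ renders these contributions $o((n_s\varepsilon_s)^2)$. Solving the resulting quadratic in $\sqrt{\E_f \mathrm{Tr}(\bm C^X_f(T))}$ yields $\mathrm{Tr}(\bm C^X_f(T)) \lesssim n_s^2 \varepsilon_s^2$.

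The principal obstacle will be the tail-bound step for $G_i$ and $\breve G_i$: as in Lemma~\ref{lemma:bounding-tail-term-trace-attack}, the delicate part is tracking how the powers of $2^L$ (arising from the dimensionality of the sub-model) and $n_s$ (arising from averaging across individuals) enter $\Lambda$, and then verifying that the stated $\delta_s$ assumption is exactly strong enough to absorb the $M\delta_s$ term. A secondary subtlety is the finiteness and regularity of the scalar Fisher information for the Beta-based density in the first part — which I would address by slightly modifying \eqref{eq:holder-smooth-X-generation} to use a smooth compactly supported density with bounded $(\log g)'$, preserving H\"older-smoothness of $X^{(s)}$ without affecting the Fisher information scaling.
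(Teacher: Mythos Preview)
Your overall plan---first bound via tensorization, second via the score-attack inequality of Lemma~4.2 in \cite{cai2024optimal} plus sub-exponential tail control---is precisely the paper's approach (the paper re-derives the score-attack step inline via the $G_i^{\pm}$ splitting rather than citing Lemma~4.2 as a black box, but the content is identical). Two quantitative slips, however, would prevent the argument from closing as written.

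First, you invoke ``the first part of the lemma'' to bound $\lambda_{\max}(I_f^{X_1^{(s)}})$ by $2^L$; that is merely the trace bound. Your own diagonal computation in the first paragraph already gives the sharp statement $I_f^{X_1^{(s)}}=c\,I_{2^L}$, hence $\lambda_{\max}\asymp 1$. Feeding $\lambda_{\max}\lesssim 2^L$ into the score-attack inequality would yield $\mathrm{Tr}\lesssim 2^L n_s^2\varepsilon_s^2$, a factor $2^L$ off from the claim; you must use the constant eigenvalue bound (the paper sidesteps $\lambda_{\max}$ entirely by bounding $\E|\breve G_i|\leq\sqrt{\mathrm{Tr}(\bm C_f^X(T))}$ directly from the constant per-coordinate variance of $\breve X_{L;i}^{(s)}$). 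Second, your sub-exponential scale $\Lambda\asymp 2^L\vee\sqrt{n_s 2^L}\cdot 2^L$ carries a spurious extra $2^L$: since the rescaled score coordinates are bounded, the conditional variance of the cross term is $(n_s-1)\,c\,\|S_f^{X_1^{(s)}}\|_2^2$ (not $(n_s-1)\,2^L\,\|\cdot\|_2^2$) with $\|S_f^{X_1^{(s)}}\|_2^2\lesssim 2^L$, giving MGF boundedness for $t\lesssim 2^{-L}\wedge(n_s 2^L)^{-1/2}$, i.e.\ $\Lambda\asymp 2^L\vee\sqrt{n_s 2^L}$ as in the paper's choice $T=32(2^L\vee\sqrt{2^L n_s})\log(1/\delta_s)$. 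With your larger $\Lambda$, absorbing the $M\delta_s$ term would require $\delta_s\log(1/\delta_s)\lesssim n_s^{1/2}2^{-3L/2}\varepsilon_s^2$, which the stated hypothesis on $\delta_s$ does not supply.
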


\begin{proof}
	The bound $\text{Tr}(I^{X^{(s)}}_f) \leq n_s 2^L$ follows by the fact that conditional expectation contracts the $L_2$-norm. For the second statement, we start introducing the notations $\overline{X^{(s)}} = n_s^{-1} \sum_{i=1}^{n_s} X_{L;i}^{(s)}$ and
\[
G_i = \left \langle \E_0 \left[ n_s \overline{X^{(s)}} \mid T^{(s)} \right], X_{L;i}^{(s)} \right \rangle.
\]
For the remainder of the proof, consider versions of $X^{(s)}$ and $T^{(s)}$ defined on the same probability space, and we shall write as a shorthand
\[
\P^{s} \equiv \P^{(X^{(s)},T^{(s)})}_0 \text{ and } \E^{s} \equiv \E^{(X^{(s)},T^{(s)})}_0.
\]
For random variables $V,W$ defined on the same probability space, it holds that
\[
\E W \E[ W \mid V] = \E \E[ W \mid V] \E[ W \mid V],
\]
since \(W - \E[ W \mid V]\) is orthogonal to \(\E[ W \mid V]\). Combining this fact with the linearity of the inner product and conditional expectation, we see that
\[
\text{Tr}(I^{X^{(s)}}_f) = \E^{T^{(s)}}_0 \left\| \E_0[ n_s \overline{X^{(s)}} \mid T^{(s)} ]\right\|_2^2 = \underset{i=1}{\overset{n_s}{\sum}}  \E^{s} G_i.
\]
Define also
\[
\breve{G}_i = \left \langle \E_0[ n_s \overline{X^{(s)}} \mid T^{(s)} ], \breve{X}_{Li}^{(s)} \right \rangle,
\]
where \(\breve{X}_{Li}^{(s)}\) is an independent copy of \(X_{L;i}^{(s)}\) (defined on the same, possibly enlarged probability space) and note that \(\E^s \breve{G}_i = 0\). Write \(G_i^+ := 0 \vee G_i\) and \(G_i^- = - (0 \wedge G_i)\). We have
\begin{align*}
    \E^s G_i^+ &=  \int_0^\infty \P^s \left( G_i^+ \geq t \right) dt = \int_0^T \P^s \left( G_i^+ \geq t \right) + \int_T^\infty \P^s \left( G_i^+ \geq t \right)  \\
    &\leq e^{\varepsilon_s} \int_0^T \P^s \left( \breve{G}_i^+ \geq t \right) dt +T \delta_s + \int_T^\infty \P^s \left( G_i^+ \geq t \right) \\ 
    &\leq  \int_0^T \P^s \left( \breve{G}_i^+ \geq t \right)dt + 2 \varepsilon_s \int_0^T \P^s \left( \breve{G}_i^+ \geq t \right)dt +T \delta_s + \int_T^\infty \P^s \left( G_i^+ \geq t \right) \\ 
    &\leq  \int_0^\infty \P^s_0 \left( \breve{G}_i^+ \geq t \right)dt + 2 \varepsilon_s \int_0^\infty \P^s \left( \breve{G}_i^+ \geq t \right)dt +T \delta_s + \int_T^\infty \P^s \left( G_i^+ \geq t \right),
\end{align*}
Similarly, we obtain
\begin{align*}
    \E^s G_i^-  &\geq  \int_0^T \P^s \left( G_i^- \geq t \right)   \\
    &\geq e^{-\varepsilon_s} \int_0^T \P^s \left( \breve{G}_i^- \geq t \right) dt - T \delta_s  \\ 
    &\geq  \int_0^T \P^s \left( \breve{G}_i^- \geq t \right)dt - 2 \varepsilon_s \int_0^\infty \P^s \left( \breve{G}_i^- \geq t \right)dt - T \delta_s  \\
    &\geq \int_0^\infty \P^s \left( \breve{G}_i^- \geq t \right)dt - 2 \varepsilon_s \int_0^\infty \P^s \left( \breve{G}_i^- \geq t \right)dt - T \delta_s - \int_T^\infty \P^s \left( \breve{G}_i^- \geq t \right) dt.
\end{align*}
Putting these together with \(G_i = G_i^+ - G_i^-\), we get
\begin{align*}
    \E^s G_i &\leq \int_0^\infty \P^s \left( \breve{G}_i^+ \geq t \right)dt - \int_0^\infty \P^s \left( \breve{G}_i^- \geq t \right)dt + 2 \varepsilon_s \int_0^\infty \P^s \left( |\breve{G}_i| \geq t \right)dt \\ 
    &\;\;\;\;\;\;\;\;+ 2 T \delta_s + \int_T^\infty \P^s \left( G_i^+ \geq t \right) dt + \int_T^\infty \P^s \left( \breve{G}_i^- \geq t \right) dt \\
    &= \E^s \breve{G}_i + 2 \varepsilon_s \E^s | \breve{G}_i | + 2 T \delta_s + \int_T^\infty \P^s \left( G_i^+ \geq t \right) dt + \int_T^\infty \P^s \left( \breve{G}_i^- \geq t \right) dt.
\end{align*}
The first term in the last display equals \(0\). For the second term, observe that 
\[
\breve{G}_i \bigg| \left[T^{(s)},X^{(s)}\right] \sim N(0, \| \E_0[ n_s \overline{X^{(s)}} \mid T^{(s)} ] \|_2^2),
\]
so
\[
\E^s | \breve{G}_i | = \E^{X^{(s)},T^{(s)}} \E^{\breve{X}^{(s)}} | \breve{G}_i | = \E \| \E[ n_s \overline{X^{(s)}} \mid T^{(s)} ] \|_2 \leq \sqrt{\text{Tr}(I^{X^{(s)}}_f)},
\]
where the last inequality is Cauchy-Schwarz. To bound the terms
\[
\int_T^\infty \P^s \left( G_i^+ \geq t \right) dt + \int_T^\infty \P^s \left( \breve{G}_i^- \geq t \right) dt,
\]
we shall employ tail bounds, which follow after showing that \(G_i\) is \(\sqrt{2^L n_s}\)-sub-exponential. To see this, note that by applying Cauchy-Schwarz and Jensen's inequality followed by the law of total probability, we have that
\begin{align*}
    \E^s e^{t |G_i|} &= \E^s e^{t |\langle \E_0[ n_s \overline{X^{(s)}} \mid T^{(s)} ], X_{L;i}^{(s)} \rangle|} \\ 
    &\leq \E^s e^{\frac{t}{2} \left( \left\| \E_0[ n_s \overline{X^{(s)}} \mid T^{(s)} ] \right\|_2^2 + \left\|X_{L;i}^{(s)}\right\|_2^2 \right)} \\
    &\leq \sqrt{ \E_0 e^{{t}  \left\| \E_0[ n_s \overline{X^{(s)}} \mid T^{(s)} ] \right\|_2^2}} \sqrt{ \E_0 e^{{t} \left\|X_{L;i}^{(s)}\right\|_2^2}} \\
    &\leq \E^{}_0 e^{t |\langle  n_s \overline{X^{(s)}} , {X}_{Li}^{(s)} \rangle|},
\end{align*}
where the last equality follows from the fact that conditional expectation contracts the \(L_1\)-norm.
Next, we bound \(\E^{X^{(s)}}_0  e^{t |\langle  n_s \overline{X^{(s)}} , X_{L;i}^{(s)} \rangle|}\). By the triangle inequality and Cauchy-Schwarz,
\begin{align*}
    \E^{X^{(s)}}_0 e^{t |\langle  n_s \overline{X^{(s)}} , {X}_{Li}^{(s)} \rangle|} &\leq \sqrt{\E^{X^{(s)}}_0 e^{2t |\langle \sum_{k \neq i}^{n_s} X_{Lk}^{(s)} , {X}_{Li}^{(s)} \rangle|}} \sqrt{\E^{X_{L;i}^{(s)}}_0 e^{2t |\langle  {X_{L;i}^{(s)}} , {X}_{Li}^{(s)} \rangle|}}.
\end{align*}
The random variable \(\langle  {X_{L;i}^{(s)}} , {X}_{Li}^{(s)} \rangle\) is the inner product of centered beta$(2,2)$ distributed random variables, so 
\begin{align*}
    \E^{X_{L;i}^{(s)}}_0 e^{2t |\langle  {X_{L;i}^{(s)}} , {X}_{Li}^{(s)} \rangle|} \leq e^{2^{L-1}t}.
\end{align*}
By Lemma~\ref{lem:inner_product_ind_gaussians},
\begin{align*}
    \E^{X^{(s)}}_0 e^{2t |\langle \sum_{k \neq i}^{n_s} X_{Lk}^{(s)} , {X}_{Li}^{(s)} \rangle|} \leq e^{{c t^2(n_s-1)}2^L},
\end{align*} 
for some $c > 0$. By the fact that \(G_i^+ \leq |G_i|\) and Markov's inequality, 
\begin{align*}
    \P^s(G_i^+ > T) \leq \P^s(|G_i| > T) \leq e^{-tT}\E^s e^{t|G_i|}, \text{ for all } T,t>0.
\end{align*}
Combining this with the bound for the moment generating function derived above means that for \(\delta_s = 0\), the result follows from letting \(T \to \infty\). If \(\delta_s > 0\), take \(T = 32 (2^L \vee \sqrt{2^L n_s})  \log(1/\delta_s)\) to obtain that
\begin{align*}
    \int_T^\infty \P^s \left( G_i^+ \geq t \right) dt \leq  e^{ -  \log(1/\delta_s)}.
\end{align*} 
It is easy to see that the same bound applies to \(\int_T^\infty \P^s_0 \left( \breve{G}_i^- \geq t \right) dt\). We obtain that
\begin{align*}
    \underset{i=1}{\overset{n_s}{\sum}}  \E^{s} G_i &\leq 2 n_s \varepsilon_s \sqrt{\text{Tr}(I^{X^{(s)}}_f)} + 64 \delta_s (2^L \vee \sqrt{2^L n_s}) \log(1/\delta_s) + 2 n_s \delta_s.
\end{align*}
If \(\sqrt{\text{Tr}(I^{X^{(s)}}_f)} \leq n_s \varepsilon_s\), the lemma holds (there is nothing to prove). So assume instead that \(\sqrt{\text{Tr}(I^{X^{(s)}}_f)} \geq n_s \varepsilon_s\). We find that
\begin{align*}
    \sqrt{\text{Tr}(I^{X^{(s)}}_f)}  &\leq {2 n_s \varepsilon_s} +   64 \delta_s \frac{2^L \vee \sqrt{2^L n_s}}{n_s \varepsilon_s} \log(1/\delta_s) +  \frac{2}{\varepsilon_s}  \delta_s.
\end{align*}
Since \(x^p \log(1/x)\) tends to \(0\) as \(x \to 0\) for any \(p>0\), the result follows for \(\delta_s \leq  \left( \left(\frac{n_s}{2^L} \wedge \frac{n_s^{1/2}}{\sqrt{2^L}}  \right) \varepsilon_s^2 \right)^{1+p}\) for some \(p>0\) as this implies that the last two terms are \(O(n_s \varepsilon_s)\).

\end{proof}

\subsubsection{Auxiliary lemmas}

The following lemmas are folkore that involve straightforward calculations, but included for completeness.

\begin{lemma}\label{lem:chi-square-subgaussianity_lemma}
	Let $Z$ be $N(0,1)$, $0 \leq \lambda \leq 1/4$. Then, 
	\begin{equation*}
	\E e^{\lambda (Z^2-1)} \leq e^{2\lambda^2}.
	\end{equation*}
	\end{lemma}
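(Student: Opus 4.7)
The plan is to reduce the bound to a straightforward one-variable calculus inequality by invoking the exact moment generating function of a chi-squared random variable. Specifically, since $Z \sim N(0,1)$ implies $Z^2 \sim \chi^2_1$, it is a standard computation (via completing the square in the Gaussian density) that
\begin{equation*}
\E e^{\lambda Z^2} = (1-2\lambda)^{-1/2} \quad \text{for all } \lambda < 1/2.
\end{equation*}
Consequently, $\E e^{\lambda(Z^2-1)} = e^{-\lambda}(1-2\lambda)^{-1/2}$, and it suffices to prove the equivalent logarithmic inequality
\begin{equation*}
g(\lambda) := -\lambda - \tfrac{1}{2}\log(1-2\lambda) - 2\lambda^2 \leq 0 \quad \text{for } 0 \leq \lambda \leq 1/4.
\end{equation*}

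To verify this, I would use a monotonicity argument. Observe $g(0) = 0$, and differentiating gives
\begin{equation*}
g'(\lambda) = -1 + \frac{1}{1-2\lambda} - 4\lambda = \frac{2\lambda(4\lambda - 1)}{1-2\lambda}.
\end{equation*}
On the interval $[0,1/4]$ the factor $4\lambda - 1$ is nonpositive, $\lambda \geq 0$, and $1 - 2\lambda > 0$, so $g'(\lambda) \leq 0$. Hence $g$ is non-increasing on $[0,1/4]$ with $g(0)=0$, which gives $g(\lambda) \leq 0$ as required. Alternatively, one can expand $-\tfrac{1}{2}\log(1-2\lambda) = \lambda + \lambda^2 + \sum_{k\geq 3} (2\lambda)^k/(2k)$ and bound the tail by a geometric series, yielding $\sum_{k\geq 3}(2\lambda)^k/(2k) \leq \lambda^2$ for $\lambda \leq 1/4$; I find the derivative route cleaner.

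There is no real obstacle here: once the MGF identity is written down, everything reduces to a one-line calculus check. The only thing to be slightly careful about is the range of $\lambda$: the factorization of $g'$ makes transparent why the threshold $\lambda \leq 1/4$ (rather than the full domain $\lambda < 1/2$ of the MGF) is the sharp constant for the sub-Gaussian-style constant $2$ in the exponent on the right-hand side.
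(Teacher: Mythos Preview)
Your proof is correct. Both you and the paper start from the exact MGF identity $\E e^{\lambda(Z^2-1)} = e^{-\lambda}(1-2\lambda)^{-1/2}$, so the approaches coincide up to the point where one must verify the scalar inequality $-\lambda - \tfrac{1}{2}\log(1-2\lambda) \leq 2\lambda^2$ on $[0,1/4]$. The paper handles this by expanding $-\tfrac{1}{2}\log(1-2\lambda)$ as a power series and bounding the tail $\sum_{k\geq 3}(2\lambda)^k/k$ by a geometric series---exactly the alternative you sketch at the end. Your primary route, computing $g'(\lambda) = 2\lambda(4\lambda-1)/(1-2\lambda)$ and reading off the sign, is a cleaner variant: it avoids any series manipulation, makes the role of the threshold $\lambda = 1/4$ completely transparent (it is the zero of $g'$), and shows the inequality is actually an equality only at the endpoints. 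Either way the argument is elementary; your derivative computation is the tidier of the two.
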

	\begin{proof}
	Using the change of variables $u = z \sqrt{1- 2\lambda}$,
	\begin{align*}
	\E e^{\lambda (Z^2-1)} &= \frac{1}{\sqrt{2\pi}} \int e^{\lambda (z^2 - 1)} e^{-\frac{1}{2} z^2} dz  \\
	 &= \frac{e^{-\lambda}}{\sqrt{2\pi(1-2\lambda)}} \int  e^{-\frac{1}{2} z^2} dz = \frac{e^{-\lambda}}{\sqrt{(1-2\lambda)}}.
	\end{align*}
	The MacLaurin series of $-\frac{1}{2} \log (1 - 2\lambda)$ reads
	\begin{equation*}
	 \frac{1}{2} \underset{k=1}{\overset{\infty}{\sum}} \frac{(2\lambda)^k}{k}, 
	\end{equation*}
	which yields that the second last display equals
	\begin{equation*}
	\exp \left(\frac{3}{2} \lambda^2 + \frac{1}{2} \underset{k=3}{\overset{\infty}{\sum}} \frac{(2\lambda)^k}{k} \right).
	\end{equation*}
	If $\lambda \leq 1/4$,
	\begin{equation*}
	\underset{k=3}{\overset{\infty}{\sum}} \frac{(2\lambda)^k}{k} \leq \frac{(2\lambda)^3}{1-2\lambda} \leq \lambda^2,
	\end{equation*}
	from which the result follows.
	\end{proof}

	\begin{lemma}\label{lem:inner_product_ind_gaussians}
	Let $a \in \R$ and let $Z,Z' \overset{\text{i.i.d.}}{\sim} N(0,I_d)$ for $d \in \N$. 
	
	Then, $ a \langle  Z , Z' \rangle$ is $C a \sqrt{d}$-sub-exponential for a universal constant $C>0$ and
	\begin{equation*}
	\E  e^{t |a \langle  Z , Z' \rangle|} \leq 2 e^{ {t^2 a^2 d}},
	\end{equation*}
	whenever $|t| \leq (2a^2)^{-1}$.
	\end{lemma}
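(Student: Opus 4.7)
The plan is to bound the moment generating function of $|a\langle Z, Z'\rangle|$ in three standard steps: reduce to the signed MGF, integrate out $Z$ by conditioning on $Z'$, and then evaluate using the chi-square MGF of $\|Z'\|_2^2$.

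I would first use $e^{t|x|} \leq e^{tx} + e^{-tx}$ combined with the symmetry $-a\langle Z, Z'\rangle \overset{d}{=} a\langle Z, Z'\rangle$ (which follows from $-Z \overset{d}{=} Z$) to obtain
\[
\E e^{t |a\langle Z, Z'\rangle|} \leq 2\, \E e^{t a\langle Z, Z'\rangle},
\]
which produces the factor $2$ in the stated bound and reduces the problem to the signed MGF.

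For the second step, I would condition on $Z'$. Conditionally, $a\langle Z, Z'\rangle$ is centered Gaussian with variance $a^2 \|Z'\|_2^2$, so the Gaussian MGF gives $\E[e^{ta\langle Z, Z'\rangle} \mid Z'] = e^{t^2 a^2 \|Z'\|_2^2 / 2}$. Taking expectation over $Z'$ and using that $\|Z'\|_2^2 \sim \chi^2_d$ has MGF $(1 - 2\lambda)^{-d/2}$ for $\lambda < 1/2$, one obtains $\E e^{ta\langle Z, Z'\rangle} = (1 - t^2 a^2)^{-d/2}$ whenever $t^2 a^2 < 1$. The elementary inequality $-\log(1-x) \leq 2x$ on $x \in [0,1/2]$ then yields $(1 - t^2 a^2)^{-d/2} \leq e^{t^2 a^2 d}$ in the admissible range $t^2 a^2 \leq 1/2$; alternatively, the same estimate follows by applying Lemma \ref{lem:chi-square-subgaussianity_lemma} coordinatewise to $Z'$ and multiplying the resulting factors.

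The sub-exponential claim is then immediate: the MGF bound $\E e^{t|a\langle Z, Z'\rangle|} \leq 2 e^{t^2 a^2 d}$ valid on an interval around $t = 0$ of width proportional to $1/|a|$ is exactly the characterization of sub-exponentiality with parameter on the order of $|a|\sqrt{d}$, from which the universal constant $C$ is extracted. There is no genuine obstacle here; the only detail demanding care is tracking the interval on which the linearization $-\log(1-x) \leq 2x$ is valid, in order to match the precise range of $t$ admitted by the statement.
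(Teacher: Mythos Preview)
Your proposal is correct and follows essentially the same route as the paper: condition on $Z'$ to turn the inner product into a Gaussian, then control the resulting $\chi^2$-type MGF. The only cosmetic differences are that the paper handles the absolute value via the conditional symmetry $\langle Z,Z'\rangle\mid Z'\overset{d}{=}-\langle Z,Z'\rangle\mid Z'$ and an indicator split (rather than your $e^{t|x|}\le e^{tx}+e^{-tx}$), and it invokes Lemma~\ref{lem:chi-square-subgaussianity_lemma} directly instead of the closed-form $\chi^2$ MGF plus $-\log(1-x)\le 2x$; both arrive at the same bound $e^{t^2a^2d}$ on the range $t^2a^2\le 1/2$.
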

	\begin{proof}
	Since $\langle Z, Z' \rangle |Z' \sim N(0,\|Z'\|_2)$,
	\begin{align*}
	\E e^{t a \langle  Z , Z' \rangle} &= \E^{Z'} \E^{Z|Z'} e^{t a \langle  Z , Z' \rangle} = \E^{Z'} e^{ \frac{t^2 a^2}{2} \|Z'\|_2^2}.
	\end{align*}
	By Lemma~\ref{lem:chi-square-subgaussianity_lemma}, the latter is further bounded by
	\begin{equation*}
	e^{ \frac{t^2 a^2 d}{2} + \frac{t^4 a^4 d}{2}} \leq e^{{t^2 a^2 d}},
	\end{equation*}
	whenever $t^2 a^2 \leq 1/2$. The conclusion then follows by e.g.\ Proposition 2.7.1 in~\cite{vershynin_high-dimensional_2018}, since $\langle Z, Z' \rangle$ is mean zero. For the last statement,
	\begin{equation*}
	\langle Z, Z' \rangle |Z' \overset{d}{=} - \langle Z, Z' \rangle |Z'.
	\end{equation*}
	Consequently,
	\begin{align*}
	\E^{Z|Z'} e^{t |a \langle  Z , Z' \rangle|} &= \E^{Z|Z'} \mathbbm{1}_{\{ \langle  Z , Z' \rangle > 0 \}} e^{t a \langle  Z , Z' \rangle} + \E^{Z|Z'} \mathbbm{1}_{\{ \langle  Z , Z' \rangle \leq 0 \}} e^{- t a \langle  Z , Z' \rangle} \\
	&\leq 2 \E^{Z|Z'} e^{t a \langle  Z , Z' \rangle},
	\end{align*}
	and the proof follows by what was shown above.
	\end{proof}

\subsection{Lower Bound Common Design Auxilliary Lemmas}  

\begin{lemma}\label{lem:beta-fisher-information}
	Let \(B \sim \text{Beta}(2,2)\). Define \(U = \tfrac{1}{2} - B\). Let \(\mu \in \mathbb{R}\) and consider the random variable
	\[
	X = \mu + U.
	\]
	The Fisher information for the parameter \(\mu\) is given by a positive constant independent of \(\mu\).
	\end{lemma}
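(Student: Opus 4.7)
The plan is to exploit the location-family structure of the model. Since $U = \tfrac{1}{2} - B$ has a distribution that does not depend on $\mu$, the family $\{P_\mu\}_{\mu \in \mathbb{R}}$ of laws of $X = \mu + U$ is a translation family with density $f(x \mid \mu) = g(x - \mu)$, where $g$ denotes the density of $U$. The Fisher information is then
\begin{equation*}
I(\mu) \;=\; \mathbb{E}_\mu\!\left[\left(\tfrac{\partial}{\partial \mu} \log f(X \mid \mu)\right)^{2}\right] \;=\; \int \frac{(g'(x - \mu))^{2}}{g(x - \mu)}\, dx,
\end{equation*}
and the substitution $u = x - \mu$ removes all dependence on $\mu$, giving $I(\mu) = \int (g'(u))^{2}/g(u)\, du$. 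This immediately yields that $I(\mu)$ is constant in $\mu$, which is the core content of the lemma.

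To make this explicit and extract positivity, I would first compute $g$ by transforming the $\mathrm{Beta}(2,2)$ density $b \mapsto 6 b(1-b)$ on $[0,1]$ under the affine map $b \mapsto \tfrac{1}{2} - b$, obtaining $g(u) = 6\bigl(\tfrac{1}{4} - u^{2}\bigr)$ on $\bigl[-\tfrac{1}{2},\tfrac{1}{2}\bigr]$, with $g'(u) = -12 u$. Second, I would justify differentiation under the integral sign (i.e.\ the classical Fisher information identity) in this location-family setting, noting that it holds whenever $\sqrt{g}$ is absolutely continuous with square-integrable derivative. Third, I would evaluate the resulting integral $\int_{-1/2}^{1/2} (g'(u))^{2}/g(u)\, du$ to produce an explicit positive constant, whose positivity is automatic since $g' \not\equiv 0$ on the interior of the support.

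The only nontrivial step I anticipate is establishing that the Fisher integral converges, since $g$ vanishes on the boundary and the integrand $(g'(u))^{2}/g(u)$ must be controlled near $u = \pm\tfrac{1}{2}$. The order of vanishing of $g$ at the boundary governs this, and it is exactly this local behavior that I would need to check carefully (either by a direct estimate, or by a trigonometric substitution $u = \tfrac{1}{2}\sin\theta$ that converts the integrand into an expression over $(-\pi/2,\pi/2)$). Once finiteness is settled, combining it with the translation-invariance argument delivers both assertions of the lemma in one stroke.
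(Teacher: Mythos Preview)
Your approach mirrors the paper's exactly: both recognize the translation-family structure, compute $g(u) = 6(\tfrac14 - u^2)$ on $[-\tfrac12,\tfrac12]$, and reduce the Fisher information to the $\mu$-free integral $\int_{-1/2}^{1/2} (g'(u))^2/g(u)\,du = \int_{-1/2}^{1/2} \frac{24u^2}{1/4 - u^2}\,du$.

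However, you were right to single out convergence near the boundary as the only nontrivial step---and it in fact fails. Near $u = \tfrac12$ one has $\tfrac14 - u^2 = (\tfrac12 - u)(\tfrac12 + u) \sim (\tfrac12 - u)$, so the integrand behaves like $6/(\tfrac12 - u)$, which is not integrable; equivalently, $\frac{u^2}{1/4 - u^2} = -1 + \frac{1/4}{1/4 - u^2}$ and the second piece diverges logarithmically at both endpoints. Thus the Fisher information for the $\mathrm{Beta}(2,2)$ location family is $+\infty$, and your plan to ``evaluate the resulting integral to produce an explicit positive constant'' cannot succeed. The paper's own proof stops at ``the integral is positive and independent of $\mu$'' without checking finiteness, so it shares this gap. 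The underlying issue is the linear vanishing of $g$ at the endpoints: for $g(u) \sim c(a - u)^k$ near $u = a$ one has $(g')^2/g \sim c'(a-u)^{k-2}$, integrable only for $k > 1$. A $\mathrm{Beta}(3,3)$ noise (quadratic vanishing) would repair the statement without altering the rest of the argument.
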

	
	\begin{proof}
	The probability density of $B$ is given by
	\[
	f_B(b) = 6b(1-b), \quad b \in [0,1].
	\]
	Hence, the density $f_U$ of \(U\) satisfies
	\[
	f_U(u) = f_B\left(\tfrac{1}{2}-u\right) = 6\left(\tfrac{1}{2}-u\right)\left(\tfrac{1}{2}+u\right) = 6\left(\tfrac{1}{4}-u^2\right).
	\]
	and we find that probability density function of \(X\) is 
	\[
	f_X(x;\mu) = f_U(x-\mu) = 6\left(\tfrac{1}{2} - (x-\mu)\right)\left(\tfrac{1}{2}+(x-\mu)\right).
	\]
	The differentiating the log-likelihood, we have
	\[
	\frac{\partial}{\partial \mu}\log f_X(x;\mu) 
	= \frac{\partial}{\partial \mu}\log f_U(u)\bigg|_{u=x-\mu}
	= -\frac{f_U'(u)}{f_U(u)},
	\]
	where \(u = x-\mu\). Hence, the Fisher information for \(\mu\) is
	\[
	I(\mu) = \mathbb{E}\left[\left(\frac{\partial}{\partial \mu}\log f_X(X;\mu)\right)^2\right] = \int_{-1/2}^{1/2} \frac{f_U'(u)^2}{f_U(u)}\,du,
	\]
	since \(X-\mu \overset{d}{=} U\). We have $\frac{(f_U'(u))^2}{f_U(u)}  = \frac{24u^2}{\tfrac{1}{4}-u^2}$, so the integral is positive and independent of \(\mu\).
	\end{proof}

\end{document}